\documentclass[a4paper,11pt]{article}
\usepackage[a4 paper, margin=2.5cm]{geometry}


\setlength{\parindent}{0pt}

\usepackage[normalem]{ulem}

\usepackage{hyperref}

\usepackage{amsmath,amsfonts,amsthm,amssymb}
\usepackage[normalem]{ulem}
 \usepackage{soul}
\usepackage{cite}
\usepackage{color,enumitem,graphicx}

\usepackage[english]{babel}
\usepackage[dvipsnames]{xcolor}

\newcommand\myshade{85}
\colorlet{mylinkcolor}{violet}
\colorlet{mycitecolor}{YellowOrange}
\colorlet{myurlcolor}{Aquamarine}

\hypersetup{
	linkcolor  = mylinkcolor!\myshade!black,
	citecolor  = mycitecolor!\myshade!black,
	urlcolor   = myurlcolor!\myshade!black,
	colorlinks = true,
}






\usepackage[T1]{fontenc}
\usepackage{lmodern}



\usepackage{fixmath}


%


\numberwithin{equation}{section}

\newtheorem{theorem}{Theorem}[section]
\newtheorem{lemma}[theorem]{Lemma}
\newtheorem{proposition}[theorem]{Proposition}

\theoremstyle{definition}
\newtheorem{definition}[theorem]{Definition}

\newtheorem{notat}[theorem]{Notation}

\theoremstyle{remark}
\newtheorem{remark}[theorem]{Remark}



\newcommand{\R}{\mathbb{R}}
\newcommand{\C}{\mathbb{C}}




\newcommand{\cD}{{\mathcal D}}

\newcommand{\N}{\mathbb N}

\DeclareMathOperator{\esssup}{ess\, sup}

\DeclareOldFontCommand{\bf}{\normalfont\bfseries}{\mathbf}
\DeclareMathOperator{\codim}{codim}

\numberwithin{equation}{section}

\DeclareMathOperator*{\supp}{supp}

\DeclareSymbolFont{rsfs}{U}{rsfs}{m}{n}
\DeclareSymbolFontAlphabet{\mathscr}{rsfs}




 
\def\D{{\nabla}}
\def\e{{\varepsilon}}
   
\newcommand{\beq}{\begin{equation}}
\newcommand{\eeq}{\end{equation}}

\DeclareMathOperator*{\const}{const}
\DeclareMathOperator*{\spann}{span}


\begin{document}

\title{Normalized Schr\"odinger equations with mass-supercritical nonlinearity in exterior domains}

\author{Luigi Appolloni
\\
{\small \textit{University of Leeds, School of Mathematics}}
\\
{\small \textit{Woodhouse, Leeds LS2 9JT, United Kingdom}}
\\
{\small\texttt{l.appolloni@leeds.ac.uk}}
\\ \\
Riccardo Molle
\\
{\small \textit{Dipartimento di Matematica, Universit\`a di Roma ``Tor Vergata''}}
\\
{\small\textit{Via della Ricerca Scientifica n. 1, 00133 Roma, Italy}}
\\
{\small \texttt{molle@mat.uniroma2.it}}
}
 
 
\maketitle


\begin{abstract}
We consider the problem $-\Delta u+\lambda u=u^{p-1}$, where $u\in H^1_0(\Omega)$ verifies $\|u\|_{L^2}=m>0$, and $\lambda\in [0,+\infty)$. 
Here, $\R^N\setminus\Omega$ is nonempty and compact.
We prove the existence of a solution with a constrained Morse index lower than or equal to $N+1$, both in the case $m$ fixed and $\R^N\setminus\Omega$ in a small ball and in the case $\Omega$ fixed and $m$ large.
\end{abstract}


  \section{Introduction}


In this paper, we investigate the existence of weak solutions for 
\begin{equation} \label{Pm}
\tag{$P_m$}
    \begin{cases}
-\Delta u +\lambda u = |u|^{p-2}u & \text{in} \ \Omega \\
\lambda\in\R,\quad   u\in S_m:=\{u\in H^1_0(\Omega)\ :\  
\Vert u \Vert_{L^2} = m \}
    \end{cases}
\end{equation}
where $\Omega \subset \R^N $ is an exterior domain, i.e. there exists a $\varrho>0$, such that $\R^N \setminus \Omega \subset B_\varrho(0)$,  $N \geq 3$,  $ 2 + \frac{4}{N}<p<\tfrac{2N}{N-2}$, and $m>0$.

\smallskip

It is classical that problem \eqref{Pm} comes out studying the nonlinear homogeneous model of the  Schr\"odinger equation
\beq
\label{SchEq}
i \Phi_t+\Delta\Phi+f(\Phi)=0,\quad \Phi:\R^N\times [0,+\infty)\to \C,\quad \Phi(\cdot ,0)= u_0,
\eeq
in the stationary case, that is looking for the standing waves solutions of the form
\beq
\label{ansatz}
\Phi(x,t)=e^{i\lambda t}u(x),\qquad t\ge 0,\ u:\R^N\to\R.
\eeq
Since for the time-dependent solutions $\Phi(x,t)$ of \eqref{SchEq} the {\em mass} $\int |\Phi(x,t)|^2dx$ is preserved, as $t\in [0,+\infty)$, it is natural to fix in \eqref{ansatz} the quantity $\int u^2dx=m^2$, where $m>0$ is related to the initial condition.
In this framework, the frequency $\lambda$ in  \eqref{ansatz} appears as an unknown in problem \eqref{Pm}, and corresponds to the Lagrange multiplier arising in the variational analysis of the problem under the mass-constraint, namely finding solutions as critical points of the energy functional
$$
    I  (u):=\frac{1}{2} \int_{\R^N}  | \nabla u |^2 \, dx - \frac{1}{p} \int_{\R^N}  | u |^p \, dx 
$$ 
constrained on
\begin{equation*}
    \widetilde{S}_m:= \left\lbrace u \in {H^1(\R^N) \ :\ } \int_{ \R^N} |u|^2 \, dx = m^2 \right\rbrace.
\end{equation*}
We refer to \cite{BeLi83ARMA1,Ca03book,CaLi82CMP,MR4304693, MRV22JDE}  and the references therein for further physical motivations. 

\smallskip

The existence of solutions with prescribed $L^2$ norm, also referred to in the literature as \textit{normalized solutions}, has been extensively investigated in the last few decades in the whole Euclidean space. Following the variational approach proposed above, it is easy to see that the so-called \textit{mass critical} exponent $2+\tfrac{4}{N}$ plays an essential role in characterizing the problem.  
Indeed, in the \textit{mass sub-critical case}, which corresponds to $2<p<2+\tfrac{4}{N}$, the functional is bounded from below by the Gagliardo-Nirenberg inequality, and the problem can be tackled by minimization (see, for instance, the seminal work \cite{MR0778974}).

 In the \textit{mass-supercritical} case $2+\tfrac4N<p<\tfrac{2N}{N-2}$ the functional is unbounded from below, and even if one is able to prove the existence of Palais-Smale sequences, it is not clear in general whether they are bounded. To overcome these difficulties, in the pioneering work \cite{Je97NA}, Jeanjean, studying problem \eqref{Pm} in $\mathbb{R}^N$ with a more general nonlinearity, introduced an approach relying on the Pohozaev identity and scaling arguments to construct particular bounded Palais-Smale sequences. Under suitable assumptions on the nonlinearity, he proved the existence of a mountain-pass type solution, i.e., a solution with constrained Morse index 1.  
Later on, Jeanjean's method was extended to problems with different nonlinearities (see f.i. \cite{MR4150876,MR4096725,MR4107073}, and references therein), on metric graphs \cite{MR4755505,MR3959930}, systems \cite{MR3895385,MR3539467,MR3547674}, non-autonomous equations \cite{LM_PRSE_23,MR4304693,IkMi20CalcVar},
and non-local operators \cite{MR4232664}.

However, in an exterior domain, this approach does not work. In fact, scaling arguments cannot be applied keeping the domain fixed, and the Pohozaev identity contains additional boundary terms that prevent constructing a specific Palais-Smale sequence as done in \cite{Je97NA}. 
Moreover, it turns out that the mountain-pass level coincides with that in $\mathbb{R}^N$, and it is straightforward to see that it cannot be attained. 
As a consequence, it is necessary to look for solutions at higher energy levels. A similar issue arises when studying problem \eqref{Pm} in $\mathbb{R}^N$ with a non-negative vanishing potential. 
To tackle this issue, the second author and alter in \cite{MR4304693} showed that the functional related to the problem exhibits a linking geometry that aligns with a solution of constrained Morse index $N+1$.
This linking structure assures a Palais-Smale sequence, but establishing its compactness is not straightforward.
To carry out the proof, the boundedness of the Palais-Smale sequence is first established, and then it is verified that the Lagrange multiplier $\lambda$ is positive. This allows the compactness analysis in the unconstrained case \cite{BC87ARMA} to be applied. 
A key tool, both for proving the boundedness of the Palais-Smale sequence and for ensuring $\lambda > 0$, was an ``almost Pohozaev'' identity, which, however, cannot be used when $\Omega$ is an exterior domain. 
Motivated by the study of the existence of normalized solutions on metric graphs in the mass-supercritical case, Borthwick, Chang, Jeanjean, and Soave generalized in \cite{MR4748624} the celebrated Struwe's monotonicity trick to the case of constrained functionals. 
As a result, they developed an alternative approach to study solutions with prescribed mass using a new technique to prove the existence of bounded Palais-Smale sequences that do not require the Pohozaev identity.  
This paper investigates the presence of solutions to problem \eqref{Pm}  employing this novel variational method.
To achieve this, for $\eta > 0$ we introduce the family of approximating problems  
\begin{equation}
  \label{eq U} \tag{$P_{m,\eta}$}
\begin{cases}
-\Delta u + \lambda_\eta u = \eta |u|^{p-2} u & \text{in} \ \Omega, \\
\lambda_\eta \in \mathbb{R},\quad u \in S_m, & 
\end{cases}
\end{equation} 
 whose solutions are critical points of the energy functional
\begin{equation}
\label{1727}
    I_\eta (u)=\frac{1}{2} \int  | \nabla u |^2 \, dx - \frac{\eta}{p} \int  | u |^p \, dx 
\end{equation}
constrained on $S_m$.
Inspired by  \cite{MR4304693}, we construct a new family (independent of $\eta$) of subsets of $\mathbb{R}^N$ and prove that the min-max level defined through this family admits bounded Palais-Smale sequences for almost every $\eta $ sufficiently close to $1$ (see \S \ref{S3}). 
Once the existence of bounded Palais-Smale sequences has been established, we still face some compactness issues due to the unboundedness of the domain. To overcome these difficulties, we rely on a splitting lemma, originally proved by Benci and Cerami in \cite{BC87ARMA}, together with additional assumptions either on the size of the complement of the domain or on the mass. 
We emphasize that our approach still requires proving the positivity of the Lagrange multiplier. 
As deriving this from a Pohozaev identity argument, as achieved in \cite{MR4755505} and \cite{MR4601303}, is not feasible, we alternatively prove its non-negativity by utilizing information on the \textit{approximate Morse index}, made accessible by the variational method introduced in \cite{MR4748624}.
We also point out that, compared to the case of metric graphs, the Morse index information available here is not sufficient to guarantee the positivity of the Lagrange multiplier. 
As a consequence, we must restrict the class of domains under consideration by requiring a non-existence condition for a certain class of differential equations (see Remark \ref{Nonexistence} for further details). Finally, to obtain solutions to the problem with $\eta=1$, we consider a sequence $\eta_n \nearrow 1$ and perform an asymptotic analysis of the approximated solutions $u_\eta$, generalizing the approach in \cite{MR2825606} to the case of unbounded domains with a bounded complement. 
 Namely, in Proposition \ref{th3}, it will be proved that a blow-up event is restricted to occurring at only a finite number of points. 
 This limitation arises due to the upper bound on the Morse index of $u_\eta$, and contradicts the assumption of positive mass.

\smallskip

This paper provides a positive answer to a question raised in \cite{MRV22JDE} regarding the existence of positive solutions to problem \eqref{Pm} in exterior domains. 
We mention that, while the existence of solutions in the mass-subcritical case for exterior domains has already been addressed in \cite{ZhZh22NODEA} and \cite{LM_PRSE_23}, to the best of our knowledge, this is the first result dealing with the mass supercritical case in exterior domains.  
We would like to highlight that our assumptions regarding the domain's size or the mass's magnitude are not optimal, and we believe they can be removed. 
However, we will investigate this issue in a subsequent paper. 

\medskip

Before analyzing the assumption on the shape of our domains, let us fix the notations on the Morse index. 

\begin{definition}
\label{def morse}
\begin{itemize}
    \item 
Let $\lambda,\eta\in \R$ be fixed and  $u \in  H^1_0(\Omega)$ be a solution of $-\Delta u + \lambda u = \eta |u|^{p-2} u$,
the  \emph{Morse index of $u$}, is the value 
 \begin{equation}\label{second differential}
 \begin{split}
      i (u):= \sup \Big\{\dim Y \ :&\ Y   \subset H^1_0(\Omega) \ \mbox{such that} \\
      &\int_{\Omega} \left(| \nabla \varphi|^2 + (\lambda-(p-1)\eta|u|^{p-2}) \varphi^2\right)\,dx<0 \quad \forall \varphi \in  Y \setminus \{0\}\Big\}.
      \end{split}
\end{equation}

\item 
Given $u \in S_m$, the  \textit{Morse index} as a constrained critical point of $I_\eta$  in $u$ is the value 
\begin{equation} \label{def: morse} 
   \widetilde{i} (u):= \sup \left\{\dim Y \ :\ Y   \subset T_u S_m \ \mbox{such that} \ D^2 I_\eta(u)\left[\varphi,\varphi  \right] <  0, \ \forall \ \varphi \in Y \setminus \left\{ 0\right\}\right\}
\end{equation}
where 
\begin{equation*}
    D^2 I_\eta(u) \left[ \cdot, \cdot \right] := I_\eta''(u) \left[\cdot, \cdot \right]- \frac{ I_\eta' (u)\left[u\right]}{|u|_2^2} \langle \cdot, \cdot \rangle_{L^2}.
\end{equation*}

\item 
Given $\zeta>0$ and $u \in S_m$, the $\zeta$-\textit{approximate Morse index} of $I_\eta$  in $u$ is the value 
\begin{equation} \label{def: app morse} 
   \widetilde{i}_\zeta(u):= \sup \left\{\dim Y \ :\ Y   \subset T_u S_m \ \mbox{such that} \ D^2 I_\eta(u)\left[\varphi,\varphi  \right] <  - \zeta \Vert \varphi \Vert^2, \ \forall \ \varphi \in Y \setminus \left\{ 0\right\}\right\}.
\end{equation}
 \end{itemize}
 \end{definition}

On the domains, we consider the following assumptions:

 \beq
 \label{NE}\tag{NE$_{\Omega}$}
 \begin{split}
&\Omega \ \mbox{ is smooth and }\ \Delta u+ u^{p-1}= 0, \quad u\in H^1_0(\Omega)\setminus\{0\},\ u\geq 0,\\
& \mbox{ has no solution $u$ with }i(u)\le N+2, 
\end{split}
\eeq
and, for $m>0$, 
 \beq
\label{NEm}
\tag{NE$_{\Omega,m}$}
\begin{split}
&\Omega \ \mbox{ is smooth and }\ \Delta u+ u^{p-1}= 0, \quad u\in H^1_0(\Omega)\setminus\{0\}, \ u\geq 0, \\
& \mbox{ has no solution $u$ with }i(u)\le N+2 \mbox{ and } \|u\|_{L^2}<m.
\end{split}
\eeq
\begin{remark} \label{Nonexistence}
By a Pohozaev identity, one can verify that Hypotheses \eqref{NE} and \eqref{NEm} are satisfied if, for example, the complement of the domain $\Omega$ is assumed to be star-shaped with respect to the origin. 
On the other hand, it is possible to show that under suitable assumptions on $\mathbb{R}^N \setminus \Omega$ or on the exponent $p$, there exist domains for which these two hypotheses may fail. 
For a more detailed analysis, readers may refer to \cite{MR1658565}.
Moreover, for similar non-existence hypotheses in the study of normalized solutions,  we refer, for instance, to Hypothesis $(G3)$ in \cite{MR4701352},  or to Hypothesis $(g2)$ in \cite{cingolani2024} for the case of a more general nonlinearity in $\mathbb{R}^N$.
\end{remark}
 
For every fixed $m>0$, we have the following existence result.

\begin{theorem}
\label{T1}
    Let $m>0$, then there exists $\varrho_m>0$ such that
    \eqref{Pm} has a positive solution $u$ whenever $\Omega$ satisfies  \eqref{NEm} and $\R^N\setminus \Omega\subseteq B_{\varrho_m}(0)$.
    Moreover,   $\lambda\ge 0$ and $\widetilde{i} (u)\le N+1$.
\end{theorem}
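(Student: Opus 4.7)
The strategy is to work with the approximated problem \eqref{eq U}, produce solutions $u_\eta$ with a controlled approximate Morse index for almost every $\eta$ in a left neighbourhood of $1$, and then pass to the limit $\eta\nearrow 1$. First I would build a min-max class $\cF$ of compact subsets of $S_m$, independent of $\eta$, realising the $(N+1)$-dimensional linking structure introduced in \cite{MR4304693}: the elements of $\cF$ are images of a fixed simplex under a map built from dilations and translations of the ground state bubble in $\R^N$, cut off and pushed inside $\Omega$. The smallness hypothesis $\R^N\setminus\Omega\subseteq B_{\varrho_m}(0)$ guarantees that $\cF$ is nonempty and that the associated min-max level
\[
c_\eta:=\inf_{A\in\cF}\sup_{u\in A}I_\eta(u)
\]
lies strictly below the first ``two-bubble'' Benci--Cerami threshold \cite{BC87ARMA}, provided $\varrho_m$ is chosen small in terms of $m$.

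The second step is to apply the constrained monotonicity trick of \cite{MR4748624}. Since $\eta\mapsto c_\eta$ is monotone, it is almost everywhere differentiable, and at every point of differentiability one obtains a bounded Palais--Smale sequence $\{v_n\}\subset S_m$ for $I_\eta$ at level $c_\eta$, together with the approximate Morse-index bound $\widetilde i_{1/n}(v_n)\le N+1$ inherited from the dimension of the linking class. Boundedness is built into the trick, so one avoids the Pohozaev identity, which is unavailable on $\Omega$. By the Benci--Cerami splitting lemma and the energy gap below the first compactness threshold, the only possible loss of mass is through a single bubble escaping to infinity; the hypothesis \eqref{NEm} rules this out, as such a bubble would be a nontrivial nonnegative solution of $-\Delta w=w^{p-1}$ in $H^1(\R^N)$ with $\Vert w\Vert_{L^2}<m$ and Morse index at most $N+2$. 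Hence $v_n\to u_\eta$ strongly in $H^1_0(\Omega)$, $u_\eta\in S_m$ solves \eqref{eq U}, and $\widetilde i(u_\eta)\le N+1$; replacing $u_\eta$ by $|u_\eta|$ and applying the strong maximum principle yields positivity.

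The third step is to show $\lambda_\eta\ge 0$. If $\lambda_\eta<0$, I would build a negative subspace of $D^2I_\eta(u_\eta)$ on $T_{u_\eta}S_m$ of dimension $N+2$, combining the radial direction produced by $u_\eta$ itself with suitably cut-off translations $\varphi\,\partial_{x_j}u_\eta$, using \eqref{NEm} to make the truncation error negligible; this would contradict $\widetilde i_\zeta(u_\eta)\le N+1$. With $\lambda_\eta\ge 0$ secured, I select $\eta_n\nearrow 1$ and analyse the limit of $u_{\eta_n}$. Boundedness follows from the energy bound and the sign of $\lambda_{\eta_n}$, while possible concentration is governed by Proposition \ref{th3}: the Morse-index constraint forces blow-up to occur at only finitely many points, each absorbing a definite quantum of $L^2$-mass. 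Since $\Vert u_{\eta_n}\Vert_{L^2}=m$ is prescribed and $\varrho_m$ is chosen so that $c_\eta$ is smaller than any such concentration energy, blow-up is impossible and $u_{\eta_n}\to u$ strongly, with $u$ positive, $\lambda\ge 0$, and $\widetilde i(u)\le N+1$.

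The main obstacle is the simultaneous control of the min-max level below the Benci--Cerami threshold uniformly in $\eta$ close to $1$ and the quantitative matching between the approximate Morse-index bound and the non-existence hypothesis \eqref{NEm}, both when excluding the escape of a bubble in step two and when preventing blow-up in the limit $\eta\to 1$. Both obstructions are absorbed into the calibrated choice of the parameter $\varrho_m$.
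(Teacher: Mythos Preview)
Your overall strategy---monotonicity trick of \cite{MR4748624} on the linking class of \cite{MR4304693}, then blow-up analysis \`a la Proposition~\ref{th3}---matches the paper. But several steps are out of order or misattributed, and one is simply wrong.

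\textbf{The compactness argument in Step~2 presupposes what you only prove in Step~3.} The Benci--Cerami splitting for the unconstrained functional $E_{\lambda,\eta}$ requires $\lambda>0$; otherwise there is no coercive norm and the profile decomposition does not apply. The paper therefore \emph{first} shows $\lambda_\eta\ge 0$ (Proposition~\ref{prop1}, via Lemmas~\ref{lemma2}--\ref{lemma3}), \emph{then} upgrades to $\lambda_\eta>0$ (Proposition~\ref{P5.1}), and only \emph{then} runs the compactness argument (Theorem~\ref{Th.eta}). Your Step~2 inverts this and is circular.

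\textbf{You misidentify the role of \eqref{NEm}.} A bubble escaping to infinity solves the limit problem on $\R^N$, not on $\Omega$, and when $\lambda=0$ that is ruled out by Pohozaev, not by \eqref{NEm}. In the paper \eqref{NEm} is used in Proposition~\ref{P5.1} to exclude the case $\lambda_\eta=0$ with a nontrivial weak limit \emph{inside} $\Omega$: such a limit would solve $-\Delta v=\eta v^{p-1}$ in $\Omega$ with Morse index $\le N+2$ and $|v|_2\le m$, which \eqref{NEm} forbids after rescaling.

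\textbf{``Replacing $u_\eta$ by $|u_\eta|$'' is not valid.} The nonlinearity $|u|^{p-2}u$ is odd, so $|u_\eta|$ is in general not a solution. The paper secures $u_n\ge 0$ from the outset (Proposition~\ref{P3.8}, via \cite[Remark~1.11]{MR4748624}) and then applies Harnack.

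\textbf{Your mechanism for $\lambda_\eta\ge 0$ is unnecessarily delicate.} Building a negative subspace from $u_\eta$ and cut-offs of $\partial_{x_j}u_\eta$ runs into boundary issues ($\partial_{x_j}u_\eta\notin H^1_0(\Omega)$) and has nothing to do with \eqref{NEm}. The paper exploits instead that $\Omega$ is an exterior domain: for any $\lambda<0$ one places $k$ disjoint scaled bumps far out in $\Omega$ (Lemma~\ref{lemma3}) to get a $k$-dimensional negative subspace for $-\Delta+\lambda$, and hence for $D^2I_\eta(u_n)+\lambda_n$ via Lemma~\ref{lemma2}. This yields $\lambda_\eta\ge 0$ directly from the approximate Morse bound, with no reference to $u_\eta$ or \eqref{NEm}.

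Finally, your description of the blow-up contradiction is slightly off: it is not that ``$c_\eta$ is smaller than any concentration energy,'' but rather an $L^2$-mass accounting. Since $p>2+\tfrac4N$, the exponent $\tfrac N2-\tfrac{2}{p-2}$ is positive, so $\lambda_n^{\frac N2-\frac{2}{p-2}}m^2\to\infty$; yet Lemma~\ref{th2}(e) and the exponential decay \eqref{eq10} show this quantity stays bounded, which is the contradiction.
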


In the proof of Theorem \ref{T1}, there are two main points where we use that $\R^N\setminus\Omega$ is small.
The first one is in the construction of the linking structure for the energy functional $I_\eta$, which has to be uniform for $\eta$ in a neighborhood of 1.
Indeed, to get this structure, we take advantage of the mountain pass solution $\widetilde Z_m$ of Problem \eqref{Pm} on all the space $\R^N$.
Anyway, the crucial point where it is used is in the compactness analysis, where we have to work in a suitable compactness range, in the spirit of \cite{BC87ARMA}.

If we consider $\Omega$ fixed, without any assumptions on the size of $\R^N\setminus \Omega$, and make a careful analysis of the asymptotic energy of $\widetilde Z_m$, as $m\to\infty$, we will see that we can recover both the linking structure and the compactness, for large $m$. We point out that this approach is possible because here we are working on the mass super-critical regime  $p > 2 + \frac{4}{N}$ (see \eqref{1725} and \eqref{1726}).
As a consequence, we get the following result.

\begin{theorem}
\label{T2}
   Let $\Omega$ be an exterior domain, then there exists $m_\Omega>0$ such that
    \eqref{Pm} has a positive solution $u_m$ whenever $\Omega$ satisfies  \eqref{NE} and $m>m_\Omega$.
    Moreover,    $\lambda > 0$ and $\widetilde{i} (u_m)\le N+1$.
\end{theorem}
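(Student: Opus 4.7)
The plan is to follow the same overall strategy as in Theorem \ref{T1}, namely to approximate \eqref{Pm} by \eqref{eq U} for $\eta$ in a small left-neighborhood of $1$, extract constrained critical points $u_\eta$ of $I_\eta$ on $S_m$ via a linking min-max coupled with the monotonicity trick of Borthwick--Chang--Jeanjean--Soave, and then pass to the limit $\eta\nearrow 1$ through the blow-up analysis of Proposition \ref{th3}. Since $\Omega$ is now arbitrary but fixed, the two places where the smallness of $\R^N\setminus\Omega$ entered in Theorem \ref{T1} -- the construction of the linking geometry and the placement of the min-max level inside the Benci--Cerami compactness window -- have to be recovered from the largeness of $m$. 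The mass-supercritical hypothesis $p>2+\tfrac{4}{N}$ is crucial here, through the scaling analysis of the mountain-pass solution $\widetilde Z_m$ of \eqref{Pm} on $\R^N$: writing $\widetilde Z_m(x)=\alpha_m\widetilde Z_1(\beta_m x)$, one verifies that $\beta_m\to 0$ as $m\to\infty$ and, via \eqref{1725}--\eqref{1726}, the corresponding energy $\widetilde c_m:=I(\widetilde Z_m)$ tends to zero.

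Concretely, I would proceed as follows. First, taking $m$ sufficiently large, use $\widetilde Z_m$ (suitably translated, cut off, and projected back onto $S_m$) to build a family of sets of topological dimension at most $N+1$ in $H^1_0(\Omega)\cap S_m$, depending continuously on $\eta$ in a small neighborhood of $1$; because $\beta_m\to 0$ flattens the profile of $\widetilde Z_m$, the obstruction created by $\partial\Omega$ becomes negligible. This yields, for each admissible $\eta$, a linking level $c_{m,\eta}$ with $c_{m,\eta}\le\widetilde c_m+o(1)$ as $m\to\infty$, so that $c_{m,\eta}$ lies strictly below the Benci--Cerami compactness threshold for $m$ large. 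Second, apply the monotonicity trick from \cite{MR4748624} to obtain, for almost every admissible $\eta$, a bounded Palais--Smale sequence at level $c_{m,\eta}$ whose approximate Morse index does not exceed $N+1$. Third, combine the previous energy estimate with the splitting lemma of \cite{BC87ARMA} to produce a constrained critical point $u_\eta\in S_m$ of $I_\eta$ with $\widetilde i(u_\eta)\le N+1$. The non-negativity $\lambda_\eta\ge 0$ is then obtained as in Theorem \ref{T1}: otherwise, the approximate Morse index bound together with the splitting lemma would yield a nontrivial non-negative solution of $\Delta u+u^{p-1}=0$ on $\Omega$ with Morse index $\le N+2$, contradicting \eqref{NE}.

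Finally, letting $\eta_n\nearrow 1$, Proposition \ref{th3} rules out blow-up of $\{u_{\eta_n}\}$: the Morse index bound permits only finitely many concentration points, and this is incompatible with $\|u_{\eta_n}\|_{L^2}=m$. Consequently $u_{\eta_n}$ converges strongly to some $u\in S_m$ that solves \eqref{Pm} with $\widetilde i(u)\le N+1$. The strict positivity $\lambda>0$ (in contrast to $\lambda\ge 0$ in Theorem \ref{T1}) should follow from the quantitative estimate $\widetilde c_m\to 0$ combined with a direct contradiction against \eqref{NE}: if $\lambda=0$, the limit $u$ would again solve the unweighted equation $\Delta u+u^{p-1}=0$ on $\Omega$ with controlled Morse index, violating \eqref{NE}.

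The main obstacle is the energy estimate for $c_{m,\eta}$: one needs $c_{m,\eta}$ to sit uniformly below the compactness threshold for all $\eta$ in a small neighborhood of $1$, on the \emph{fixed} domain $\Omega$. The delicate interplay between the rate $\beta_m\to 0$ (which makes $\widetilde Z_m$ fit into $\Omega$ up to a negligible truncation error) and the decay rate of $\widetilde c_m$ (which has to stay under the threshold) is precisely what the mass-supercritical scaling \eqref{1725}--\eqref{1726} is designed to deliver, and this is exactly why the argument is restricted to $p>2+\tfrac{4}{N}$.
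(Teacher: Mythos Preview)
Your overall strategy is correct and mirrors the paper's proof: reuse the linking / monotonicity / blow-up machinery of Theorem~\ref{T1}, replacing the smallness of $\Omega^c$ by the largeness of $m$ via the scaling of $\widetilde Z_m$. The paper carries this out through Lemma~\ref{L7}, which is precisely the energy computation you identify as the ``main obstacle''.

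There is, however, a genuine imprecision in your energy claim that matters. You write $c_{m,\eta}\le\widetilde c_m+o(1)$ and conclude that the level lies in the Benci--Cerami window. But you have already noted $\widetilde c_m\to 0$, and the compactness threshold is $\min\{2,1+\tfrac{2}{N}\}\widetilde c_m$, which \emph{also} tends to zero; an additive $o(1)$ error is therefore worthless. What is actually needed, and what the paper proves in Lemma~\ref{L7}, is the relative estimate
\[
\max_{(y,h)} I\bigl(Z_m[y,h]\bigr)=c_m\bigl(1+o(1)\bigr)\qquad\text{as }m\to\infty,
\]
i.e.\ the cut-off error is $o\bigl(\mu^{-(\frac{2p}{p-2}-N)}\bigr)=o(c_m)$. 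Since $\widetilde Z_m(x)=\mu^{-\frac{2}{p-2}}U(x/\mu)$ with $\mu\to\infty$, the truncation over the fixed annulus $B_{2\bar\varrho}$ contributes like $\mu^{-\frac{2p}{p-2}}\cdot|B_{2\bar\varrho}|$ in the $L^p$ term (and analogously for the gradient and $L^2$ terms), which is indeed $o(\mu^{-(\frac{2p}{p-2}-N)})$ because the leading term carries the extra factor $\mu^N$ from integrating the full profile. Your final paragraph shows you sense this competition of rates; state it explicitly.

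A second, minor conflation: you attribute $\lambda_\eta\ge 0$ to the argument via \eqref{NE}. In the paper's scheme, $\lambda_\eta\ge 0$ comes purely from the approximate Morse index bound (Lemmas~\ref{lemma3}--\ref{lemma2}); it is the \emph{strict} inequality $\lambda_\eta>0$ that uses \eqref{NE} (respectively \eqref{NEm}) to exclude $\lambda_\eta=0$, as in Proposition~\ref{P5.1}. Your description of the final step $\lambda>0$ is correct: under \eqref{NE} any limit with $\lambda=0$ would produce a forbidden solution of $\Delta u+u^{p-1}=0$ with $i(u)\le N+2$, with no mass restriction needed.
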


\begin{remark}
    By the proof of Theorem \ref{T2}, we will verify that for every $\bar\varrho>0$ there exists $m_{\bar\varrho}>0$ such that $m_\Omega\le m_{\bar\varrho}$, when $\R^N\setminus\Omega\subseteq B_{\bar\varrho(0)}$.
\end{remark}

The main difficulty in proving Theorem \ref{T2} is to get the energy estimates and the linking structure uniformly for large $m$ (see Lemma \ref{L7}). 

\begin{remark}
Utilizing Remark \ref{rho1} and Lemma \ref{L3.1}, it becomes evident that the assumption concerning smallness in Theorem \ref{T1} can be analyzed with respect to the capacity of $\R^N\setminus\Omega$.
Moreover, the map $m\mapsto \varrho_m$ is an increasing map.

\end{remark}

\begin{remark}
 According to the proofs of Theorems \ref{T1} and \ref{T2}, the assumptions guarantee the existence of a solution to problem \eqref{eq U} for every $\eta\in(1-\eta_\Omega,1+\eta_\Omega)$, where $\eta_\Omega>0$ is suitably chosen. 
 Consequently, for each $\eta\in(1-\eta_\Omega,1+\eta_\Omega)$, employing scaling arguments, we derive a solution $\bar u_\eta$ for problems analogous to \eqref{Pm}. 
 However, concerning the solution $\bar u_\eta$, we have not manageable information about either the domain $\Omega_\eta$ or the mass $\|\bar u_\eta\|_{L^2(\Omega_\eta)}$, both of which depend on the parameters $\eta$ and $\lambda_\eta$. 
\end{remark}

\medskip

The paper is organized as follows: in Section 1 we introduce the notations and some tools, 
then we fix the mass $m$ and highlight the linking structure that produces bounded Palais-Smale sequences. 
In Section 3 we analyse some features of the weak limits of the PS sequences, and in Section 4 we show that they solve some approximating problems.
Finally, in Section 5, we prove Theorem \ref{T1} by performing a blow-up analysis of the approximate solutions.
In Section 6, Theorem \ref{T2} is proved.

\newpage


\section{Notations, linking structure and bounded PS sequences}
\label{S3}


{\bf Notations}:
{\small
\begin{itemize}

\item 
$B_r(y)$ denotes the open ball of radius $r>0$ and center $y\in\R^N$, and we write $B_r$ when $y=0$.

\item For any subset $\cD\subset \R^N$, we denote by $\cD^c=\R^N\setminus \Omega$ its complementary set,   and by $|\cD|$ its Lebesgue measure.
\item
For $\Omega\subseteq \R^N$ and $1\leq p\le \infty$,  $L^p(\Omega)$ and $H^1_0(\Omega)$ are the usual Lebesgue and Sobolev spaces, with norm
$|u|_{p}=\left(\int |u|^pdx\right)^{1/p}$, $1\le p<+\infty$, $|u|_\infty =\esssup_{\Omega}|u|$, and $\|u\|=\left(\int (|\D u|^2+u^2)\,dx\right)^{1/2}$, respectively. 

Here, we agree $H^1_0(\Omega)\subset H^1(\R^N)$, setting $u\equiv 0$ in $\R^N\setminus\Omega$, $\forall u\in H^1_0(\Omega)$, and write $\int=\int_{\R^N}$.

\item For any measurable set $\cD\subset \R^N$, we denote by $|\cD|$ its Lebesgue measure.

\item $c,\bar c,\tilde c,\ldots$ are generic constants that can vary from line to line.

\end{itemize}
}    

It has been proved by Kwong in \cite{MR0969899} that there exists a function $U \in C^2(\R^N)$, unique up to translations, radially symmetric and decreasing, that solves
\begin{equation} \label{Pinf}
\tag{$P_\infty$}
\begin{cases} -\Delta U + U = U^{p-1} & \text{ in } \R^N, \\ 
U>0 \\
U(0)=| U |_\infty.
\end{cases} 
\end{equation}

The function $U$ exhibits the following asymptotic behavior: there exists a constant \(c_1 > 0\) for which
\begin{equation} \label{eq1}
U(r) e^{r} r^{\frac{N-1}{2}} \to c_1 \text{ as } r \to \infty,
\end{equation}
as well as
\begin{equation} \label{eq2}
U'(r) e^{r} r^{\frac{N-1}{2}} \to -c_1 \text{ as } r \to \infty.
\end{equation}

Let $\eta>0$, the scaled function $U_\eta:=\eta^{-\frac{1}{p-2}}U$ solves
\begin{equation} \label{Pinfeta}
\tag{$P_\infty^\eta$}
\begin{cases} -\Delta U_\eta + U_\eta = \eta U_\eta^{p-1} & \text{ in } \R^N, \\ 
U_\eta>0 \\
U_\eta(0)=|U_\eta|_\infty.
\end{cases} 
\end{equation}
Moreover, for $\eta>0$ and $m>0$, and choosing $\mu=\mu(\eta,m)$ as the solution of 
\begin{equation}
\label{1725}
\frac{\mu^{p-\left(2+\frac 4N\right)} }{\eta^{\frac{2}{N}}}=\left(\frac{ m}{m_0}\right)^{\frac2N(p-2)}, \quad \quad m_0:=|U|_2,
\end{equation}
it is easy to verify that
\begin{equation}
\label{1726}
\widetilde Z_{m,\eta}(x):= \mu^{-\frac{2}{p-2}} U_\eta \left(\frac{x}{\mu}\right)
\end{equation}
solves the prescribed mass problem
\begin{equation} \label{Pminfeta}
\tag{$P_{m,\infty}^\eta$}
\begin{cases} -\Delta\widetilde  Z_{m,\eta} + \lambda_m\widetilde  Z_{m,\eta} = \eta\widetilde  Z_{m,\eta}^{p-1} & \text{ in } \R^N, \\ 
|Z_{m,\eta}|_2=m
\end{cases} 
\end{equation}
where $\lambda_m=\mu^{-2}$. 
\begin{remark}
    \label{RJJ}
The solution $\widetilde Z_{m,\eta}$ of \eqref{Pminfeta} can be obtained as a mountain pass-type critical point of the functionale $I_\eta$ constrained on $\widetilde{S}_m$ (see, for example, \cite{JJ97}).
\end{remark}

We will denote the critical level related to \eqref{Pminfeta} as 
\begin{equation}
\label{1728}
    c_{m,\eta}=  I_\eta \left( \widetilde  Z_{m,\eta} \right). 
\end{equation}
In the notation we will drop the dependence of $\eta$ when $\eta=1$, writing simply $c_m:=c_{m,1}$, $I:=I_\eta$, $\widetilde Z_{m}:=\widetilde Z_{m,1}$, \ldots . 

\begin{remark}
Taking into account \eqref{1725}--\eqref{1728}, direct computations show that the following energy relations hold:
 \begin{equation}
 \label{eq_decr}
     c_{m,\eta}=\eta^{-\frac{4}{N}\left[p-\left(2+\frac{4}{N}\right)\right]^{-1}} c_m\quad\Longrightarrow\quad  c_{m,\eta}>c_{m},\quad  \forall\eta\in(0,1),\ \mbox{ and } c_{m,\eta}\to c_{m}\ \mbox{ as }\eta\to 1.
    \end{equation}
\end{remark}
 
\medskip

For $h \in \R$ and $ u \in H^1( \R^N)$, we define the scaling
\begin{equation*}
h \ast u(x) := e^{\frac{N}{2} h} u(e^h x).
\end{equation*}
Notice that it is immediate to verify that this scaling preserves the $ L^2 $-norm. i.e. \( \|h \ast u\|_2 = \|u\|_2 \) for all \( h \in \mathbb{R} \) and that
\beq\label{1638}
c_{m,\eta}=I_\eta(\widetilde Z_{m,\eta})=I_\eta(0\ast \widetilde Z_{m,\eta}),\qquad \forall \eta>0,
\eeq
$$
\lim_{h\to -\infty}I_\eta (h\ast \widetilde Z_{m})=0,\qquad  \lim_{h\to +\infty}I_\eta(h\ast\widetilde  Z_{m})=-\infty,\ \mbox{ uniformly in }\eta\in[1/2,1].
$$

Then, we can fix $\bar h>0$ such that 
\beq
\label{1847}
 I_\eta(-\bar h\ast\widetilde  Z_{m})<c_{m},\qquad  I_\eta(\bar h\ast\widetilde  Z_{m})<0\qquad \forall  \eta\in[1/2,1].
\eeq
Now, we consider a smooth radial cut-off function $\vartheta \ : \ \R^n \to \left[0,1\right]$ such that
\begin{equation*}
\vartheta(x)= \begin{cases} 0 & 0 \leq |x| \leq 1 \\
   1 & |x| > 2 \\
   |\nabla \vartheta| \leq { 2}         
    \end{cases}
\end{equation*}
and, for $\varrho>0$, we define the functions 
\begin{equation}
\label{1624}
\widetilde  Z_{m  }{[y,h] }=h \ast\widetilde  Z_{m } (\cdot-y) ,\quad  
{Z}_{m,\varrho   }  [y,h] = m \ \frac{\displaystyle \vartheta\left({x}/{\varrho}\right)   \widetilde Z_{m }{[y,h] }}{\displaystyle \left| \vartheta\left({x}/{\varrho}\right) \widetilde Z_{m  }{[y,h] }\right|_2},\quad y\in\R^N,\ h\in\R.
\end{equation}
We would like to emphasize that if $\R^N \setminus \Omega \subset B_\varrho$, then  $ {Z}_{m ,\varrho}[ {y},h] \in S_m$.
From now on, we will denote the scaled cut-off as $\vartheta_\varrho:= \vartheta(x / \varrho)$. 

For \(\bar  R > 0 \)  which will be determined later, we set
\begin{equation}
\label{EQ}
Q := \overline{B}_{\bar R} \times  [-\bar h, \bar h] \subset \mathbb{R}^N \times \mathbb{R}
\end{equation}
and, for $m,\varrho >0$, we define
\[
\Gamma_{m,\varrho} := \{ \gamma : Q \to S_m \ :\  \gamma \text{ continuous}, \, \gamma(y, h) =   {Z}_{m,\varrho}  [y,h]  \text{ for all } (y, h) \in \partial Q \}.
\]
When $\Omega^c\subset B_{\varrho}$, $\varrho$ is small and $\eta$ is near 1, our aim  will be to find a solution of \eqref{eq U}  whose candidate critical level is
\beq
\label{1453}
c^\eta_{\Omega,m} := \inf_{\gamma \in \Gamma_{m,\varrho}} \max_{(y,   h) \in Q} I_\eta (\gamma(y, h)).
\eeq

In the following lemma, we collect some properties that can be proved by standard computations (see, for instance, \cite[Lemma 4.1]{BC87ARMA}).

\begin{lemma}
\label{L3.1}
Let $\varrho>0$, $\Omega^c\subset B_{\varrho}$, and $\bar m>0$, then
\begin{itemize} 
    \item[(i)] $(y,h)\mapsto {Z}_{m,\varrho}[y,h]$  is a continuous map $\R^N\times \R\to H^1_0(\Omega)$, for every \( m >0 \).
    \item[(ii)] \(  {Z}_{m,\varrho}[y,h] \to \widetilde Z_{m}[y,h] \), as $\varrho\to 0$,  strongly in \( H^1(\R^N) \),   uniformly in \( y\in\R^N \), $h\leq \const$ and 
     $m\geq  \bar m$. 
    \item[(iii)]      \(  {Z}_{m,\varrho}[y,h] \to \widetilde Z_{m}[y,h] \), as $|y|\to\infty$,  strongly in \( H^1(\R^N) \),    uniformly in { $h\in\R$}, $ \Omega^c\subset B_\varrho$  and 
   $m\geq  \bar m$. 
\end{itemize}
\end{lemma}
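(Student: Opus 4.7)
All three items reduce to bounds on the tail quantities
\begin{equation*}
E(y,h,\varrho,m):=\int_{B_{2\varrho}}|\widetilde Z_m[y,h]|^2\,dx,\qquad F(y,h,\varrho,m):=\int_{B_{2\varrho}}|\nabla\widetilde Z_m[y,h]|^2\,dx,
\end{equation*}
for if we set $k_{m,\varrho}[y,h]:=m/|\vartheta_\varrho\widetilde Z_m[y,h]|_2$, then
\begin{equation*}
Z_{m,\varrho}[y,h]-\widetilde Z_m[y,h]=(k_{m,\varrho}-1)\vartheta_\varrho\widetilde Z_m[y,h]-(1-\vartheta_\varrho)\widetilde Z_m[y,h],
\end{equation*}
and $m^2-|\vartheta_\varrho\widetilde Z_m[y,h]|_2^2=\int(1-\vartheta_\varrho^2)|\widetilde Z_m[y,h]|^2\,dx\le E$, while the $H^1$-norm of $(1-\vartheta_\varrho)\widetilde Z_m[y,h]$ is dominated by $C(E+F+\varrho^{-2}E)$ (the last contribution coming from $|\nabla\vartheta_\varrho|\le 2/\varrho$). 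After the change of variables $z=e^h(x-y)$, both $E$ and $F$ become integrals of $|\widetilde Z_m|^2$ and $|\nabla\widetilde Z_m|^2$ over the ball $B_{2e^h\varrho}(-e^hy)$.

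\emph{(i)} Standard strong continuity of translations in $L^2$ and $H^1$, together with the fact that $h\ast\cdot$ is an $L^2$-isometry depending continuously on $h$ in $H^1$, gives continuity of $(y,h)\mapsto\widetilde Z_m[y,h]$ into $H^1(\mathbb{R}^N)$. Multiplication by the fixed $\vartheta_\varrho\in W^{1,\infty}$ is a bounded operator on $H^1$, and $|\vartheta_\varrho\widetilde Z_m[y,h]|_2\ge(m^2-E)^{1/2}>0$ because $\widetilde Z_m>0$ everywhere and $\vartheta_\varrho\equiv 1$ outside $B_{2\varrho}$; dividing by a strictly positive continuous function preserves continuity. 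The image lies in $H^1_0(\Omega)$ because $\vartheta_\varrho\equiv 0$ on $B_\varrho\supseteq\Omega^c$.

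\emph{(ii)} Using the explicit formula $\widetilde Z_m(x)=\mu^{-2/(p-2)}U(x/\mu)$ with $\mu=\mu(m)$ monotone in $m$, one gets uniform $L^\infty$ bounds on $\widetilde Z_m$ and $\nabla\widetilde Z_m$ for $m\ge\bar m$. The ball $B_{2e^h\varrho}(-e^hy)$ has measure $C(e^h\varrho)^N$, so $E\le C\varrho^N e^{Nh}$ and $F\le C\varrho^N e^{(N+2)h}$ uniformly in $y\in\mathbb R^N$, in $h\le\mathrm{const}$, and in $m\ge\bar m$. Consequently $\varrho^{-2}E=O(\varrho^{N-2})$, which vanishes as $\varrho\to 0$ because $N\ge 3$.

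\emph{(iii)} As $|y|\to\infty$, the center $-e^hy$ of the integration ball escapes to infinity with norm $e^h|y|$, so that $E$ and $F$ are integrals against $|\widetilde Z_m|^2$ and $|\nabla\widetilde Z_m|^2$ over a far-away ball; the radial exponential decay \eqref{eq1}--\eqref{eq2} of $U$ then forces $E,F\to 0$. The main obstacle is the simultaneous uniformity in $h\in\mathbb{R}$ and $m\ge\bar m$: when $h\to-\infty$ the ball shrinks and its center $-e^hy$ may stay bounded, but in that regime the prefactor $e^{Nh}$ in $|\widetilde Z_m[y,h](x)|^2=e^{Nh}|\widetilde Z_m(e^h(x-y))|^2$ kills the pointwise contribution; when $h\to+\infty$ and $m$ is large, the interplay between $e^h|y|$ and the decay scale $\mu$ must be tracked, but the exponential nature of the tail of $U$ ultimately dominates. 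This bookkeeping is carried out exactly as in \cite[Lemma~4.1]{BC87ARMA}, which transfers verbatim to the renormalized functions $Z_{m,\varrho}[y,h]$.
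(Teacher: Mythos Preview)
Your proposal is correct and takes essentially the same approach as the paper, which gives no argument beyond the remark that the properties ``can be proved by standard computations (see, for instance, \cite[Lemma~4.1]{BC87ARMA}).'' Your explicit decomposition $Z_{m,\varrho}-\widetilde Z_m=(k_{m,\varrho}-1)\vartheta_\varrho\widetilde Z_m[y,h]-(1-\vartheta_\varrho)\widetilde Z_m[y,h]$, the change of variables $z=e^h(x-y)$, and the $L^\infty$ bounds on $\widetilde Z_m$ and $\nabla\widetilde Z_m$ (valid uniformly for $m\ge\bar m$ because $\mu(m)$ is bounded below) are precisely what those ``standard computations'' amount to, and your final appeal to \cite[Lemma~4.1]{BC87ARMA} for the bookkeeping in (iii) coincides with the paper's own reference.
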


\begin{remark}
\label{rho1}
Observe that by Lemma \ref{L3.1} $(ii)$ and Remark \ref{RJJ} we can fix $\varrho_1>0$ such that if $ \Omega^c\subset B_\varrho$, with $\varrho<\varrho_1$, then
 we can use ${Z}_{m,\varrho}[y,-\bar h]$ and ${Z}_{m,\varrho}[y,\bar h]$ as endpoints for the paths in the mountain pass characterization of $c_m$.
 Moreover,
   $$
    \max_{t\in[0,1]} I\left((1-t){Z}_{m,\varrho}[y,-\bar h]+t\widetilde{Z}_{m,\varrho}[y,-\bar h]\right)<c_m,
    $$
    $$
        \max_{t\in[0,1]} I\left((1-t){Z}_{m,\varrho}[y,\bar h]+t\widetilde{Z}_{m,\varrho}[y,\bar h]\right)<c_m.
    $$
\end{remark}

\medskip

Now we recall the notion of barycenter of a function \( u \in H^1(\mathbb{R}^N) \setminus \{0\} \) which has been introduced in \cite{MR1989833} (see also \cite{BaWe05}).
Setting
\[
\tilde{u}(x) = \frac{1}{|B_1(0)|} \int_{B_1(x)} |u(y)| \, dy,
\]
we observe that \(\tilde{u}\) is bounded, continuous, and vanish at infinity, so the function
\[
\hat{u}(x) = \left( \tilde{u}(x) - \frac{1}{2} \max \tilde{u} \right)^+
\]
is well-defined, continuous, and has compact support. Therefore we can define \( \beta : H^1(\mathbb{R}^N) \setminus \{0\} \to \mathbb{R}^N \) as
\[
\beta(u) = \frac{1}{|\hat{u}|_1} \int_{\mathbb{R}^N} \hat{u}(x) \, x \, dx.
\]
The map \( \beta \) is well defined, because \(\hat{u}\) has compact support, and it is not difficult to verify that it satisfies the following properties:
\begin{itemize}
    \item \( \beta \) is continuous in \( H^1(\mathbb{R}^N) \setminus \{0\} \);
    \item if \( u \) is a radial function, then \( \beta(u) = 0 \);
    \item \( \beta(tu) = \beta(u) \) for all \( t \neq 0 \) and for all \( u \in H^1(\mathbb{R}^N) \setminus \{0\} \);
    \item setting \( u_z(x) = u(x - z) \) for \( z \in \mathbb{R}^N \) and \( u \in H^1(\mathbb{R}^N) \setminus \{0\} \), we have \( \beta(u_z) = \beta(u) + z \).
\end{itemize}
Now, define
\[
\mathcal{D} := \{ D \subseteq S_m : D \text{ is compact, connected, } (-\bar  h) \ast Z_m ,  \bar h \ast Z_m \subseteq D \},
\]
\[
\mathcal{\widetilde{D}} := \{ D \subseteq \widetilde{S}_m : D \text{ is compact, connected, } { (-\bar h)} \ast Z_m , {\bar h} \ast Z_m \subseteq D \},
\]
\[
\mathcal{D}_0 := \{ D \in \mathcal{D} : \beta(u) = 0 \text{ for all } u \in D \},
\qquad 
\mathcal{\widetilde{D}}_0 := \{ D \in \mathcal{\widetilde{D}} : \beta(u) = 0 \text{ for all } u \in D \}.
\]

We also set
\[
\ell_m := \inf_{D \in \mathcal{\widetilde{D}}} \max_{u \in D} I (u)
\]
\[
\ell_{m}^0 := \inf_{D \in \mathcal{\widetilde{D}}_0} \max_{u \in D} I (u).
\]
\begin{lemma} \label{lemma4} 
Let $ \Omega^c\subset B_{\varrho_1}$, then
$$
\ell_{m}^0=\ell_m=c_m.
$$
\end{lemma}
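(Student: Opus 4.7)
The plan is to establish the chain of inequalities $\ell_m \leq \ell_m^0 \leq c_m$ together with $c_m \leq \ell_m$.

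The inequality $\ell_m \leq \ell_m^0$ is immediate from the inclusion $\widetilde{\mathcal{D}}_0 \subseteq \widetilde{\mathcal{D}}$, since the inf of $\max_D I$ is taken over a strictly smaller family. For the bound $\ell_m^0 \leq c_m$, the idea is to exhibit a concrete admissible candidate: the scaling path
\[
D_0 := \{h \ast Z_m : h \in [-\bar h,\bar h]\} \subset \widetilde{S}_m.
\]
This is a compact, connected subset containing both endpoints $(\pm\bar h)\ast Z_m$. Since $Z_m = \widetilde Z_m$ is radial, each $h\ast Z_m$ is radial too, so $\beta(h \ast Z_m)=0$ along $D_0$, giving $D_0 \in \widetilde{\mathcal{D}}_0$. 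By Remark \ref{RJJ}, the Jeanjean-type mountain pass characterization of $\widetilde Z_m$ ensures that $h \mapsto I(h\ast Z_m)$ attains its maximum $c_m$ at $h=0$, so $\max_{D_0} I = c_m$ and $\ell_m^0 \leq c_m$ follows.

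The nontrivial direction is $c_m \leq \ell_m$, which I would prove by contradiction. Suppose $\ell_m < c_m$; then there exists $D \in \widetilde{\mathcal{D}}$ and $\delta>0$ such that $D \subset A:= \{u \in \widetilde S_m : I(u) < c_m-\delta\}$. Since $\widetilde S_m$ is a smooth Hilbert submanifold of $H^1(\R^N)$ and $A$ is open, each connected component of $A$ is open, hence path-connected. As $D$ is connected, $D$ lies in a single path-connected component $V$ of $A$. In particular, the endpoints $(-\bar h)\ast Z_m$ and $\bar h\ast Z_m$—both of which sit in $A$ by \eqref{1847}—lie in $V$ and can be joined by a continuous path $\gamma:[0,1]\to V\subset \widetilde S_m$ satisfying $\max_{t} I(\gamma(t)) < c_m - \delta$. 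This contradicts the mountain pass characterization of $c_m$ recalled in Remark \ref{RJJ}, since $c_m$ is the infimum over such paths. Therefore $\ell_m \geq c_m$, and assembling the three inequalities yields $\ell_m^0 = \ell_m = c_m$.

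The main obstacle is the last step, which relies on two ingredients: first, that $c_m$ genuinely coincides with the mountain pass level realized by paths in $\widetilde S_m$ joining the specific endpoints $(\pm\bar h)\ast Z_m$—this is exactly the content of Jeanjean's construction recalled in Remark \ref{RJJ}, combined with the choice of $\bar h$ in \eqref{1847}; and second, that open connected subsets of the manifold $\widetilde S_m$ are path-connected, which is the reason the argument promotes a compact connected set inside a sublevel region to an actual path. The hypothesis $\Omega^c \subset B_{\varrho_1}$ plays no direct role in the minimax identity itself, but it justifies the preliminary choice of $\bar h$ and the identification of the endpoints in the setup of $\widetilde{\mathcal{D}}$ and $\widetilde{\mathcal{D}}_0$.
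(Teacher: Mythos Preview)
Your argument for $c_m \leq \ell_m$ via path-connectedness of open sublevel sets in the Hilbert manifold $\widetilde S_m$ is sound and matches the approach of \cite[Lemma~3.2]{MR4304693}, which the paper invokes. The gap is in the inequality $\ell_m^0 \leq c_m$. You write ``$Z_m = \widetilde Z_m$'', but this is false: in the paper's setup $\widetilde Z_m$ is the radial ground state on $\R^N$, whereas $Z_m$ (short for $Z_{m,\varrho}$, see \eqref{1624}) is the cut-off and $L^2$-renormalised version lying in $H^1_0(\Omega)$. They are genuinely different functions, and the endpoints $(\pm\bar h)\ast Z_m$ prescribed in the definition of $\widetilde{\cD}_0$ are the cut-off ones. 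Consequently the map $h\mapsto I(h\ast Z_m)$ need not have maximum equal to $c_m$; it may overshoot. Your candidate set $D_0$ therefore only gives $\ell_m^0\le \max_h I(h\ast Z_m)$, not $\ell_m^0\le c_m$.

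This is precisely why the hypothesis $\Omega^c\subset B_{\varrho_1}$ and Remark~\ref{rho1} enter, contrary to your last paragraph. The paper's competitor (with $y=0$) first interpolates linearly from $Z_m[0,-\bar h]$ to $\widetilde Z_m[0,-\bar h]$, then follows $h\mapsto \widetilde Z_m[0,h]$ for $h\in[-\bar h,\bar h]$, and finally interpolates back from $\widetilde Z_m[0,\bar h]$ to $Z_m[0,\bar h]$. On the middle arc the maximum of $I$ is exactly $c_m$ by \eqref{1638}, and Remark~\ref{rho1} guarantees that both linear interpolation segments stay strictly below $c_m$ once $\varrho<\varrho_1$. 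Every function along this path is radial, so $\beta\equiv 0$, and the path has the required cut-off endpoints; hence it belongs to $\widetilde{\cD}_0$ with $\max I\le c_m$, which yields $\ell_m^0\le c_m$.
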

\begin{proof}
    The result follows similarly to \cite[Lemma 3.2]{MR4304693}, with the additional consideration of Remark \ref{rho1}. 
    In particular, to show that $c_m\geq \ell_{m}^0$, we consider the energy of the path
$$
\gamma(t):=\ \left\{\begin{array}{ll}
(1-3t)\, {Z}_{m}[y,-\bar h]+3t\, \widetilde{Z}_{m}[y,-\bar h] &t\in[0,1/3]\\
\widetilde {Z}_{m}[y,3(2t-1)\, \bar h] &t\in[1/3,2/3]\\
3(1-t)\, \widetilde {Z}_{m}[y,\bar h]+(3t-2)\, {Z}_{m}[y,\bar h] &t\in[2/3,1].
\end{array}
\right.
$$
\end{proof}

\begin{proposition}
\label{P3.4}
For every exterior domain $\Omega$, we have that 
\begin{equation*}
    L_{\Omega,m}:= \inf_{D \in \mathcal{D}_0} \max_{u \in D} I(u) > c_m.
\end{equation*}
\end{proposition}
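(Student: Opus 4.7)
The plan is to first establish the non-strict inequality $L_{\Omega,m}\geq c_m$ for free, and then argue the strict inequality by contradiction. The non-strict part is immediate from the inclusion $\mathcal{D}_0\subseteq \widetilde{\mathcal{D}}_0$ (any $u\in H^1_0(\Omega)$ extended by $0$ outside $\Omega$ lies in $H^1(\R^N)$, and compactness/connectedness are preserved under this inclusion), together with Lemma \ref{lemma4}, which gives $L_{\Omega,m}\geq \ell_m^0=c_m$.

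For the strict inequality, assume towards a contradiction that $L_{\Omega,m}=c_m$, choose $D_n\in \mathcal{D}_0$ with $\max_{D_n}I\to c_m$ and pick $u_n\in D_n$ realizing the maximum. Then $u_n\in H^1_0(\Omega)\cap S_m$, $\beta(u_n)=0$, and $I(u_n)\to c_m$; the Gagliardo-Nirenberg inequality in the mass-supercritical subcritical range, together with $|u_n|_2=m$, keeps $\{u_n\}$ bounded in $H^1(\R^N)$. I would then run a concentration/splitting analysis in the spirit of \cite{BC87ARMA}: up to a subsequence and translations, $u_n$ decomposes into finitely many profiles $v^j(\cdot - y^j_n)$ concentrating at well-separated points, plus a remainder vanishing in $L^p(\R^N)$. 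The key algebraic input is the scaling identity $c_m=\mu(m)^{\theta}I(U)$ with exponent $\theta=N-2-\tfrac{4}{p-2}\in(-2,0)$ (valid on the whole mass-supercritical subcritical range) and $\mu(m)$ increasing in $m$: this makes $m\mapsto c_m$ strictly decreasing. Combined with the mass balance $\sum_j m_j^2\leq m^2$ and the lower bound $I_\infty(v^j)\geq c_{m_j}$ for each nontrivial profile, the identity $\lim I(u_n)=c_m$ forces a single nontrivial profile of total mass $m$, which must coincide (up to a translation $y_n$) with the ground state $\widetilde Z_m$.

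The barycenter constraint then pins down the translation: by continuity of $\beta$ on $H^1(\R^N)\setminus\{0\}$, the radiality $\beta(\widetilde Z_m)=0$, and the translation identity $\beta(v(\cdot-z))=\beta(v)+z$, the hypothesis $\beta(u_n)=0$ forces $y_n\to 0$. Hence $u_n\to \widetilde Z_m$ strongly in $H^1(\R^N)$. Since $H^1_0(\Omega)$ is closed in $H^1(\R^N)$, this would imply $\widetilde Z_m\in H^1_0(\Omega)$, contradicting $\widetilde Z_m>0$ on $\R^N$ together with $|\Omega^c|>0$.

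The main obstacle is the concentration-compactness step, because $u_n$ is not a priori a Palais-Smale sequence: it only maximizes $I$ on the particular path $D_n$. The two routes I would consider are (i) applying a minmax-Ekeland principle adapted to the non-smooth constraint $\{\beta=0\}$ (via a deformation of $D_n$ inside $\mathcal{D}_0$ that preserves $\beta$) to extract a genuine PS sequence close to $u_n$, or (ii) running a direct profile decomposition using only the upper energy bound and the mass constraint, supplemented by a vanishing lemma to exclude the purely vanishing scenario. Both routes ultimately rely on the uniqueness (modulo translation) of $\widetilde Z_m$ as a critical point at level $c_m$ on $\widetilde S_m$, which $\{\beta=0\}$ pins to the origin.
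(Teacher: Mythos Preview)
Your overall architecture matches the paper's: the inequality $L_{\Omega,m}\ge c_m$ comes from $\mathcal{D}_0\subset\widetilde{\mathcal{D}}_0$ and Lemma~\ref{lemma4}, and the strictness is argued by contradiction, ending with the impossibility of $\widetilde{Z}_m\in H^1_0(\Omega)$. The paper does not work out the middle step itself; it simply invokes \cite[Lemma~3.4]{MR4304693} to assert that from $D_n\in\mathcal{D}_0\subset\widetilde{\mathcal{D}}_0$ with $\max_{D_n}I\to c_m$ one can extract $v_n\in D_n$ with $v_n\to\pm\widetilde{Z}_m$ in $H^1$.

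There is, however, a concrete error in your plan. The claim that Gagliardo--Nirenberg keeps the maximizers $u_n$ bounded in $H^1$ is false in the mass-supercritical range: with $|u|_2=m$ fixed, GN gives only the \emph{upper} bound $|u|_p^p\le C\,|\nabla u|_2^{\alpha}$ with $\alpha=\tfrac{N(p-2)}{2}>2$, so in $I(u)=\tfrac12|\nabla u|_2^2-\tfrac1p|u|_p^p$ the two terms can both diverge while their difference stays near $c_m$. (This is exactly why bounded PS sequences are the central difficulty in the mass-supercritical theory.) Nothing about being a maximizer on $D_n$ rules this out. This already kills route~(ii): without boundedness there is no profile decomposition, and even granting boundedness, the profiles of a sequence that is \emph{not} Palais--Smale need not be critical points, so the inequality $I(v^j)\ge c_{m_j}$ you invoke has no basis.

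Route~(i) is the right direction and is essentially what \cite[Lemma~3.4]{MR4304693} carries out, but it is a deformation argument on the \emph{sets} $D_n$ rather than an Ekeland step at the maximizers: one uses that $\pm\widetilde{Z}_m$ are the only constrained critical points on $\widetilde{S}_m$ at level $c_m$ with $\beta=0$, and shows that if every $D_n$ stayed a fixed distance from $\{\pm\widetilde{Z}_m\}$, a deformation inside $\widetilde{\mathcal{D}}_0$ (preserving $\beta=0$ and the endpoints) would push $\max_{D_n}I$ strictly below $c_m=\ell_m^0$, a contradiction. Note that this produces $v_n\in D_n$ itself, which is essential for the final step since $D_n\subset H^1_0(\Omega)$.
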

\begin{proof}
    First of all notice that $L_{\Omega,m} \geq c_{m}$ by Lemma \ref{lemma4}, because   $\mathcal{D}_0 \subset \mathcal{\widetilde{D}}_0$.
     
    Now, assume by contradiction that there exists a sequence $D_n$ in $ \mathcal{D}_0$ such that
    \begin{equation*} 
        \max_{u \in D_n} I(u) \to c_m.
    \end{equation*}
    
    At this point, arguing exactly as in \cite[Lemma 3.4]{MR4304693}, it is possible to deduce the existence of a   sequence $(v_n)_n $, with $v_n\in D_n$, such that $v_n \to \pm Z_m$ in $H^1_0(\Omega)$. 
    However, this is not possible since it would imply $Z_m =0 $ in $\R^N \setminus \Omega$.
\end{proof}

\begin{remark}
\label{Rem_buco}
Clearly, if $B_{R_i}\subseteq\Omega^c \subseteq B_{R_e}$ it turns out that
$$
 L_{(B_{R_i})^c,m} \le L_{\Omega,m} \le L_{(B_{R_e})^c,m}.
$$
Then, it is readily seen that 
$$
 L_{\Omega,m}\to c_m\qquad\mbox{ as } \Omega^c\subset B_\varrho \mbox{ with }\varrho\to 0.
$$
 
\end{remark}

\begin{proposition} \label{prop3}
    $L_{\Omega,m}\le c_{\Omega,m}$.
\end{proposition}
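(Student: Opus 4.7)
The plan is: given any $\gamma \in \Gamma_{m,\varrho}$, I extract a set $D \in \mathcal{D}_0$ with $\max_D I \le \max_Q I\circ\gamma$, so that taking the infimum over $\gamma$ yields the inequality. The natural candidate is the slice of $\gamma(Q)$ cut out by the barycenter map $\beta$.

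First, I would analyse $\Psi := \beta\circ\gamma : Q \to \R^N$. Since $\widetilde Z_m[y,h] = h\ast\widetilde Z_m(\cdot - y)$ is radial around $y$, the equivariance of $\beta$ gives $\beta(\widetilde Z_m[y,h]) = y$; combining this with Lemma \ref{L3.1}(ii)--(iii) and the continuity of $\beta$, for $\bar R$ large and $\varrho$ small enough, $\Psi$ agrees uniformly on $\partial Q$ with the projection $(y,h)\mapsto y$ up to an arbitrarily small error. In particular $\Psi$ is nowhere zero on the lateral face $\partial B_{\bar R}\times[-\bar h,\bar h]$, and on the horizontal faces its zero set is confined in a small neighbourhood of $y=0$; by radiality of $\vartheta$ and of $h\ast\widetilde Z_m$, one has $\Psi(0,\pm\bar h)=0$ exactly.

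Second, I would invoke a topological linking argument to produce a compact connected set $K \subseteq \Psi^{-1}(0)\cap Q$ meeting both $(0,-\bar h)$ and $(0,\bar h)$. Considering the auxiliary map $F(y,h) := (\Psi(y,h),h) : Q \to \R^{N+1}$, which on $\partial Q$ is $C^0$-close to the inclusion $\partial Q\hookrightarrow \R^{N+1}$, a homotopy-invariance argument gives $\deg(F, \inte Q, (0,h_0)) = 1$ for every $h_0 \in (-\bar h,\bar h)$, so that every horizontal slice of $\Psi^{-1}(0)\cap Q$ is nonempty. Being compact and bounded away from the lateral boundary, $\Psi^{-1}(0)\cap Q$ then admits, via the classical Kuratowski--Whyburn continuum theorem, the desired connected component $K$.

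Third, I set $D_1 := \gamma(K)$: this is a compact connected subset of $\{u\in S_m : \beta(u) = 0\}$ containing $Z_{m,\varrho}[0,\pm\bar h]$, with $\max_{D_1} I \le \max_Q I\circ\gamma$. To reconcile the endpoints with those prescribed in $\mathcal{D}_0$, I append to $D_1$ the two linear interpolation arcs from Remark \ref{rho1}: they consist of radial functions (so $\beta\equiv 0$ is preserved) and carry energy strictly below $c_m$. Since $\max_Q I\circ\gamma \ge c_m$ (otherwise the central slice $h\mapsto \gamma(0,h)$, extended by the same low-energy arcs, would contradict $\ell_m = c_m$ from Lemma \ref{lemma4}), the gluing does not raise the maximum, and the resulting $D\in\mathcal{D}_0$ satisfies $\max_D I \le \max_Q I\circ\gamma$, as required.

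The main obstacle is Step 2: verifying the degree condition on $\partial Q$ (and keeping $\Psi^{-1}(0)$ strictly inside $Q$), and invoking the Kuratowski--Whyburn continuum lemma to extract a single connected component of $\Psi^{-1}(0)\cap Q$ that links the top and bottom faces. The remaining steps are routine consequences of Lemma \ref{L3.1}, the radiality of $\vartheta$, and Remark \ref{rho1}.
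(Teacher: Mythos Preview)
Your strategy is the same as the paper's: for $\gamma\in\Gamma_{m,\varrho}$, study $\Psi=\beta\circ\gamma$, use its boundary behaviour to produce a continuum $K\subset\Psi^{-1}(0)$ linking the two horizontal faces of $Q$, and take $D=\gamma(K)$ (suitably completed) as the element of $\mathcal D_0$. The paper's proof is extremely short because it defers the degree/continuum part to \cite[Proposition~3.5]{MR4304693}; the only new ingredient it records is the boundary information $\beta(Z_m[y,h])=b(|y|,h)\,y$ with $b>0$, obtained ``by symmetric reasons'' (both $\vartheta_\varrho$ and $\widetilde Z_m[y,h]$ are invariant under reflections through any hyperplane containing the axis $\R y$, hence so is $Z_m[y,h]$, and the barycenter of such a function lies on that axis).

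The difference with your argument is precisely this symmetry step versus your approximation step. Using the exact relation $\beta(Z_m[y,h])=b(|y|,h)\,y$ with $b>0$, the zero set of $\Psi$ on $\partial Q$ is \emph{exactly} $\{(0,-\bar h),(0,\bar h)\}$, so any continuum in $\Psi^{-1}(0)$ joining the top and bottom faces automatically passes through these two points, and $\gamma(K)$ already contains the prescribed endpoints of $\mathcal D_0$. Your approximation via Lemma~\ref{L3.1} only confines the zeros of $\Psi|_{\partial Q}$ to a small neighbourhood of $y=0$, so the continuum may land on the horizontal faces at points $(y^\pm,\pm\bar h)$ with $y^\pm\neq 0$. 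This breaks your Step~3: the interpolation arcs from Remark~\ref{rho1} consist of radial functions (hence have $\beta=0$) only when $y=0$; for $y^\pm\neq 0$ they leave the set $\{\beta=0\}$ and you cannot glue them to $D_1$ while staying in $\mathcal D_0$.

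The fix is immediate once you add the symmetry observation: the axial symmetry forces $\beta(Z_m[y,\pm\bar h])=b\,y$ for some scalar $b$, and your approximation $\beta(Z_m[y,\pm\bar h])\approx y$ then shows $b$ is close to $1$, hence positive, so $\Psi^{-1}(0)\cap\partial Q=\{(0,\pm\bar h)\}$ exactly. With this in place your continuum argument and Step~3 go through as written.
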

\begin{proof}
By symmetric reasons, we see that $\beta(Z_m[0, h])=0$, $\forall h\in\R$, and for every $(y,h)\in Q$, with $y\neq 0$, there exists $b(|y|,h)>0$ such that $\beta(Z_m[y, h])=b(|y|,h)\, y$.
    Then, the proof goes on arguing as the proof of  \cite[Proposition 3.5]{MR4304693}
\end{proof}

\begin{proposition} \label{prop2}
For any  $\varepsilon > 0$  it is possible to find  $\bar{R} > 0$,   $\eta_\e \in(0,1)$ and ${\varrho_\e}\in (0,{\varrho_1}]$ (see Remark \ref{rho1}) such that for  $Q$ as in \eqref{EQ}   and $\Omega^c\subset B_{\varrho_\e}$ the following holds 
\begin{equation*}
\max_{(y, h) \in \partial Q} I_\eta({Z}_{m}[y,h]) < c_{m,\eta} + \epsilon,\qquad \forall \eta\in (\eta_\e,1].
\end{equation*}
\end{proposition}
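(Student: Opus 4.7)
The plan is to decompose $\partial Q$ into the two ``caps'' $\overline{B}_{\bar R}\times\{\pm\bar h\}$ and the lateral side $\partial B_{\bar R}\times[-\bar h,\bar h]$, and to estimate $I_\eta(Z_{m,\varrho_\e}[y,h])$ on each piece using one of the two convergence results of Lemma~\ref{L3.1}: convergence as $\varrho\to 0$ uniform in $y$ for the caps, and convergence as $|y|\to\infty$ uniform in $h$ for the lateral part. Translation invariance of $I_\eta$ reduces every estimate to one for the one-parameter scaling path $h\mapsto I_\eta(h\ast\widetilde Z_m)$. The parameters will be chosen in the order: first $\varrho_\e\in(0,\varrho_1]$ small, then $\bar R$ large (relative to $\varrho_\e$), and finally $\eta_\e$ close to $1$.

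\textbf{Handling the caps.} On $\overline{B}_{\bar R}\times\{\pm\bar h\}$, Lemma~\ref{L3.1}(ii) together with translation invariance of $I_\eta$ yields $I_\eta(Z_{m,\varrho}[y,\pm\bar h])\to I_\eta(\pm\bar h\ast\widetilde Z_m)$ as $\varrho\to 0$, uniformly in $y\in\R^N$ and, by continuity of $\eta\mapsto I_\eta$, uniformly in $\eta\in[1/2,1]$. By \eqref{1847} the limiting values stay strictly below $c_m$ with a gap that is uniform in $\eta\in[1/2,1]$. Hence choosing $\varrho_\e$ small enough gives a bound $<c_m+\e/2$ on the caps, valid for every $\eta\in[1/2,1]$.

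\textbf{Handling the lateral side.} For $\bar R$ large, Lemma~\ref{L3.1}(iii) and translation invariance give
\[
I_\eta(Z_{m,\varrho_\e}[y,h])\le I_\eta(h\ast\widetilde Z_m)+\e/4
\]
uniformly in $|y|=\bar R$, $h\in[-\bar h,\bar h]$, $\eta\in[1/2,1]$ and $\Omega^c\subset B_{\varrho_\e}$. The key fact I would invoke is that the path $h\mapsto I(h\ast\widetilde Z_m)$ attains its strict global maximum exactly at $h=0$, with value $c_m$. Indeed, from
\[
I_\eta(h\ast u)=\tfrac{e^{2h}}{2}|\nabla u|_2^2-\tfrac{\eta}{p}e^{N(p-2)h/2}|u|_p^p,
\]
the Nehari identity for $\widetilde Z_m$ shows that $h=0$ is a critical point at $\eta=1$, and the mass-supercritical condition $N(p-2)/2>2$ forces it to be the unique critical point and a strict global maximum. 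Consequently $I(h\ast\widetilde Z_m)\le c_m$ for every $h\in\R$, and the identity $I_\eta=I+\frac{1-\eta}{p}|\cdot|_p^p$ produces
\[
I_\eta(h\ast\widetilde Z_m)\le c_m+K(\bar h)(1-\eta),\qquad h\in[-\bar h,\bar h].
\]
Picking $\eta_\e$ close enough to $1$ makes the right-hand side $<c_m+\e/2$.

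\textbf{Conclusion and main obstacle.} Since $c_{m,\eta}\ge c_m$ for every $\eta\in(0,1]$ by \eqref{eq_decr}, combining the two estimates gives $I_\eta(Z_{m,\varrho_\e}[y,h])<c_m+\e\le c_{m,\eta}+\e$ on the whole $\partial Q$ for every $\eta\in(\eta_\e,1]$. The main obstacle is the lateral side: there $h$ ranges over the full compact interval $[-\bar h,\bar h]$, so one must control the scaling path $I_\eta(h\ast\widetilde Z_m)$ uniformly in $h$. The fact that $h=0$ is the global maximum of this path (not merely a local maximum or a saddle) crucially relies on the mass-supercritical assumption $p>2+4/N$; this is exactly where the hypothesis on $p$ enters the construction of the linking geometry.
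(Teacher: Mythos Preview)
Your proposal is correct and follows essentially the same approach as the paper: decompose $\partial Q$ into the two caps and the lateral cylinder, use Lemma~\ref{L3.1}(ii) together with \eqref{1847} on the caps and Lemma~\ref{L3.1}(iii) together with the fact that $\max_{h}I(h\ast\widetilde Z_m)=c_m$ on the lateral part, and pass from $I$ to $I_\eta$ via the identity \eqref{1806} combined with $c_{m,\eta}\ge c_m$ from \eqref{eq_decr}. The only cosmetic difference is the order in which you fix the parameters (you choose $\varrho_\e$ first, then $\bar R$, then $\eta_\e$, whereas the paper first fixes $\bar R$ working with $\varrho_1$ and only afterwards shrinks to $\varrho_\e$), but since Lemma~\ref{L3.1}(ii) is uniform in $y$ and Lemma~\ref{L3.1}(iii) is uniform in $h$ and in $\Omega^c\subset B_\varrho$, either order works.
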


\begin{proof}
If $\Omega^c\subset B_{\varrho_1}$, then, by using Lemma \ref{L3.1} $(iii)$, we see 
$$
\lim_{R\to\infty}\sup\{I({Z}_{m}[y,h])\ :\ |y|=R,\ |h|\le \bar h\}=\max_{h\in[-\bar h,\bar h]}I(\widetilde Z_m[0,h])=  
I(\widetilde Z_m[0,0])=c_m.
$$
So, we obtain the existence of $\bar R>0$ and $\hat\eta\in(0,1)$ such that
\beq
\label{1759}
\max\{I_\eta ({Z}_{m}[y,h])\ : \ |y|=\bar R,\ |h|\le\bar h\}< c_{m,\eta} + \epsilon, \qquad \forall\eta\in(\hat\eta,1],
\eeq
because of
\begin{equation}
\label{1806}
    I_\eta(u)= I(u) +\frac{1-\eta}{p} \int | u |^p \, dx,\qquad \forall u\in H^1(\R^N).
\end{equation}

\smallskip

Now, by \eqref{1847}, Lemma \ref{L3.1} $(ii)$ and \eqref{1806} there exist $\varrho_\e\in (0,\varrho_1]$ and $\eta_\e\in(0, \hat\eta]$ such that if $\Omega^c\subset B_{\varrho_\e}$ then
\beq
\label{1817}
\max\{I_\eta ({Z}_{m}[y,h])\ : \ |y|\leq\bar R,\ |h|=\bar h\}< c_{m,\eta} + \epsilon, \qquad \forall\eta\in(\eta_\e,1].
\eeq

\smallskip

Then, the statement follows from \eqref{1759} and \eqref{1817}
 \end{proof}
 
 \begin{proposition}
\label{P3.8}
There exists $\varrho_2\in(0,\varrho_1]$ such that if $\Omega^c\subset B_{\varrho_2}$, then we can find $\eta_{\varrho_2}$ such that for a.e. $\eta \in \left( \eta_{\varrho_2},1\right]$ there is a bounded Palais-Smale sequence $(u_n)_n$ for $I_\eta$ at the level $c_{\Omega,m}^\eta$, with $u_n\geq 0$, $\forall n\in\N$.
Moreover, there exists $(\zeta_n)_n$ in $\R$  with $\zeta_n\searrow 0$ such that the following estimate of the approximate Morse index holds
 \beq
 \label{1827}
 \widetilde{i}_{\zeta_n}(u_n)\leq N+1.
 \eeq
\end{proposition}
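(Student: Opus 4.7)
The plan is to apply the constrained version of Struwe's monotonicity trick developed by Borthwick, Chang, Jeanjean and Soave in \cite{MR4748624} to the $(N+1)$-parameter min-max family $\Gamma_{m,\varrho}$. The preliminary observation is that \eqref{1806} may be rewritten as
\[
I_\eta(u)-I_{\eta'}(u)=\frac{\eta'-\eta}{p}\int|u|^p\,dx,
\]
so for each fixed $u$ the map $\eta\mapsto I_\eta(u)$ is nonincreasing. Because $\Gamma_{m,\varrho}$ does not depend on $\eta$, this monotonicity is preserved under $\max_Q$ and $\inf_{\Gamma_{m,\varrho}}$, hence $\eta\mapsto c^\eta_{\Omega,m}$ is nonincreasing on $(0,1]$ and therefore differentiable at a.e.\ such $\eta$.

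To activate the monotonicity trick I need a strict linking gap, uniform for $\eta$ close to $1$. Fix $\varepsilon>0$ with $L_{\Omega,m}>c_m+3\varepsilon$, which is possible by Proposition \ref{P3.4}. By Proposition \ref{prop2} there exist $\bar R>0$, $\varrho_\varepsilon\in(0,\varrho_1]$ and $\eta_\varepsilon\in(0,1)$ such that, for $\Omega^c\subset B_{\varrho_\varepsilon}$,
\[
\max_{(y,h)\in\partial Q}I_\eta(Z_m[y,h])<c_{m,\eta}+\varepsilon,\qquad\forall\eta\in(\eta_\varepsilon,1].
\]
On the other hand, the $\eta$-monotonicity just established and Proposition \ref{prop3} give $c^\eta_{\Omega,m}\ge c^{1}_{\Omega,m}\ge L_{\Omega,m}$ for every $\eta\le 1$, while $c_{m,\eta}\to c_m$ by \eqref{eq_decr}. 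Setting $\varrho_2:=\varrho_\varepsilon$ and choosing $\eta_{\varrho_2}\in[\eta_\varepsilon,1)$ sufficiently close to $1$ that $c_{m,\eta}<c_m+\varepsilon$ on $(\eta_{\varrho_2},1]$, one obtains
\[
c^\eta_{\Omega,m}>c_{m,\eta}+2\varepsilon>\max_{(y,h)\in\partial Q}I_\eta(Z_m[y,h])+\varepsilon,\qquad\forall\eta\in(\eta_{\varrho_2},1].
\]

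Under this strict linking the constrained monotonicity trick of \cite{MR4748624} applies on the manifold $S_m\subset H^1_0(\Omega)$. At each $\eta\in(\eta_{\varrho_2},1]$ at which $\eta\mapsto c^\eta_{\Omega,m}$ is differentiable, a quasi-minimizing path $\gamma_\eta\in\Gamma_{m,\varrho}$ together with a gradient-flow deformation on $S_m$ yields a Palais--Smale sequence $(u_n)_n\subset S_m$ for $I_\eta$ at level $c^\eta_{\Omega,m}$ whose $H^1$-norm is controlled \emph{a priori} by the local slope of $\eta\mapsto c^\eta_{\Omega,m}$; this is what supplies the boundedness unavailable through Jeanjean's scaling argument in an exterior domain. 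Nonnegativity of $u_n$ follows from the facts that $u\mapsto|u|$ is continuous on $H^1_0(\Omega)$, preserves the $L^2$-norm, satisfies $I_\eta(|u|)\le I_\eta(u)$, and sends $\Gamma_{m,\varrho}$ into itself since $Z_{m,\varrho}[y,h]\ge 0$; so the infimum may be restricted to nonnegative paths and the deformation chosen to preserve this sign.

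The approximate Morse index bound \eqref{1827} reflects $\dim Q=N+1$. Running the quantitative deformation with approximate Morse index from \cite{MR4748624} (in the spirit of Ghoussoub's $C^1$-manifold min-max theory), if for some $\zeta>0$ every candidate in a neighborhood of level $c^\eta_{\Omega,m}$ had $\zeta$-approximate Morse index strictly larger than $N+1$, one could construct a deformation of any quasi-minimizing $\gamma$ that pushes $\max_Q I_\eta\circ\gamma$ below $c^\eta_{\Omega,m}$ while fixing $\gamma|_{\partial Q}$, contradicting the definition of $c^\eta_{\Omega,m}$. The principal obstacle is to execute this Morse-index-aware deformation simultaneously with, and compatibly with, the monotonicity trick on $S_m$, so that boundedness, nonnegativity and the estimate $\widetilde{i}_{\zeta_n}(u_n)\le N+1$ (with $\zeta_n\searrow 0$) are all delivered by a single PS sequence at a.e.\ $\eta\in(\eta_{\varrho_2},1]$; this is precisely the content of the abstract constrained min-max framework of \cite{MR4748624} to which we appeal.
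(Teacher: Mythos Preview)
Your proposal is correct and follows essentially the same route as the paper: establish the strict linking gap $\sup_{\partial Q}I_\eta(Z_m[y,h])<L_{\Omega,m}\le c_{\Omega,m}\le c_{\Omega,m}^\eta$ via Propositions \ref{P3.4}, \ref{prop3} and \ref{prop2}, and then invoke the abstract constrained monotonicity/Morse-index machinery of \cite{MR4748624}. You are somewhat more explicit than the paper about the monotonicity of $\eta\mapsto c^\eta_{\Omega,m}$, the positivity of the boundary maps, and the reason the Morse bound is $N+1=\dim Q$, whereas the paper simply cites \cite[Theorem 1.10, Remarks 1.8 and 1.11]{MR4748624} for these points.
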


\begin{proof}
    By \eqref{eq_decr} and Proposition \ref{P3.4} we obtain $L_{\Omega,m} > c_{m,\eta} > c_m$, if $\eta$ is sufficiently close to $1$.
    
    Now, by taking $\varepsilon$ small enough in Proposition \ref{prop2}, it is possible to find a $Q$, independent of $\eta$ near 1, such that if $\Omega^c\subset B_{\varrho_2}$, for $\varrho_2$ small,
    \begin{equation*}
        \sup_{(y, h) \in \partial Q} I_\eta( {Z}_{m}[y,h]) < L_{\Omega,m} \leq c_{\Omega,m} \leq c_{\Omega,m}^\eta,
    \end{equation*}
where the the second inequality follows from Proposition \ref{prop3}, and the last inequality is held by \eqref{1806}.

At this point, the proof of the proposition concludes applying \cite[Theorem 1.10]{MR4748624}, also taking into account \cite[Remarks 1.8 and 1.11]{MR4748624}.
\end{proof}
   

\section{Analysis of the weak limit}


We are using the following

\begin{notat}
    \label{ueta}
Let $\Omega^c\subset B_{\varrho_2}$ and $\eta\in(\eta_\Omega,1]$ as in Proposition \ref{P3.8}, 
we denote by
\beq
\label{1052}
A_\Omega:=\{ \eta\in (\eta_{\varrho_2},1)\  :\   \mbox{ there exists a bounded Palais Smale sequence, $(u_n)_n$ at the level }c^\eta_{\Omega,m}\}.
\eeq
Moreover, we fix $\eta\in A_\Omega$ and then call $(u_n)_n$ a corresponding bounded Palais Smale sequence at level $c^\eta_{\Omega,m}$, with $u_n\geq 0$.
We assume that $u_n\rightharpoonup u_\eta$, for a suitable $u_\eta\in H^1_0(\Omega)$.
\end{notat}

\begin{proposition} \label{prop1}
  Within the context of Notation \ref{ueta}, the function $u_\eta$ satisfies 
  \begin{equation} 
  \label{1149}
  -\Delta u_\eta + \lambda_\eta u_\eta= \eta u_\eta^{p-1},
    \end{equation}
   for some $\lambda_\eta \geq  0$. 
\end{proposition}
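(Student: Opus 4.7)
The proof has three steps: (i) extract a bounded sequence of Lagrange multipliers, (ii) pass to the weak limit in the equation, and (iii) show $\lambda_\eta \geq 0$. The first two are routine, while (iii) is the main obstacle, and is where the approximate Morse index information from Proposition~\ref{P3.8} enters crucially.

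For (i)--(ii), since $(u_n)$ is a bounded PS sequence for $I_\eta|_{S_m}$, the Lagrange multiplier rule produces $\lambda_n \in \R$ satisfying $-\Delta u_n + \lambda_n u_n - \eta\, u_n^{p-1} \to 0$ in $H^{-1}(\Omega)$. Testing this against $u_n$ and using $|u_n|_2 = m$ together with the $H^1$-boundedness of $(u_n)$ yields a uniform bound on $\lambda_n$, so up to a subsequence $\lambda_n \to \lambda_\eta \in \R$. Passing to the limit in the weak formulation against $\varphi \in C_c^\infty(\Omega)$, using the local Rellich compactness available since $p<2N/(N-2)$ together with $u_n \rightharpoonup u_\eta$ in $H^1_0(\Omega)$, then gives \eqref{1149}, and $u_\eta\geq 0$ follows from the a.e.\ convergence of the nonnegative $(u_n)$.

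For (iii), I argue by contradiction: if $\lambda_\eta < 0$, the plan is to build an $(N+2)$-dimensional subspace of $T_{u_n}S_m$ on which $D^2 I_\eta(u_n)[\cdot,\cdot] < -\zeta_n\,\|\cdot\|^2$, contradicting Proposition~\ref{P3.8} since $\zeta_n \searrow 0$. Fix $N+2$ smooth bumps $\psi_1,\dots,\psi_{N+2}$ with pairwise disjoint compact supports in $\R^N \setminus \overline{B}_1$, and consider the $L^2$-preserving dilations $\psi_{j,R}(x) := R^{-N/2}\psi_j(x/R)$, whose supports escape to infinity so $\psi_{j,R}\in H^1_0(\Omega)$ for $R$ large, with $|\nabla\psi_{j,R}|_2\to 0$ and $|\psi_{j,R}|_p\to 0$ as $R\to\infty$. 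Project onto $T_{u_n}S_m$ by $\widetilde\psi_{j,R} := \psi_{j,R} - m^{-2}\bigl(\int u_n\psi_{j,R}\,dx\bigr)u_n$; the disjointness of supports makes the $\widetilde\psi_{j,R}$ linearly independent for $R$ large.

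On the span of $\widetilde\psi_{1,R},\dots,\widetilde\psi_{N+2,R}$, the quadratic form is controlled as follows: diagonally, $D^2 I_\eta(u_n)[\psi_{j,R},\psi_{j,R}] \to \lambda_n |\psi_j|_2^2$ uniformly in $n$ (kinetic term vanishes by scaling, the $|u_n|^{p-2}$ part by Hölder), contributing negativity of order $|\lambda_\eta|$; off-diagonal $\psi_{j,R}$-$\psi_{k,R}$ entries vanish by disjointness of supports; the cross terms $D^2 I_\eta(u_n)[\psi_{j,R},u_n]$ reduce via the PS identity for $u_n$ to $-(p-2)\eta\int u_n^{p-1}\psi_{j,R}\,dx + o(1)$, which vanishes as $R\to\infty$ again by Hölder scaling; finally, the $u_n$-$u_n$ correction equals $-(p-2)\eta\,|u_n|_p^p + o(1) \leq 0$, which only helps. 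These contributions combine to a uniform negativity gap on the $(N+2)$-dimensional subspace for $R,n$ large, contradicting the bound $\widetilde{i}_{\zeta_n}(u_n)\leq N+1$ and forcing $\lambda_\eta\geq 0$.
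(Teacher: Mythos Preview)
Your argument is correct and rests on the same core idea as the paper: exploit dilated bumps with vanishing Dirichlet energy to manufacture, under the hypothesis $\lambda_\eta<0$, a subspace of dimension larger than the approximate Morse index bound allows. The organization, however, is genuinely different.

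The paper separates the argument into two auxiliary results. First, Lemma~\ref{lemma3} constructs, for any $\lambda<0$ and any $k\in\N$, a fixed $k$-dimensional subspace $Y\subset H^1_0(\Omega)$ (built from translated dilations $h\ast\varphi(\cdot-r_je_1)$ with disjoint supports) on which $\int|\nabla u|^2+\lambda\int u^2\le \tfrac{\lambda}{2}\|u\|^2$; since $I''_\eta(u_n)[\varphi,\varphi]\le\int|\nabla\varphi|^2$, this immediately gives the uniform negativity of $I''_\eta(u_n)[\varphi,\varphi]+\lambda|\varphi|_2^2$ on $Y$, with no dependence on $n$ and no use of the PS relation. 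Second, Lemma~\ref{lemma2} is an abstract comparison: if any $Y_n\subset H^1_0(\Omega)$ with the $\zeta_n$-negativity has $\dim Y_n\le k-1$, and some $Y$ of dimension $k$ satisfies the negativity with parameter $\lambda$, then $\lambda_n>\lambda$. The key simplification is that the paper never projects onto $T_{u_n}S_m$: it invokes $\operatorname{codim}T_{u_n}S_m=1$ once to pass from $\widetilde i_{\zeta_n}(u_n)\le N+1$ to the bound $k-1=N+2$ for subspaces of the ambient space, and then applies Lemma~\ref{lemma3} with $k=N+3$.

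Your route instead takes $N+2$ bumps, projects them onto $T_{u_n}S_m$, and controls the resulting cross terms with $u_n$ via the PS identity. This works, but buys two extra tasks: you must track the $u_n$-$u_n$ and $u_n$-$\psi_{j,R}$ contributions, and you must justify linear independence of the projected family. For the latter, ``disjointness of supports'' alone is not enough after projection; what actually makes it work is that $c_{j,n,R}=m^{-2}\int u_n\psi_{j,R}\to 0$ as $R\to\infty$ for each fixed $n$ (since $\operatorname{supp}\psi_{j,R}\subset\{|x|>R\}$ and $u_n\in L^2$), so for $R=R(n)$ large the projections are perturbations of the original bumps. The paper's device of adding one extra dimension in the ambient space sidesteps this entirely.
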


Before proving Proposition \ref{prop1}, we state two lemmas. 
The first generalizes \cite[Lemma 3.2]{MR4601303} to the case of suitable subspaces of $H^1_0(\Omega)$ of any given dimension $k\in\N$.

\begin{lemma} \label{lemma3}
    For any $\lambda <0$ and  $k \in \N$, there exists a subspace $Y \subset H_0^1(\Omega)$ with $\dim Y=k$ such that
    \begin{equation*}
        \int_\Omega |\nabla u|^2 \, dx + \lambda \int_\Omega |u|^2 \, dx \leq \frac{\lambda}{2} \Vert u \Vert^2 \quad \forall \ u \in Y.
    \end{equation*}
\end{lemma}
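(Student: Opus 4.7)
The plan is to reformulate the required inequality purely as a Rayleigh-quotient bound, and then construct $Y$ by placing $k$ scaled bumps with pairwise disjoint supports deep inside $\Omega$. Since $\|u\|^2=\int|\nabla u|^2+\int u^2$, the desired inequality
$$\int_\Omega|\nabla u|^2+\lambda\int_\Omega|u|^2\le\frac{\lambda}{2}\|u\|^2$$
rearranges to
$$\Bigl(1-\tfrac{\lambda}{2}\Bigr)\int|\nabla u|^2\le-\tfrac{\lambda}{2}\int|u|^2,$$
and since $\lambda<0$ implies $1-\lambda/2>0$ and $-\lambda/2>0$, this is equivalent to the Rayleigh-quotient bound
$$\frac{\int|\nabla u|^2}{\int|u|^2}\le\alpha_\lambda:=\frac{-\lambda}{2-\lambda}>0.$$

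The task therefore reduces to exhibiting a $k$-dimensional subspace of $H_0^1(\Omega)$ on which the Rayleigh quotient is at most $\alpha_\lambda$. For this I would exploit that $\Omega$ is exterior, so $\Omega^c$ is compact and one has unlimited room to place large bumps. Fix a non-zero $\phi\in C_c^\infty(\R^N)$ with $\supp\phi\subset B_1(0)$ and set $\phi_R(x):=\phi(x/R)$. A direct change of variables gives
$$\frac{\int|\nabla\phi_R|^2}{\int|\phi_R|^2}=\frac{1}{R^2}\,\frac{\int|\nabla\phi|^2}{\int|\phi|^2},$$
which can be made $\le\alpha_\lambda$ by choosing $R$ large enough.

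To obtain a $k$-dimensional subspace, I would then pick $k$ centers $y_1,\dots,y_k\in\R^N$ so that the balls $B_R(y_i)$ are pairwise disjoint and all contained in $\Omega$; this is possible because $\Omega^c$ is bounded, so the $y_i$ can be taken on a ray sufficiently far from the origin. Define $\phi_i(x):=\phi_R(x-y_i)\in H_0^1(\Omega)$ and set $Y:=\spann\{\phi_1,\dots,\phi_k\}$. The pairwise disjointness of supports ensures both $\dim Y=k$ and, for every $u=\sum c_i\phi_i$,
$$\int|\nabla u|^2=\sum_{i=1}^k c_i^2\int|\nabla\phi_i|^2,\qquad \int|u|^2=\sum_{i=1}^k c_i^2\int|\phi_i|^2.$$
Consequently the Rayleigh quotient of $u$ coincides with that of any single $\phi_i$ and is $\le\alpha_\lambda$, which, by the equivalence at the start, yields the claim.

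There is no real obstacle here: the only thing to verify is the geometric fact that $k$ disjoint balls of a prescribed radius $R$ can be packed inside $\Omega$, which is immediate since $\Omega^c$ is compact. The role played by $\lambda<0$ is solely to make $\alpha_\lambda$ strictly positive, so that the scaling argument $R\to\infty$ actually produces admissible test functions.
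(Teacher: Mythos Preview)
Your proof is correct and follows essentially the same approach as the paper: both arguments place $k$ scaled bump functions with pairwise disjoint supports inside the exterior domain and use that the disjointness makes the quadratic form diagonal. The only cosmetic differences are that you first rewrite the inequality as a Rayleigh-quotient bound and use the dilation $\phi(x/R)$, whereas the paper keeps the inequality in its original form and uses the $L^2$-preserving scaling $h\ast\varphi$; these are equivalent parametrizations of the same construction.
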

\begin{proof}
  Let us fix $\varphi \in C_0^\infty(\R^N)$ such that $\supp \varphi \subset  B_2(0)$ and $|\varphi |_2 =1$. 
  Set $\alpha=|\nabla \varphi|_2$ and select  $h<0$ small enough to get
  \begin{equation} \label{eq5}
      \frac{e^{2h} \alpha+ \lambda}{e^{2h} \alpha+ 1} \leq \frac{\lambda}{2}.
  \end{equation}
  We now define the functions
  \begin{equation*}
      \varphi_j (x) := h \ast \varphi (x-r_j e_1) \quad j=1,...,k
  \end{equation*}
  where $e_1=(1,0,\ldots,0)\in\R^N$ and  $r_j$, $j=1,\ldots,k$, are such that the supports of $\varphi_j$ are disjoint and $\supp \varphi_j \subset \Omega$.
    Notice that this is possible being $ \Omega^c$ bounded. 
    
    We define $Y=\spann\{\varphi_j\, :\, j=1,\ldots,k\}$ and we claim that $Y$ verifies our statement.
    Indeed, let $u=\sum_{j=1}^k \tau_j \varphi_j\in Y$, then
  \begin{align} \label{eq6}
      \int_\Omega |\nabla u|^2 \, dx + \lambda \int_\Omega |u|^2 \, dx &= \sum_{j=1}^k { \tau_j^2} \left( e^{2h} \int_\Omega |\nabla { \varphi}|^2 \, dx + \lambda \int_\Omega |{ \varphi}|^2 \, dx\right) \notag \\
      & = \left( { e^{2h}}  \alpha + \lambda \right)\sum_{j=1}^k { \tau_j^2} .
  \end{align}
  With a similar computation, it turns out that
  \begin{equation} \label{eq7}
      \Vert u \Vert^2= \left( { e^{2h}}  \alpha + 1 \right)\sum_{j=1}^k  { \tau_j^2}. 
  \end{equation}
  Coupling \eqref{eq6} and \eqref{eq7}, we get
  \begin{equation*}
  \frac{\int_\Omega |\nabla u|^2 \, dx + \lambda \int_\Omega |u|^2 \, dx}{\Vert u \Vert^2}= \frac{e^{2h} \alpha + \lambda }{e^{2h}\alpha + 1 } \leq \frac{\lambda}{2},
  \end{equation*}
  where we used \eqref{eq5} in the last inequality. 
\end{proof}

\begin{lemma} \label{lemma2} 
    Let $(u_n)_n \subset S_m$, $(\lambda_n)_n \subset \R$, $(\zeta_n)_n \subset \R_+$ with $\zeta_n \searrow 0$, $k \in \N$   and $\eta>0$. 
    Suppose that the following facts hold:
    \begin{description}
        \item[$(i)$] 
        if $Y_n \subset H^1_0(\Omega)$ is a subspace such that
        \begin{equation*}
            \left(I_\eta \right)''(u_n)\left[\varphi,\varphi \right] + \lambda_n \Vert \varphi \Vert_2^2 <  -\zeta_n \Vert \varphi \Vert^2 \quad \forall \ \varphi \in Y_n,
        \end{equation*}
        then $\dim Y_n \leq k-1$ for $n$ large enough;
        \item[$(ii)$] 
        there exist $\lambda \in \R$, a subspace $Y \subset H^1_0(\Omega)$, with $\dim Y \geq k$, and $a>0$ such that for $n$ large enough we have 
        \begin{equation*}
            \left(I_\eta \right)''(u_n)\left[\varphi,\varphi \right] + \lambda \Vert \varphi \Vert_2^2 < -a \Vert \varphi \Vert^2 \quad \forall \ \varphi \in Y;
        \end{equation*}
    \end{description}
    Then we have $\lambda_n  >  \lambda$ for $n$ large enough. 
    In particular, if the inequality in $(ii)$ holds for any $\lambda <0$, we have $\liminf_{n \to \infty} \lambda_n \geq 0$.
\end{lemma}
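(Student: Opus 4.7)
The plan is to argue by contradiction, exploiting the monotonicity in $\lambda$ of the quadratic form $(I_\eta)''(u_n)[\varphi,\varphi] + \lambda\|\varphi\|_2^2$ and the fact that $\zeta_n$ eventually becomes smaller than the gap $a$ provided by hypothesis (ii).

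Suppose, toward a contradiction, that $\lambda_n \le \lambda$ along some subsequence (still denoted $\lambda_n$). Take the subspace $Y$ from hypothesis (ii), which has $\dim Y \ge k$. For every $\varphi \in Y\setminus\{0\}$, since $\|\varphi\|_2^2 \ge 0$ and $\lambda_n \le \lambda$, I would write
\[
(I_\eta)''(u_n)[\varphi,\varphi] + \lambda_n\|\varphi\|_2^2 \;\le\; (I_\eta)''(u_n)[\varphi,\varphi] + \lambda\|\varphi\|_2^2 \;<\; -a\|\varphi\|^2.
\]
Since $\zeta_n\searrow 0$, for $n$ large enough we have $\zeta_n < a$, and therefore
\[
(I_\eta)''(u_n)[\varphi,\varphi] + \lambda_n\|\varphi\|_2^2 \;<\; -\zeta_n\|\varphi\|^2 \qquad \forall\,\varphi\in Y\setminus\{0\}.
\]
Thus $Y$ qualifies as an admissible subspace $Y_n$ in hypothesis (i), giving $\dim Y \le k-1$, which contradicts $\dim Y \ge k$. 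Hence $\lambda_n > \lambda$ for all sufficiently large $n$.

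For the final assertion, I would apply the statement already proved to each fixed $\lambda<0$: the hypothesis that the strict inequality in (ii) holds for every negative $\lambda$ yields $\lambda_n > \lambda$ eventually, hence $\liminf_{n\to\infty}\lambda_n \ge \lambda$. Letting $\lambda \nearrow 0$ through negative values delivers $\liminf_{n\to\infty}\lambda_n \ge 0$.

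There is no real obstacle here; the lemma is essentially a bookkeeping argument that converts quantitative Morse-type information (the $\zeta_n$-approximate negativity of the second differential) into a lower bound on the Lagrange multipliers. The only subtlety worth double-checking is that the $\lambda$-dependent shift can genuinely be absorbed into the $\zeta_n$-estimate, which requires precisely that $\|\varphi\|_2^2 \ge 0$ and that $a$ is independent of $n$.
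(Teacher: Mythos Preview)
Your proof is correct and follows essentially the same contradiction argument as the paper: assume $\lambda_n\le\lambda$ along a subsequence, use monotonicity in $\lambda$ of the shifted quadratic form together with $\zeta_n<a$ for large $n$ to make $Y$ an admissible subspace in (i), and derive a contradiction with $\dim Y\ge k$. Your derivation of the final assertion by letting $\lambda\nearrow 0$ is also correct (the paper leaves this implicit).
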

\begin{proof}
    Arguing by contradiction, we suppose that there exists a subsequence, still denoted with $\lambda_n$, such that $\lambda_n \leq \lambda$. 
    If so, we would have
    \begin{align*}
        \left(I_\eta \right)''(u_n)\left[\varphi,\varphi \right] + \lambda_n \Vert \varphi \Vert_2^2 & =\left(I_\eta \right)''(u_n)\left[\varphi,\varphi \right] + \lambda \Vert \varphi \Vert_2^2+\left( \lambda_n-\lambda\right)  \Vert \varphi \Vert_2^2 \\
        & \leq -a \Vert \varphi \Vert +\left( \lambda_n-\lambda\right)  \Vert \varphi \Vert_2^2
    \end{align*}
    for all $\varphi \in Y \setminus \left\{0 \right\}$. 
    Now, since $\zeta_n \searrow 0$, we finally have $\zeta_n < a$. 
    Using this and the contradiction hypothesis in the inequality above, we obtain
    \begin{equation*}
        \left(I_\eta \right)''(u_n)\left[\varphi,\varphi \right] + \lambda_n \Vert \varphi \Vert_2^2 <-\zeta_n \Vert \varphi \Vert, 
    \end{equation*}
    for $n$ large enough, for all $\varphi \in Y \setminus \left\{0 \right\}$ with $\dim Y \geq k$. 
    However, this is not admissible because it would violate $(i)$. 
\end{proof}

\medskip

{\em Proof of Proposition \ref{prop1}.}\ 
 Applying \cite[Lemma 3]{MR695536}, we obtain
\begin{equation} 
\label{1356}    I'_\eta(u_{n}) - \lambda_{n} u_{n} \to 0 \quad \text{in} \ \left(H^1_0(\Omega)\right)^*,
\end{equation}
where 
\begin{equation*}
    \lambda_n := \frac{I'_\eta(u_{n}) u_{n}}{ m^2}.
\end{equation*}
We note that \(\lambda_n\) is also bounded, so we can assume \(\lambda_n \to \lambda_\eta\). 
As a consequence, \(u_\eta\) weakly solves
\begin{equation*}
    -\Delta u_\eta + \lambda_\eta u_\eta = \eta \, u_\eta^{p-1}.
\end{equation*}

Next, since \(\widetilde{i}_{\zeta_n}(u_{n}) \leq N+1\) and \(\codim T_{u_{n}} S_m = 1\), the inequality in \((i)\) of Lemma \ref{lemma2} is satisfied for subspaces of \(H^1_0(\Omega)\) with dimension not greater than \(N+2\). 
Furthermore, Lemma \ref{lemma3} implies that the condition \((ii)\) in Lemma \ref{lemma2} is also verified for any $\lambda<0$. 
Therefore, by Lemma \ref{lemma2}, we conclude \(\lambda_\eta \geq 0\).
\qed

 \smallskip

\begin{proposition}
\label{Pmorse}
The solution $u_\eta$ of \eqref{1149} provided by Proposition \ref{prop1}, if not identically zero,  verifies
$$
i(u_\eta)\le N+2.
$$
\end{proposition}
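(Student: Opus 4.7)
My plan is to argue by contradiction, transferring the approximate constrained Morse index information on the $u_n$ into an unconstrained Morse index bound on the limit $u_\eta$. Suppose $i(u_\eta)\ge N+3$; then there exists a subspace $Y\subset H^1_0(\Omega)$ with $\dim Y = N+3$ on which the quadratic form
\begin{equation*}
Q(\varphi):= \int_\Omega \bigl(|\nabla \varphi|^2 + \lambda_\eta\, \varphi^2 -(p-1)\eta |u_\eta|^{p-2}\varphi^2\bigr)\, dx
\end{equation*}
is strictly negative on $Y\setminus\{0\}$. Since $\dim Y<\infty$, by homogeneity and compactness of the unit sphere of $(Y,\|\cdot\|)$ I obtain a uniform $\delta>0$ with $Q(\varphi)\le -\delta\|\varphi\|^2$ on $Y$.

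The central step is to show that the quadratic forms
\begin{equation*}
Q_n(\varphi):= I_\eta''(u_n)[\varphi,\varphi] + \lambda_n |\varphi|_2^2 = \int_\Omega \bigl(|\nabla\varphi|^2 + \lambda_n \varphi^2 - (p-1)\eta |u_n|^{p-2}\varphi^2\bigr)\, dx
\end{equation*}
converge to $Q$ uniformly on the unit sphere of $Y$. The gradient and $L^2$ pieces are immediate from $\lambda_n\to\lambda_\eta$ (Proposition~\ref{prop1}). For the nonlinear term I would fix a basis $\{\varphi_1,\ldots,\varphi_{N+3}\}$ of $Y$ and split each integral over $B_R$ and its complement. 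Locally, Rellich--Kondrachov gives $u_n\to u_\eta$ in $L^p(B_R\cap\Omega)$, hence $|u_n|^{p-2}\to |u_\eta|^{p-2}$ in $L^{p/(p-2)}(B_R\cap\Omega)$ by continuity of the corresponding Nemytskii map, and H\"older with conjugate exponents $p/(p-2)$ and $p/2$ yields convergence of the local integrals. For the complement the bound
\begin{equation*}
\left|\int_{|x|>R}|u_n|^{p-2}\varphi_i\varphi_j\,dx\right|\le |u_n|_p^{p-2}\Bigl(\int_{|x|>R}|\varphi_i|^p\Bigr)^{1/p}\Bigl(\int_{|x|>R}|\varphi_j|^p\Bigr)^{1/p}
\end{equation*}
becomes arbitrarily small, uniformly in $n$, thanks to the $L^p$-boundedness of $(u_n)_n$ and the finite dimensionality of $Y\subset L^p(\Omega)$.

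Once $Q_n\to Q$ uniformly on the unit sphere of $Y$, for $n$ large enough (and $\zeta_n<\delta/2$) I obtain $Q_n(\varphi)< -\zeta_n\|\varphi\|^2$ for every $\varphi\in Y\setminus\{0\}$. At this point a codimension-one argument closes the case: the single linear equation $\int u_n\varphi\,dx=0$ defining $T_{u_n}S_m$ cuts $Y$ down by at most one dimension, so $Y_n:=Y\cap T_{u_n}S_m$ has $\dim Y_n\ge N+2$; on $Y_n$ the form $Q_n$ coincides with $D^2I_\eta(u_n)$, giving $\widetilde{i}_{\zeta_n}(u_n)\ge N+2$, which contradicts the bound $\widetilde{i}_{\zeta_n}(u_n)\le N+1$ from Proposition~\ref{P3.8}.

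I expect the main technical nuisance to be the uniform tail control of the nonlinear form over the unit sphere of $Y$: it is routine but requires noticing that, because $Y$ is finite dimensional, the family $\{|\varphi|^p:\varphi\in Y,\ \|\varphi\|=1\}$ is uniformly integrable in $L^1(\Omega)$. Everything else reduces to standard Rellich--Kondrachov and Nemytskii continuity facts combined with the elementary linear algebra relating the constrained and unconstrained Morse indices.
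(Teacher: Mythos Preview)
Your argument is correct and follows essentially the same strategy as the paper: assume the Morse index is too large, extract a finite-dimensional subspace on which the limiting quadratic form is uniformly negative, pass this negativity to the approximating quadratic forms, and contradict the approximate Morse index bound \eqref{1827} via the codimension-one relation between the constrained and unconstrained indices. The only organizational differences are that the paper applies the codimension-one reduction at $u_\eta$ (working on $T_{u_\eta}S_\alpha$ with $\alpha=|u_\eta|_2$ and then projecting onto $T_{u_n}S_m$), whereas you apply it at the end by intersecting with $T_{u_n}S_m$; and the paper handles the convergence of $\int|u_n|^{p-2}\varphi^2$ via the observation that $|u_n|^{p-2}$ is bounded in $L^{N/2}(\Omega)$, while your local/tail splitting with H\"older exponents $p/(p-2),p,p$ achieves the same end more explicitly.
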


\begin{proof} 
Throughout this proof, we let $\alpha:=|u_\eta|_2$.
For all $w \in H_0^1(\Omega)$,  
\begin{equation} \label{eq26}
D^2 I_{\eta}(u_{\eta})\left[w,w\right] = I''_{\eta}(u_{\eta})\left[w,w\right] + \lambda_{\eta} \langle w,w\rangle_{L^2} = \int_{\Omega} \left[ |\nabla w|^2 + \left( \lambda_{\eta} -\eta(p-1) |u_{\eta}|^{p-2}\right) w^2\right]  \, dx
\end{equation}  
where
\begin{equation*}       
   \lambda_\eta :=  -\frac{1}{\alpha^2}I'_\eta(u_\eta)\left[u_\eta   \right] =  - \lim_{n \to \infty}\frac{1}{m^2}I'_\eta(u_n) \left[u_n\right],
   \end{equation*}
   
It is sufficient to show that $\widetilde i(u_\eta)\le N+1$, because $T_{u_\eta} S_{\alpha}$ has codimension 1.
Arguing by contradiction, let us assume that there exists a subspace $Y \subset T_{u_\eta}S_\alpha$ with $\dim Y = N+2$ such that  
\begin{equation*}
D^2 I_{\eta}(u_{\eta})(w,w)  < 0, \quad \mbox{for all} \ w \in Y \backslash \{0\}. 
\end{equation*}
Since $Y$ has finite dimension, there is a constant $\beta > 0$ such that  
\begin{equation*}  
D^2 I_{\eta}(u_{\eta})\frac{(w,w)}{\|w\|^2} < - \beta ,  \quad \mbox{for all } w \in Y\backslash \{0\}.  
\end{equation*}  
Now, observe that $\{|u_n|^{p-2}\}$ is bounded in $L^{N/2}(\Omega)$, because $p\in\left(2+\tfrac4N,\tfrac{2N}{N-2}\right)$.
Then, since $\dim Y<\infty$, 
$$
D^2 I_{\eta}(u_n)\frac{(w,w)}{\|w\|^2} \longrightarrow D^2 I_{\eta}(u_\eta)\frac{(w,w)}{\|w\|^2},\ \mbox{ as } 
n\to\infty,\quad\mbox{ uniformly in }w \in Y\backslash \{0\},
$$
up to a subsequence.
Then, for large $n$,
\beq
\label{10135}
D^2 I_{\eta}(u_n)(w,w) < - \frac{\beta}{2} \|w\|^2 \quad \mbox{for all } w \in Y.
\eeq
Classical reasoning indicates that, for sufficiently large $n$,  $\dim Y=\dim Y_n$, where $Y_n$ represents the projection of $Y$ on $T_{u_n}S_m$, and that equation \eqref{10135} is satisfied when substituting $Y_n$ for $Y$. 
Nevertheless, given that we have assumed $\dim Y = N+2$, this is in contradiction with equation \eqref{1827} in Theorem \ref{Th.eta}, concluding the proof.
\end{proof}


\section{Solving approximating problem}


In this section, we show that the function $u_\eta$, introduced in the Notation \ref{ueta}, is a solution to the approximate problem \eqref{eq U}.

\begin{proposition}
\label{P5.1}
 Assume that  \eqref{NEm} holds, let $\eta\in A_\Omega$ and set 
  \begin{equation*} 
  \lambda_n:= I'_\eta (u_n)u_n ,
    \end{equation*}
    then $\lambda_n\to\lambda > 0$, up to a subsequence.
\end{proposition}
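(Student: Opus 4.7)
My plan is to argue by contradiction, ruling out the case $\lambda_\eta=0$ left open by Proposition \ref{prop1}, where $\lambda_\eta$ denotes the subsequential limit of $\lambda_n/m^2$ and is already known to be non-negative. I would split according to whether the weak limit $u_\eta$ is trivial or not.

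If $u_\eta\not\equiv 0$, the strategy is to contradict the non-existence hypothesis \eqref{NEm} by a simple rescaling. With $\lambda_\eta=0$, $u_\eta$ is a nontrivial, non-negative element of $H^1_0(\Omega)$ solving $-\Delta u_\eta=\eta u_\eta^{p-1}$, and Proposition \ref{Pmorse} yields $i(u_\eta)\le N+2$. Setting $v:=\eta^{1/(p-2)}u_\eta$ produces a non-negative $H^1_0(\Omega)$-solution of $\Delta v+v^{p-1}=0$. A direct check shows that the second-variation quadratic forms associated with $u_\eta$ and with $v$ coincide, so $i(v)=i(u_\eta)\le N+2$. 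Moreover, $L^2$-weak lower semicontinuity gives $|u_\eta|_2\le m$, and since $\eta<1$ and $p>2$, one has $|v|_2\le\eta^{1/(p-2)}m<m$; this would violate \eqref{NEm}.

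The main obstacle is the case $u_\eta\equiv 0$, where \eqref{NEm} provides no direct information. My plan is to recover an unconstrained Palais-Smale sequence and eliminate every possible bubble via a Pohozaev argument. Indeed, $\lambda_\eta=0$ forces $\lambda_n u_n\to 0$ in $H^{-1}$, so by \eqref{1356} the sequence $(u_n)$ is an unconstrained PS sequence for $I_\eta$ at the strictly positive level $c^\eta_{\Omega,m}$. I would then apply the Benci-Cerami splitting lemma \cite{BC87ARMA} to obtain a decomposition
\[
u_n=\sum_{j=1}^{k}\omega_j(\cdot-y_{n,j})+o(1)\quad\text{in }H^1,
\]
with $|y_{n,j}|\to\infty$ and each $\omega_j\in H^1(\R^N)$ solving the limit equation $-\Delta\omega_j=\eta\omega_j^{p-1}$ on $\R^N$. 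Since $p<2N/(N-2)$, the classical Pohozaev identity forces $\omega_j\equiv 0$ for every $j$; hence $u_n\to 0$ strongly in $H^1$, which is incompatible with $|u_n|_2=m>0$. In either case a contradiction arises, and $\lambda_\eta>0$ follows.
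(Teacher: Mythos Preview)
Your treatment of the case $u_\eta\not\equiv 0$ is correct and matches the paper's argument (its subcase where the concentration centers stay bounded): rescale by $\eta^{1/(p-2)}$, keep the Morse index bound from Proposition~\ref{Pmorse}, and contradict \eqref{NEm}.

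There is, however, a genuine gap in the case $u_\eta\equiv 0$. The Benci--Cerami splitting lemma in \cite{BC87ARMA} is formulated for the functional carrying a positive mass term $\lambda>0$; it is precisely this term that makes the $H^1$ norm the natural one and forces the remainder to vanish in $H^1$. When $\lambda_\eta=0$ you are dealing with an unconstrained PS sequence for $u\mapsto \tfrac12|\nabla u|_2^2-\tfrac{\eta}{p}|u|_p^p$, and no version of the splitting yields a remainder that is $o(1)$ in the \emph{full} $H^1$ norm. Concretely, consider the vanishing scenario (no profiles at all): Lions' lemma gives $u_n\to 0$ in $L^p$, and testing $I'_\eta(u_n)$ against $u_n$ then gives $|\nabla u_n|_2\to 0$; but $|u_n|_2=m$ is fixed, so $u_n\not\to 0$ in $H^1$. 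Your proposed contradiction ``$u_n\to 0$ in $H^1$ versus $|u_n|_2=m$'' therefore fails exactly in the case where it is needed.

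The fix is to change the contradiction: once $|\nabla u_n|_2\to 0$ and $|u_n|_p\to 0$, you get $I_\eta(u_n)\to 0$, which is incompatible with $I_\eta(u_n)\to c^\eta_{\Omega,m}>0$. This is what the paper does, organizing the argument via Lions' vanishing/non-vanishing dichotomy rather than a full profile decomposition: vanishing contradicts the positive energy level; non-vanishing with centers going to infinity produces a nontrivial $H^1(\R^N)$ solution of $-\Delta v=\eta v^{p-1}$, excluded by Pohozaev; non-vanishing with bounded centers gives $u_\eta\neq 0$, already handled. Your overall structure is sound, but the endgame in the $u_\eta\equiv 0$ branch must target the energy level, not the $L^2$ mass.
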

\begin{proof}
     Being $(u_n)_n$ bounded, it is immediate to see that $(\lambda_n)_n$ is bounded, so we can assume $\lambda_n \to \lambda$, up to a subsequence, for some $\lambda \in \R$.
    Moreover, from Proposition \ref{prop1}, we also have $\lambda \geq 0$.   
    Assume by contradiction that $\lambda=0$.
    
   First, we observe that $\lambda =0$ and \eqref{1356} imply 
    \begin{equation} \label{eq4}
        I'_\eta(u_n)=I'_\eta(u_n)-\lambda_n u_n \to 0 \quad \text{in} \ \left(H^1_0(\Omega)\right)^*.
    \end{equation}
    Then, we consider 
    \begin{equation*}
        \alpha:= \limsup_{n \to \infty } \sup_{z \in \R^N} \int_{B_1(z)} |u_n|^2 \, dx
    \end{equation*}
and distinguish two cases $\alpha=0$ and $\alpha>0$, showing that both are not possible.

\smallskip

If $(u_n)_n$ is vanishing, that is $\alpha=0$, then invoking \cite[Lemma I.1]{MR0778974} we get $u_n \to 0$ in $L^q(\R^N)$ for all $q \in (2,2^*)$.

From this, testing \eqref{eq4} with $u_n$, we can deduce that
    \begin{equation*}
        \int_\Omega |\nabla u_n |^2 \, dx + \lambda_n\int_\Omega u_n^2 \, dx \to 0 
    \end{equation*}
as $n \to \infty$. 
Since $\lambda_n \to 0$, we see that $\int_\Omega |\nabla u_n |^2 \, dx\to 0$ and, recalling that $(u_n)_n$ is a Palais-Smale sequence for $I_\eta$ at level $c_{m,\eta}$, we get $I_\eta(u_n)\to 0$, in contradiction to $I_\eta(u_n)\to c_{m,\eta}>0$.

\smallskip

Assume now $\alpha >0$. 
Then, there exists a sequence $(z_n)_n \subset \R^N$ such that $v_n:= u_n(\cdot+y_n) \rightharpoonup v$ in $H^1(\R^N)$ for some $v \in H^1(\R^N) \setminus \left\{ 0\right\}$. 

On the one hand, if $|z_n| \to \infty$, we observe that 
    \begin{equation*}
        o(1)= I'_\eta(u_n)-\lambda_n u_n= I'_\eta(v_n).
    \end{equation*}
    Moreover, for all $\varphi \in C_0^\infty(\R^N)$, we have that eventually $\supp \varphi \subset \Omega_n$ where 
    \begin{equation*}
        \Omega_n := \left\{ x \in \R^N \ :\  x+ z_n \in \Omega \right\}.
    \end{equation*}
    Thus, $v$ weakly solves
    \begin{equation*}
        -\Delta v = \eta v^{p-1} \quad \text{in} \qquad v\in H^1(\R^N),
    \end{equation*}
but this problem does not admit positive nontrivial solutions as a consequence of the Pohozaev identity (see, f.i., \cite[Proposition 1]{BeLi83ARMA1}).
 
On the other hand, if $|z_n|$ remains bounded, we have $u_n \rightharpoonup v$ in $H^1_0(\Omega)$ for a suitable $v\not\equiv 0$, with $|v|_2\le m$ and $v\geq 0$ because $u_n\geq 0$.
Then, by \eqref{eq4} we obtain a nonnegative weak nontrivial solution $v$ of 
\begin{equation*} \Delta v + \eta v^{p-1}=0 \quad \text{in} \ \Omega,
    \end{equation*}
 with $i(v)\le N+2$ by Proposition \ref{Pmorse}.  
But, in such a case, $u=\eta^{\frac{1}{p-2}}v$ verifies $0<|u|_2<m$ and solves $\Delta u + u^{p-1}=0$    in $\Omega$, contrary to assumption \eqref{NEm}.
    
   \end{proof}

\begin{lemma}
    \label{LperPS}
    For every $m>0$, there exist $\varrho_{ps}\in(0,\varrho_2)$ and $\eta_{ps}\in (0,1)$ such that $c^\eta_{\Omega,m}<\min\left\{2,1+\tfrac2N\right\} c_{m}\le\min\left\{2,1+\tfrac2N\right\} c_{m,\eta}$ whenever $\Omega^c\subseteq B_{\varrho_{ps}}$ and $\eta\in(\eta_{ps},1]$.   
\end{lemma}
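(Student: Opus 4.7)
The second inequality $c_m \le c_{m,\eta}$ (for $\eta \in (0,1]$) is immediate from \eqref{eq_decr}, so the substantive content is the strict bound $c^\eta_{\Omega,m} < \min\{2, 1+\tfrac{2}{N}\} c_m$. Since $N \ge 3$ one has $\min\{2, 1 + 2/N\} = 1 + 2/N > 1$, so there is a definite positive gap of size $(2/N)\,c_m$ to exploit.

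The plan is to feed into the definition of $c^\eta_{\Omega,m}$ the most natural competitor, namely $\gamma(y,h) := Z_{m,\varrho}[y,h]$ itself, which trivially belongs to $\Gamma_{m,\varrho}$ because it agrees with its prescribed boundary values on $\partial Q$. This immediately yields
$$c^\eta_{\Omega,m} \le \max_{(y,h) \in Q} I_\eta(Z_{m,\varrho}[y,h]).$$
The task then reduces to showing that the right-hand side can be made arbitrarily close to $c_m$ by taking $\varrho$ small and $\eta$ close to $1$.

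To control the right-hand side I would combine Lemma \ref{L3.1}$(ii)$ with the explicit formula \eqref{1806}. Since $Q$ is compact, Lemma \ref{L3.1}$(ii)$ gives $Z_{m,\varrho}[y,h] \to \widetilde Z_m[y,h]$ strongly in $H^1(\R^N)$ uniformly in $(y,h) \in Q$ as $\varrho \to 0$, and \eqref{1806} shows $|I_\eta(u)-I(u)| \le \frac{1-\eta}{p}|u|_p^p$, so $I_\eta \to I$ uniformly on bounded sets as $\eta \to 1^-$. Putting these together,
$$\sup_{(y,h) \in Q}\bigl|I_\eta(Z_{m,\varrho}[y,h]) - I(\widetilde Z_m[y,h])\bigr| \longrightarrow 0\quad \text{as }\varrho \to 0,\ \eta \to 1^-.$$
By translation invariance of $I$ in $\R^N$ and the structure $\widetilde Z_m[y,h] = h*\widetilde Z_m(\cdot - y)$, the value $I(\widetilde Z_m[y,h]) = I(h*\widetilde Z_m)$ is independent of $y$, and Remark \ref{RJJ} identifies the mountain-pass max $\max_{h \in [-\bar h,\bar h]} I(h*\widetilde Z_m) = I(\widetilde Z_m) = c_m$. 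Hence $\max_{(y,h) \in Q} I_\eta(Z_{m,\varrho}[y,h]) \to c_m$.

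Choosing $\varrho_{ps} \in (0,\varrho_2)$ and $\eta_{ps} \in (0,1)$ so that this maximum stays below $c_m + \tfrac{c_m}{N}$ completes the argument, since $c_m + \tfrac{c_m}{N} < (1+\tfrac{2}{N})c_m = \min\{2, 1+\tfrac{2}{N}\}\,c_m$. The only mild obstacle is guaranteeing the uniformity in $y$ of the $H^1$-convergence in Lemma \ref{L3.1}$(ii)$ despite the cutoff $\vartheta_\varrho$ being centered at the origin while the translation is by $y$; but this uniformity is exactly what is stated in Lemma \ref{L3.1}$(ii)$ (uniform in $y \in \R^N$, with $h$ bounded and $m \ge \bar m$), so no further work is needed beyond invoking it on the compact cylinder $Q$.
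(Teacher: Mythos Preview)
Your argument is correct and follows exactly the approach sketched in the paper: plug the trivial competitor $\gamma(y,h)=Z_{m,\varrho}[y,h]$ into the definition \eqref{1453} of $c^\eta_{\Omega,m}$, use Lemma~\ref{L3.1}$(ii)$ together with \eqref{1806} to push $\max_{Q} I_\eta(Z_{m,\varrho}[y,h])$ toward $c_m$ as $\varrho\to 0$ and $\eta\to 1$, and invoke \eqref{eq_decr} for the second inequality. The paper's own proof is a one-line sketch of precisely this; you have simply filled in the details.
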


The proof is a direct consequence of the definition of $C^\eta_{\Omega,m}$, of \eqref{eq_decr}, and of Lemma \ref{L3.1}.
In fact, it suffices to consider in \eqref{1453} the trivial map $\gamma (y,h)={Z}_{m,\varrho_{ps}}  [y,h]$ for all $(y, h) \in  Q$, for $\varrho_{ps}$ sufficiently small.

   \begin{theorem}
   \label{Th.eta}
    Let $\varrho_{ps}$ and $\eta_{ps}$ be as in Lemma \ref{LperPS}, if  $\Omega^c\subseteq B_{\varrho_{ps}}$  and $\eta\in A_\Omega\cap (\eta_{ps},1]$  (see \eqref{1052}),  then $u_\eta$ is a critical point for $I_\eta$ constrained on $S_m$, at the minmax level  $c^\eta_{\Omega,m}$.
 \end{theorem}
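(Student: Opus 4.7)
The plan is to show that $u_n\to u_\eta$ strongly in $H^1(\R^N)$: once this is done the mass and the energy pass to the limit, so $|u_\eta|_2=m$ and $I_\eta(u_\eta)=c^\eta_{\Omega,m}$, and together with Proposition \ref{prop1} this gives that $u_\eta$ is a critical point of $I_\eta|_{S_m}$ at the minmax level. Since Proposition \ref{P5.1} yields $\lambda_n\to\lambda_\eta>0$, the sequence $(u_n)$ is a bounded Palais--Smale sequence for the unconstrained functional $J(u):=I_\eta(u)+\frac{\lambda_\eta}{2}|u|_2^2$ on $H^1_0(\Omega)$, at the level $c^\eta_{\Omega,m}+\frac{\lambda_\eta}{2}m^2$. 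The splitting lemma of Benci--Cerami \cite{BC87ARMA} then gives, up to a subsequence,
\begin{equation*}
u_n=u_\eta+\sum_{j=1}^{k}V(\cdot-z_n^j)+r_n,
\end{equation*}
with $r_n\to 0$ in $H^1(\R^N)$, $|z_n^j|\to\infty$ and $|z_n^j-z_n^{j'}|\to\infty$ for $j\neq j'$, and each $V$ a nontrivial solution of $-\Delta V+\lambda_\eta V=\eta V^{p-1}$ in $\R^N$. Since $u_n\geq 0$, Kwong's uniqueness theorem forces every bubble to be a positive translate of a single profile $V=\widetilde Z_{m_V,\eta}$; all bubbles then share the common mass $m_V$ and the common energy $I_\eta(V)=c_{m_V,\eta}$.

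Brezis--Lieb produces the mass/energy decomposition
\begin{equation*}
m^2=|u_\eta|_2^2+k\, m_V^2,\qquad c^\eta_{\Omega,m}=I_\eta(u_\eta)+k\, c_{m_V,\eta}.
\end{equation*}
If $k=0$ strong convergence is immediate, so I argue by contradiction that $k\geq 1$. When $u_\eta\equiv 0$ and $k=1$ we have $m_V=m$, hence $c^\eta_{\Omega,m}=c_{m,\eta}$, contradicting the strict bound $c^\eta_{\Omega,m}\geq L_{\Omega,m}>c_{m,\eta}$ obtained in the proof of Proposition \ref{P3.8}. When $u_\eta\equiv 0$ and $k\geq 2$, the explicit scaling $c_{m,\eta}=C_\eta\, m^\gamma$, with $\gamma:=2-4/[N-4/(p-2)]<0$ in the mass-supercritical regime, gives
\begin{equation*}
c^\eta_{\Omega,m}=k\, c_{m/\sqrt{k},\eta}=k^{1-\gamma/2}\,c_{m,\eta}\geq 2^{1-\gamma/2}\,c_{m,\eta}>\bigl(1+\tfrac{2}{N}\bigr)\,c_{m,\eta},
\end{equation*}
in contradiction with Lemma \ref{LperPS}.

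The delicate sub-case is $u_\eta\not\equiv 0$ with $k\geq 1$. Setting $\tilde m:=|u_\eta|_2\in(0,m)$, a direct minimisation of $s\mapsto s^\gamma+(1-s^2)^{\gamma/2}$ on $(0,1)$ (the optimum being $2^{1-\gamma/2}$, attained at $s=1/\sqrt{2}$) yields the sharp two-term convexity bound $c_{\tilde m,\eta}+c_{m_V,\eta}\geq 2^{1-\gamma/2}\,c_{\sqrt{\tilde m^2+m_V^2},\eta}\geq 2^{1-\gamma/2}\,c_{m,\eta}$ (using $\tilde m^2+m_V^2\leq m^2$ and $\gamma<0$). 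Therefore, once the energy lower bound $I_\eta(u_\eta)\geq c_{\tilde m,\eta}$ is at hand, one obtains
\begin{equation*}
c^\eta_{\Omega,m}\geq c_{\tilde m,\eta}+k\, c_{m_V,\eta}\geq c_{\tilde m,\eta}+c_{m_V,\eta}\geq 2^{1-\gamma/2}\,c_{m,\eta}>\bigl(1+\tfrac{2}{N}\bigr)\,c_{m,\eta},
\end{equation*}
again contradicting Lemma \ref{LperPS}. The inequality $I_\eta(u_\eta)\geq c_{\tilde m,\eta}$ is the main obstacle, because a whole-space Pohozaev identity is not available on the exterior domain $\Omega$; the route I propose is to combine the constrained Morse-index bound $\widetilde i(u_\eta)\leq N+1$ (which follows from Proposition \ref{Pmorse} and $\codim T_{u_\eta}\widetilde S_{\tilde m}=1$) with a test along the $L^2$-preserving scaling $h\ast u_\eta$, forcing $u_\eta$ to sit above the mountain-pass geometry that characterises $c_{\tilde m,\eta}$ on $\widetilde S_{\tilde m}$.
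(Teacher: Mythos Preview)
Your overall architecture coincides with the paper's: pass to the free functional $E_{\lambda_\eta,\eta}$ using $\lambda_\eta>0$ from Proposition~\ref{P5.1}, apply the Benci--Cerami splitting, and rule out bubbles by the energy window of Lemma~\ref{LperPS}. The cases $u_\eta\equiv0$ are handled correctly, and your power-law/convexity computations with $c_{m,\eta}=C_\eta m^{\gamma}$ are fine.

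The gap is exactly where you flag it: the case $u_\eta\not\equiv0$, $k\ge 1$. Your argument rests entirely on the inequality $I_\eta(u_\eta)\ge c_{\tilde m,\eta}$, which you do not prove, and the route you propose does not work. The infinitesimal generator of the scaling,
\[
\partial_h\big|_{h=0}(h\ast u_\eta)=\tfrac{N}{2}\,u_\eta+x\cdot\nabla u_\eta,
\]
is \emph{not} an element of $H^1_0(\Omega)$: on $\partial\Omega$ one has $u_\eta=0$ but $x\cdot\nabla u_\eta=(\partial_\nu u_\eta)(x\cdot\nu)$, which is generically nonzero. Consequently the fiber $h\mapsto h\ast u_\eta$ leaves $H^1_0(\Omega)$ for $h\neq 0$, and the Morse-index bound you invoke --- which is a statement about subspaces of $T_{u_\eta}S_{\tilde m}\subset H^1_0(\Omega)$ --- gives no control over $D^2I_\eta(u_\eta)$ along this direction. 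Even setting that aside, a bound of the form $\widetilde i(u_\eta)\le N+1$ only limits the number of negative directions; it says nothing about whether the specific direction $\tfrac{N}{2}u_\eta+x\cdot\nabla u_\eta$ is positive or negative, so it cannot force $P(u_\eta)=|\nabla u_\eta|_2^2-\tfrac{\eta N(p-2)}{2p}|u_\eta|_p^p=0$, which is what ``sitting on the mountain-pass geometry'' would require. In short, the Pohozaev-type information that yields $I_\eta(u_\eta)\ge c_{\tilde m,\eta}$ in \cite[\S3.3]{MR4304693} (where the weak limit solves the equation on all of $\R^N$) genuinely uses the full space; in the exterior domain the corresponding identity carries the boundary term $\int_{\partial\Omega}|\partial_\nu u_\eta|^2(x\cdot\nu)\,d\sigma$, and neither the Morse index nor the scaling test substitutes for it.

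The paper does not re-prove this step either: it simply cites \cite[\S3.3]{MR4304693} together with Lemma~\ref{LperPS}. So to close the gap you should go back to that reference and check which estimate actually drives the exclusion of the configuration ``nontrivial weak limit plus one bubble''; the argument there is phrased for the free functional $E_{\lambda_\eta,\eta}$ at level $c^\eta_{\Omega,m}+\tfrac{\lambda_\eta}{2}m^2$, and uses that any nontrivial nonnegative critical point of $E_{\lambda_\eta,\eta}$ on $H^1_0(\Omega)$ has energy strictly above the $\R^N$ ground-state level, combined with the compactness window coming from Lemma~\ref{LperPS}. That line is what you should reproduce, rather than attempting to compare $I_\eta(u_\eta)$ directly with $c_{\tilde m,\eta}$.
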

 
\begin{proof}
Using Notation \ref{ueta}, and according to Proposition \ref{P5.1}, let us assume that $\lambda_n\to\lambda_\eta>0$.  
Then,  by \eqref{1356}, we see that $(u_n)_n$ is a bounded Palais-Smale sequence also for the functional
$$
E_{\lambda_\eta,\eta}(u):=\frac{1}{2} \int  | \nabla u |^2 \, dx +\frac{\lambda_\eta}{2} \int_\Omega u^2\, dx- \frac{\eta}{p} \int  | u |^p \, dx \qquad u\in H^1_0(\Omega).
$$
Hence, taking into account Lemma \ref{LperPS}, we can proceed exactly as in \cite[\S 3.3]{MR4304693} and conclude that $(u_n)_n$ strongly converges in $H^1_0(\Omega)$ to the function $u_\eta$.
Then, clearly, $I_\eta(u_\eta)=c^\eta_{\Omega,m}$ and $u_\eta$ is a critical value for the functional $E_{\lambda_\eta,\eta}$, so a constrained critical point for $I_\eta$  constrained on $S_m$.
\end{proof}
     
\medskip


\section{Blow-up analysis and proof of Theorem \ref{T1}}
\label{S6}


In this section, we are working in the following framework.

\begin{notat}
\label{nota2}
Let $(\eta_n)_n$ be a sequence in $A_\Omega$ such that  $\eta_n \nearrow 1$, we consider the family $u_{\eta_n}$ of nonnegative solutions of \eqref{eq U} provided by Theorem \ref{Th.eta}, and the corresponding Lagrange multipliers $\lambda_{\eta_n}$, with $\lambda_{\eta_n}>0$ by Proposition \ref{P5.1}.

To simplify the notation, we simply write $u_n=u_{\eta_n}$ and $\lambda_n=\lambda_{\eta_n}$, and we use the same indices up to a subsequence.
 \end{notat}
 
\begin{remark} 
For every $n\in\N$ the solution $u_n$ is regular, hence strictly positive, and decays exponentially (see, for example, \cite[\S 8]{MR1814364}, \cite[\S 6.3.2]{EvansBook}, or also \cite[Theorem 1.4]{CM10} for exponential decay).
\end{remark}

Here, our aim is to analyze the asymptotic behavior of $(u_n)_n$ and $(\lambda_n)_n$, as $n\to\infty$. 
We can easily state a first estimate:

\begin{remark}
\label{Lsat}
If $x_n\in\R^N$ is a local maximum point of $u_n$, then
\[
\eta_n u_n(x_n) \geq \lambda_n^{\frac{1}{p-2}}.
\]
\end{remark}
Indeed, in the maximum points, the following relation holds:
\[
\lambda_n u_n(x_n) - \eta_n u_n(x_n)^{p-1} = \Delta u_n(x_n) \leq 0.
\]

As a consequence of Remark \ref{Lsat}, we see that if the Lagrange multipliers diverge, then $|u_n|_\infty\to \infty$.
The next lemma, basic to analyze in Proposition \ref{th3} the blow-up phenomenon, states  also, in point $(a)$,  that if $|u_n|_\infty$ is unbounded then 
$$
|u_n|_\infty\sim \lambda_n^{\frac{1}{p-2}}.
$$
 
\begin{lemma}
\label{th2}
Assume that for every $n\in  \N$ there exists $P_n \in \Omega$ such that 
\begin{equation*}
u_n(P_n)\to \infty \mbox{ as } n\to\infty, \mbox{ and }  u_n(P_n) = \max_{B_{\tilde  R_n \tilde{\varepsilon}_n}(P_n)} u_n,
\end{equation*}
for some $\widetilde R_n \to \infty$, where $\tilde{\varepsilon}_n:=u_n(P_n)^{-\frac{p-2}{2}}$, 
and define
\begin{equation*}
U_n(y) = \varepsilon_n^{\frac{2}{p-2}} u_n(\varepsilon_n y + P_n), \quad y \in \Omega_n = \frac{\Omega - P_n}{\varepsilon_n},
\end{equation*}
with $\varepsilon_n = \lambda_n^{-\frac{1}{2}}$. 
Then, for a subsequence, we have: 
\begin{itemize}
    \item[(a)]  as $n \to \infty$, it holds that $\frac{\varepsilon_n} {d(P_n, \partial \Omega)} \to 0$, and $\frac{\tilde \varepsilon_n}{\varepsilon_n}\to [U(0)]^{-\frac{p-2}{2}}\in(0,1]$,  where $U$ is the unique positive solution of \eqref{Pinf};
    \item[(b)]  $u_n(P_n) = \max\limits_{B_{R_n \varepsilon_n}(P_n)} u_n$,  where $R_n=\tfrac{\tilde \e_n}{\e_n}\widetilde R_n$;
    \item[(c)]  $U_n \to U$ in $C^1_{\text{loc}}(\mathbb{R}^N)$, as $n \to \infty$;
    \item[(d)]  there exists $\psi_n \in C_0^\infty(\Omega)$, with $\text{supp}\,\psi_n \subset B_{R \varepsilon_n}(P_n)$   for a suitable $R>0$, such that, for large $n$,
    \begin{equation} \label{eq12}
    \int \big[|\nabla \psi_n|^2 + \left(\lambda_n   - (p-1) u_n^{p-2} \right) \psi_n^2 \big]\, dx < 0;
    \end{equation}
    \item[(e)]  for all $R > 0$, there holds
    \begin{equation*}
    \lim_{n \to +\infty} \lambda_n^{\frac{N}{2}-\frac{2}{p-2}} \int_{B_{R \varepsilon_n}(P_n)} u_n^{2} \, dx = 
    \int_{B_R(0)} U^{2} \, dx.
    \end{equation*}  
\end{itemize}
\end{lemma}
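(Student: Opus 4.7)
The plan is a blow-up analysis at $P_n$. The natural self-similar scale is $\tilde\varepsilon_n=u_n(P_n)^{-(p-2)/2}$, so I would introduce the normalized family
\[
\tilde U_n(y):=\frac{u_n(P_n+\tilde\varepsilon_n y)}{u_n(P_n)},\qquad y\in\tilde\Omega_n:=\frac{\Omega-P_n}{\tilde\varepsilon_n},
\]
which satisfies $-\Delta\tilde U_n+a_n\tilde U_n=\eta_n\tilde U_n^{p-1}$ with $a_n:=\lambda_n\tilde\varepsilon_n^2=\lambda_n\,u_n(P_n)^{-(p-2)}\in(0,\eta_n]$ (the upper bound coming from Remark \ref{Lsat}), and $\tilde U_n(0)=1=\max_{B_{\widetilde R_n}}\tilde U_n$. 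The idea is then to pass to a $C^{1,\alpha}_{\mathrm{loc}}$ limit $\tilde U$, identify it via Kwong's uniqueness, and transfer the information to the scale $\varepsilon_n=\lambda_n^{-1/2}$ via the exact relation $U_n(y)=(\varepsilon_n/\tilde\varepsilon_n)^{2/(p-2)}\,\tilde U_n((\varepsilon_n/\tilde\varepsilon_n)y)$.

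Two degenerate scenarios must be excluded. If $d(P_n,\partial\Omega)/\tilde\varepsilon_n$ stayed bounded, then after translation and rotation $\tilde U$ would be a nontrivial nonnegative bounded solution of $-\Delta\tilde U+a\tilde U=\tilde U^{p-1}$ on a half-space with vanishing Dirichlet trace, forbidden by moving plane / sliding arguments (Dancer, Esteban--Lions). If instead $a_n\to 0$, then $\tilde U$ would be a positive bounded solution of $-\Delta\tilde U=\tilde U^{p-1}$ on $\R^N$, ruled out by Gidas--Spruck since $p<2^*$. Hence $d(P_n,\partial\Omega)/\tilde\varepsilon_n\to\infty$ and $a_n\to a>0$. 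Since $0$ is the maximum of $\tilde U$, moving planes gives radial symmetry and Kwong's theorem forces $\tilde U(y)=a^{1/(p-2)}U(\sqrt{a}\,y)$; evaluating at $y=0$ yields $a=U(0)^{-(p-2)}$, with $U(0)\ge 1$ because $0$ is a maximum of $U$. Consequently $\tilde\varepsilon_n/\varepsilon_n=\sqrt{a_n}\to U(0)^{-(p-2)/2}\in(0,1]$, which is the second assertion of (a).

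From here, the first half of (a) follows from the factorization $\varepsilon_n/d(P_n,\partial\Omega)=(\varepsilon_n/\tilde\varepsilon_n)\,(\tilde\varepsilon_n/d(P_n,\partial\Omega))$; (b) from the bound $\tilde U_n\le 1$ on $B_{\widetilde R_n}$, transported through the rescaling with $R_n=(\tilde\varepsilon_n/\varepsilon_n)\widetilde R_n$; (c) from the identity linking $U_n$ to $\tilde U_n$ combined with the limit for $\tilde U_n$; and (e) by a change of variables, using $\varepsilon_n^{N-4/(p-2)}=\lambda_n^{-N/2+2/(p-2)}$ together with the $C^1_{\mathrm{loc}}$ convergence in (c). For (d), I would pick $\phi\in C_0^\infty(\R^N)$ close to $U$ in $H^1$ so that $\int(|\nabla\phi|^2+\phi^2-(p-1)U^{p-2}\phi^2)\,dy<0$; this is possible because testing the equation for $U$ against $U$ gives $\int(|\nabla U|^2+U^2)=\int U^p$, which renders the integral at $\phi=U$ equal to $-(p-2)\int U^p<0$. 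Setting $\psi_n(x):=\phi((x-P_n)/\varepsilon_n)$, whose support lies inside $\Omega$ for large $n$ by (a), and rescaling using $\lambda_n\varepsilon_n^2=1$, the left-hand side of \eqref{eq12} equals $\varepsilon_n^{N-2}$ times a quantity converging to the negative integral above, hence is negative for large $n$. The main obstacle is the blow-up step itself: excluding the two degenerations requires both half-space nonexistence and Gidas--Spruck, and Kwong's uniqueness is essential to pin down the profile; once this is settled, (a)--(e) reduce to routine bookkeeping.
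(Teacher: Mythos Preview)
Your blow-up strategy mirrors the paper's exactly: rescale by $\tilde\varepsilon_n$, exclude the half-space and the $a_n\to 0$ scenarios, identify the limit profile, and then read off (a)--(e). The bookkeeping in your last paragraph is fine.

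The gap is in the identification step. You write ``since $0$ is the maximum of $\tilde U$, moving planes gives radial symmetry and Kwong's theorem forces $\tilde U(y)=a^{1/(p-2)}U(\sqrt a\,y)$''. But at this stage you only know that $\tilde U$ is a \emph{bounded} positive solution of $-\Delta\tilde U+a\tilde U=\tilde U^{p-1}$ on $\R^N$ with $\tilde U(0)=\max\tilde U=1$; you have no decay at infinity. The Gidas--Ni--Nirenberg moving-plane argument on $\R^N$ requires $\tilde U(y)\to 0$ as $|y|\to\infty$ to start the plane, and the $L^2$ constraint on $u_n$ does not help here (after rescaling, $|\tilde U_n|_{L^2}\to\infty$ since $p>2+4/N$). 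Without decay, nothing you have written rules out, say, a nontrivial bounded profile that is not a single bump.

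The paper closes this gap differently: it transfers the Morse index bound $i(u_n)\le N+2$ from Proposition~\ref{Pmorse} to the limit, obtaining $i(\tilde U)\le N+2$ (by scaling back test functions with disjoint supports), and then invokes the Liouville-type classification of Esposito--Petralla \cite[Theorem~1.1]{MR2825606}, which says that a bounded positive finite-Morse-index solution of $-\Delta U+U=U^{p-1}$ on $\R^N$ must be the Kwong ground state. This Morse-index step is not optional cosmetics; it is precisely what substitutes for the missing decay information, and it is the one place in the whole section where the constrained Morse-index control on $u_n$ actually enters the blow-up analysis. Your sketch never uses the bound $i(u_n)\le N+2$, so as written it cannot conclude.
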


\begin{proof}
    We define $ \widetilde{U}_n(y) = \tilde{\varepsilon}_n^{\frac{2}{p-2}} u_n(\widetilde{\varepsilon}_n y + P_n) $ for $ y \in \tilde{\Omega}_n = \frac{\Omega - P_n}{\tilde{\varepsilon}_n}$ and set $d_n := \text{dist} \left(P_n, \partial \Omega \right)$. 
    We can assume that $\frac{\tilde{\varepsilon}_n}{d_n} \to L \in [0,+ \infty]$, up to a subsequence, for $n \to \infty$. 

    \medskip
    
{\underline {\em We claim that $L=0$}}. \quad     
Assume, by contradiction, that  $L\in (0, +\infty]$.
Then $\Omega_n \to H$, where $H$ denotes a half-space such that $0 \in \overline H$ and $\text{dist}(0, \partial H) = {1}/{L}$. 
The function $\widetilde{U}_n$ satisfies
\begin{equation*}
\begin{cases}
-\Delta \widetilde{U}_n + \lambda_n \tilde{\varepsilon}_n^2  \widetilde{U}_n = \eta_n \widetilde{U}_n^{p-1} & \text{in } \widetilde{\Omega}_n, \\
0 < \widetilde{U}_n \leq \widetilde{U}_n(0) = 1 & \text{in } \widetilde{\Omega}_n \cap B_{R_n}(0), \\
\widetilde{U}_n = 0 & \text{on } \partial \widetilde{\Omega}_n.
\end{cases}
\end{equation*}
Since $P_n$ is a point of local maximum for $u_n$, we deduce that
\begin{equation*}
0 \leq \Delta \widetilde{U}_n (0) = \eta_n - \lambda_n \tilde{\varepsilon}_n^2,
\end{equation*}
which implies that $\lambda_n \tilde{\varepsilon}_n^2 \leq \eta_n \to 1$ as $n \to \infty$. 
Thus, up to a subsequence, $\lambda_n \tilde{\varepsilon}_n^2 \to \tilde{\lambda}$ for some $ \tilde{\lambda} \in [0, 1]$. 
Now, from standard elliptic regularity \cite{MR1814364}, we can conclude that, up to a further subsequence, $\widetilde{U}_n \to \widetilde{U}$ in $C^1_{\text{loc}}(\bar{H})$, where $\widetilde{U}$ is a nontrivial solution of
\begin{equation}
\label{EL}
\begin{cases}
-\Delta \widetilde{U} + \tilde{\lambda} \widetilde{U} = \widetilde{U}^{p-1} & \text{in } H, \\
0 < \widetilde{U} \leq \widetilde{U}(0) = 1 & \text{in } H, \\
\widetilde{U} = 0 & \text{on } \partial H.
\end{cases}
\end{equation}
Observe that \eqref{EL} has no solution (see \cite[Theorem 1.3]{MR619749} for $\tilde\lambda=0$ and \cite[Theorem 1.1]{MR2825606} for $\tilde\lambda>0$).
Then we have a contradiction, and the claim follows.

\smallskip
 
Hence, we have to analyze the case $\frac{\tilde{\varepsilon}_n}{d_n} \to 0$, that is $H = \mathbb{R}^N$ in \eqref{EL}.
Furthermore, notice that by \cite[Theorem 1.1]{GS81CPAM} problem \eqref{EL} has only the trivial solution when $H=\R^N$ and $\tilde\lambda=0$, so we are left to consider when $\tilde\lambda>0$. 
 At this point, we also get the following
\begin{equation} \label{eq23}
 \frac{\tilde{\varepsilon}_n}{\varepsilon_n}  = {\lambda}_n^{1/2} \tilde{\varepsilon}_n  \to \tilde{\lambda}^{1/2} \in (0, 1] \quad \text{as } n \to \infty.
\end{equation}
Moreover, it is more convenient to work with the function $U_n$, that satisfies
\begin{equation*}
\begin{cases}
-\Delta U_n + U_n = \eta_n U_n^{p-1} & \text{in } \Omega_n, \\
0 < U_n \leq U_n(0) = \left( \frac{\tilde{\varepsilon}_n}{\varepsilon_n} \right)^{-\frac{2}{p-2}} & \text{in } \Omega_n \cap B_{R_n \frac{\tilde{\varepsilon}_n}{\varepsilon_n}}(0), \\
U_n = 0 & \text{on } \partial \Omega_n.
\end{cases}
\end{equation*}
Then, $U_n \to U$ in $C^1_{\text{loc}}(\R^N)$ as $n \to \infty$, where $U$ is a non-trivial bounded solution of 
\begin{equation}
\label{Kwo}
\begin{cases}
-\Delta U +  U = U^{p-1} & \text{in } \R^N, \\
0 < U \leq U(0)  = (\tilde\lambda)^{-\frac{1}{p-2}} & \text{in } \R^N.  
\end{cases}
\end{equation}
Now, we are proving that
\begin{equation}
\label{1733}
i(U) \leq \sup_n i(u_n) \le N+2,
\end{equation}
where the last inequality comes from Proposition \ref{Pmorse}.
If $i(U)\ge k\in\N$, then one can find  $\varphi_1, \dots, \varphi_k \in C_0^\infty(\mathbb{R}^N)$  
  orthogonal in $L^2(\R^N)$ such that
\begin{equation*}
\int_{\mathbb{R}^N} \big[|\nabla \varphi_i|^2 + (1 - (p-1) {U}^{p-2}) \varphi_i^2\big]\, dx < 0 \quad \forall i = 1, \dots, k.
\end{equation*}
It is easily seen that the functions $\varphi_{i,n}(x) := {\varepsilon}_n^{-\frac{N-2}{2}} \varphi_i\left(\frac{x - P_n}{{\varepsilon}_n}\right)$  are supported in $\Omega$ (because $\frac{{\varepsilon}_n}{d_n}=\frac{{\varepsilon}_n}{\tilde{\varepsilon}_n}\frac{\tilde{\varepsilon}_n}{d_n} \to 0$), are orthogonal in $L^2(\Omega)$ and satisfy
\begin{align*}
\int_{\Omega} \big[|\nabla \varphi_{i,n}|^2 + (\lambda_n - \eta_n (p-1) u_n^{p-2}) \varphi_{i,n}^2\big]\, dx
= & \int_{\Omega_n} \big[|\nabla \varphi_i|^2 + (1 - \eta_n(p-1)  {U}_n^{p-2}) \varphi_i^2\big]\, dx \\
& \to \int_{\mathbb{R}^N} \big[|\nabla \varphi_i|^2 + (1 - (p-1)  {U}^{p-2}) \varphi_i^2\big]\, dx < 0,
\end{align*}
as $n \to \infty$, for all $i = 1, \dots, k$. 
Hence,
\begin{equation*}
   \int_{\Omega} |\nabla \varphi_{i,n}|^2 + (\lambda_n - \eta_n(p-1) u_n^{p-2}) \varphi_{i,n}^2 dx < 0
\end{equation*}
for sufficiently large $n$ and for every $i=1,\cdots, k$.
So,  $k$ has to be less than or equal to $N+2$, and \eqref{1733} is proved.

By \cite[Theorem 1.1]{MR2825606}, we conclude that $U$ coincides with the solution of \eqref{Pinf}.
Therefore, the point $(a)$ is fully demonstrated, in light of \eqref{eq23} and \eqref{Kwo}.

Moreover, since $U$ is an unstable solution,  there exists $\psi \in C_0^\infty(\mathbb{R}^N)$ such that $\operatorname{supp} \psi \subset B_R(0)$, $R > 0$, and
\begin{equation*}
\int_{\mathbb{R}^N} \big[|\nabla \psi|^2 + (1 - (p-1) U^{p-2}) \psi^2 \big]\, dx< 0.
\end{equation*}
Then, the function $\psi_n(x) = \e_n^{-\frac{N-2}{2}} \psi\left(\frac{x - P_n}{\e_n}\right)$ satisfies \eqref{eq12}, for large $n$.

To conclude the proof, we observe that for every $R>0$
\begin{equation*}
\int_{B_R(0)} U^{2} = \lim_{n \to \infty} \int_{B_R(0)} U_n^{2}
= \lim_{n \to +\infty} \lambda_n^{\frac{N}{2}-\frac{2}{p-2}} \int_{B_{R \varepsilon_n}(P_n)} u_n^{2} \, dx ,
\end{equation*}
proving also point $(e)$.
\end{proof}

\begin{proposition}
 \label{th3} 
Let $(u_n)_n$ and $(\lambda_n)_n$ be as in Notation \ref{nota2}.
If $\lambda_n\to\infty$, then, up to a subsequence, there exist at most $k$ sequences of points $(P_n^1)_n, \dots, (P_n^k)_n$, with $k \leq N+2$, such that for all $i, j = 1, \dots, k$, $i \neq j$, we have
\begin{equation} \label{eq13}
\lambda_n |P_n^i - P_n^j|^2 \to \infty \quad \text{and} \quad \lambda_n d(P_n^i, \partial \Omega)^2 \to \infty, \quad \text{as } n \to \infty,
\end{equation}
\begin{equation} \label{eq14}
u_n(P_n^i) = \max_{ B_{\e_n R_n }(P_n^i)} u_n,\ \mbox{ for some }R_n \to \infty,\ \mbox{ as }n \to \infty,
\end{equation}
 where  $\e_n=\lambda_n^{-1/2}$, and the functions
\begin{equation*}
U_n(y) = \varepsilon_n^{\frac{2}{p-2}} u_n(\varepsilon_n y + P_n^i), \quad y \in \Omega_n = \frac{\Omega - P_n^1}{\varepsilon_n},\quad i-1,\ldots,k,
\end{equation*}
satisfy $(c)-(e)$ in Lemma \ref{th2}.
Moreover, there holds 
\begin{equation} \label{eq10}
u_n(x) \leq C \lambda_n^{\frac{1}{p-2}} \sum_{i=1}^k e^{-\gamma  \lambda_n^{\frac{1}{2}} |x - P_n^i|} \quad \forall x \in \Omega, \, n \in \mathbb{N},
\end{equation}
for some constants $C, \gamma > 0$.
\end{proposition}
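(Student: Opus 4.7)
The plan is to extract the blow-up points iteratively, using the Morse-index estimate $i(u_n)\le N+2$ of Proposition \ref{Pmorse} as a stopping mechanism, and then derive the pointwise decay \eqref{eq10} by comparison with an explicit exponential supersolution once no further blow-up is possible. The mechanism tying the iteration to the Morse index is Lemma \ref{th2}(d): each new blow-up point $P_n^i$ supplies a cut-off $\psi_n^i\in C_0^\infty(\Omega)$ supported in $B_{R\e_n}(P_n^i)$ on which the quadratic form $\varphi\mapsto\int[|\nabla\varphi|^2+(\lambda_n-\eta_n(p-1)u_n^{p-2})\varphi^2]\,dx$ is strictly negative; if these supports are pairwise disjoint, the $\psi_n^i$ are linearly independent and the number of blow-up points is bounded by $i(u_n)$.

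As the base case I would choose $P_n^1\in\Omega$ to be a global maximum of $u_n$; by Remark \ref{Lsat} and $\lambda_n\to\infty$, $u_n(P_n^1)\ge\eta_n^{-1}\lambda_n^{1/(p-2)}\to\infty$, and being a global maximum trivially ensures the ball condition in Lemma \ref{th2} for any $\widetilde R_n\to\infty$. For the inductive step, given $P_n^1,\dots,P_n^j$ satisfying \eqref{eq13}--\eqref{eq14} together with disjointly supported cut-offs $\psi_n^i$, I would test whether $\eta_n(p-1)u_n^{p-2}\le\lambda_n/2$ on $\Omega\setminus\bigcup_{i\le j}B_{R_0\e_n}(P_n^i)$ for some fixed $R_0>0$ and all large $n$. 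If so, stop. Otherwise there exists $Q_n\in\Omega$ with $|Q_n-P_n^i|/\e_n\to\infty$ for every $i\le j$ and $u_n(Q_n)\gtrsim\lambda_n^{1/(p-2)}\to\infty$; a doubling argument then produces a point $P_n^{j+1}$ near $Q_n$ (in $\e_n$-units) satisfying the maximum-on-a-large-ball hypothesis of Lemma \ref{th2}. Applying Lemma \ref{th2} at $P_n^{j+1}$ yields \eqref{eq13}--\eqref{eq14}, the $C^1_{\text{loc}}$-convergence of $U_n$ to the Kwong profile $U$, and a cut-off $\psi_n^{j+1}$ supported in $B_{R\e_n}(P_n^{j+1})$ with a universal $R$ (universal because the limit is always the same $U$). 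The separation $|P_n^{j+1}-P_n^i|/\e_n\to\infty$ dominates $R$, so $\supp\psi_n^{j+1}$ is disjoint from the earlier supports, giving $j+1\le i(u_n)\le N+2$ and forcing termination after at most $N+2$ steps.

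When the iteration terminates with $k$ points, the stopping condition yields the linear inequality $-\Delta u_n+(\lambda_n/2)u_n\le 0$ on $\Omega\setminus\bigcup_{i=1}^{k}B_{R_0\e_n}(P_n^i)$, while Lemma \ref{th2}(c) combined with the exponential decay \eqref{eq1}--\eqref{eq2} of $U$ gives $u_n\le C\lambda_n^{1/(p-2)}$ on every sphere $\partial B_{R_0\e_n}(P_n^i)$ (after enlarging $R_0$ if necessary). Comparing with the supersolution $\phi_n(x):=C\lambda_n^{1/(p-2)}\sum_{i=1}^{k}e^{-\gamma\lambda_n^{1/2}(|x-P_n^i|-R_0\e_n)}$ for $\gamma\in(0,1/\sqrt{2}]$, which dominates $u_n$ on the boundary of the exterior region and vanishes at infinity, yields \eqref{eq10} via the maximum principle. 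The main obstacle I anticipate is the uniformity of the radius $R$ across iteration steps together with the doubling step that upgrades $Q_n$ into a bona fide maximum point $P_n^{j+1}$ satisfying Lemma \ref{th2}'s hypotheses; both rest on the uniqueness of the limit profile guaranteed by Kwong's theorem and on the $C^1_{\text{loc}}$-control of the rescaled $U_n$ outside large balls provided by the exponential decay of $U$.
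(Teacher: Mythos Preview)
Your plan matches the paper's proof in structure: iterative extraction of blow-up points capped by the Morse-index bound through Lemma \ref{th2}(d), followed by exponential comparison once the iteration stops. The only real difference is how you produce $P_n^{j+1}$. The paper does not invoke a doubling lemma; it simply sets $P_n^{j+1}$ equal to the global maximum of $u_n$ on $\{d_n(x)\ge R\e_n\}$, where $d_n(x)=\min_{i\le j}|x-P_n^i|$, and then observes that with $\widetilde R_{j+1,n}:=\tfrac12\, d_n(P_n^{j+1})/\tilde\e_{j+1,n}$ one has $B_{\widetilde R_{j+1,n}\tilde\e_{j+1,n}}(P_n^{j+1})\subset\{d_n(x)\ge R\e_n\}$, so $P_n^{j+1}$ is automatically a maximum on a ball of growing radius and Lemma \ref{th2} applies verbatim. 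Your doubling step, by contrast, delivers only a two-sided bound $u_n\le 2^{2/(p-2)}u_n(P_n^{j+1})$ on a ball of radius $k\,\tilde\e_{j+1,n}$ with $k$ fixed (or slowly growing), not an actual maximum there; this does not literally match the hypothesis of Lemma \ref{th2}, and while the lemma's proof can be adapted to that weaker input, the paper's device sidesteps the issue and is shorter. The decay step is essentially the same in both proofs.
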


\begin{proof}
   We divide the proof into two main steps. We argue up to suitable subsequences.

\noindent \textbf{Step 1.} There exist $k \leq {N+2}$ sequences $P_n^1, \ldots, P_n^k$ that satisfy the conditions \eqref{eq13} and \eqref{eq14}, and in addition we have
\begin{equation} \label{eq15}
\lim_{R \to \infty} 
\left(
\limsup_{n \to \infty}  
\left[ \e_n^{
\frac{2}{p-2}} 
\max_{d_n(x) \geq R \e_n} 
 u_n(x) 
\right]
\right) = 0, \quad 
\end{equation}
where the function 
\[
d_n(x) = \min_{i=1, \ldots, k} | x - P_n^i |
\]
measures the Euclidean distance from the points $\{P_n^1, \ldots, P_n^k\}$. 

Let us first consider $P_n^1$, which corresponds to the  global maximum of $u_n$, i.e., $u_n(P_n^1) = \max_{\Omega} u_n(x)$. If \eqref{eq15} holds for $\{P_n^1\}$, we can set $k = 1$, and \eqref{eq13}, \eqref{eq14} directly follow, using Lemma \ref{th2}. 

Now, assume that \eqref{eq15} does not hold, for $d_n(x)=|x-P^1_n|$.
Then, we have
\[
\lim_{R \to +\infty} 
\left(
\limsup_{n \to +\infty} 
\left[
\varepsilon_n^{\frac{2}{p-2}} 
\max_{| x - P_n^1| \geq R \varepsilon_n} u_n
\right]
\right) = 4 \delta > 0
\]
for some $\delta>0$. 
 
Invoking Lemma \ref{th2} and Remark \ref{Lsat}, we deduce that
\begin{equation} \label{eq16}
\varepsilon_n^{\frac{2}{p-2}} u_n(\varepsilon_n y + P_n^1) = : U_n^1(y) \to U(y) \quad \text{in } C^1_{\text{loc}}(\mathbb{R}^N), 
\end{equation}
as $n \to \infty$, where $U$ is a solution of \eqref{Pinf}. 
Since $U(y) \to 0$ as $| y | \to \infty$, we can choose $R$ sufficiently large such that
\begin{equation} \label{eq17}
U(y) \leq \delta \quad \text{for all } |y| \geq R. 
\end{equation}
By further increasing $R$ if necessary, we can assume without loss of generality that
\begin{equation} \label{eq18}
\varepsilon_n^{\frac{2}{p-2}} 
\max_{| x - P_n^1| \geq R \varepsilon_n} u_n \geq 2\delta.
\end{equation}
From $u_n = 0$ on $\partial \Omega$ and $u_n(x) \to 0$ as $|x| \to \infty$, it follows that there exists a point $P_n^2 \in   B^c_{R \varepsilon_n}(P_n^1)$ such that
\beq
\label{1707}
u_n(P_n^2) = \max_{  B^c_{R \varepsilon_n}(P_n^1)} u_n.
\eeq
Coupling \eqref{eq16} and \eqref{eq17}, we can see that
\beq
\label{1654}
\frac{| P_n^2 - P_n^1 |}{\varepsilon_n} \to \infty.
\eeq
Indeed, if   $\frac{| P_n^2 - P_n^1 |}{\varepsilon_n} \to R' \geq R$, then we would have
\[
(\varepsilon_n)^{\frac{2}{p-2}} u_n(P_n^2) = U_n^1\left(\frac{P_n^2 - P_n^1}{\varepsilon_n}\right) \to U(R') \leq \delta,
\]
contradicting \eqref{eq18}. 
This establishes that the first condition in \eqref{eq13} is satisfied for $\{P_n^1, P_n^2\}$. 
Observe that \eqref{eq18} implies $u_n(P^2_n)\to \infty$, as $n\to\infty$.
Then, define
\[
\tilde{\varepsilon}_{2,n} : = u_n(P_n^2)^{-\frac{p-2}{2}},
\]
and let
\beq\label{1701}
\widetilde R_{2,n} = \frac{1}{2} \frac{|P_n^2 - P_n^1 |}{\tilde{\varepsilon}_{2,n}}.
\eeq
From \eqref{eq18}, it follows that
\[
\tilde{\varepsilon}_{2,n} \leq (2\delta)^{-\frac{p-2}{2}} \varepsilon_n,
\]
which implies
\[
\widetilde R_{2,n} \geq \frac{(2\delta)^{\frac{p-1}{2}}}{2} \frac{| P_n^2 - P_n^1 |}{\varepsilon_n} \to \infty \quad \text{as } n \to \infty.
\]
 
This ensures that
\beq
\label{1706}
u_n(P_n^2) = \max_{  B_{\tilde R_{2,n} \tilde{\varepsilon}_{2,n}}(P_n^2)} u_n.
\eeq

Indeed, from \eqref{1701} and \eqref{1654} we infer
\[
| x - P_n^1 | \geq | P_n^2 - P_n^1 | - | x - P_n^2 | \geq \frac{1}{2} | P_n^2 - P_n^1| \geq R \varepsilon_n,\qquad \forall x \in B_{\tilde R_{2,n} \tilde{\varepsilon}_{2,n}}(P_n^2).
\]
Thus
\[
 B_{\tilde R_{2,n} \tilde{\varepsilon}_{2,n}}(P_n^2) \subset B^c_{R \varepsilon_n}(P_n^1)
\]
and \eqref{1706} follows from \eqref{1707}.

Since $\widetilde R_{2,n} \to \infty$ as $n \to \infty$, Lemma \ref{th2} guarantees that the second condition in \eqref{eq13} and \eqref{eq14} are also satisfied, for $\{P_n^1, P_n^2\}$, by choosing $R_{n}:=\tfrac{\tilde\e_{2,n}}{\e_n}\widetilde R_{2,n}$ (we can also observe that $R_{n}=\tfrac12 |P^2_n-P^1_n|$). 
If \eqref{eq15} holds for $ \{P_n^1, P_n^2\} $, the proof of Step 1 is concluded. 

\smallskip
 
Otherwise, the procedure is continued iteratively. Suppose there exist $P_n^1, \ldots, P_n^s$ such that \eqref{eq13} and \eqref{eq14} are satisfied but \eqref{eq15} fails. As previously, we select $R > 0$ sufficiently large and extract a subsequence such that
\[
\left(\varepsilon_n\right)^{\frac{2}{p-2}} \max_{\{d_n(x) \geq R \varepsilon_n\}} u_n(x) \geq 2 \delta,
\]
where
\[
d_n(x) = \min_{i=1, \ldots, s} | x - P_n^i |.
\]
Using Lemma \ref{th2}, we infer that for every $i\in\{1,\ldots,s\}$
\begin{equation} \label{eq19}
\left(\varepsilon_n\right)^{\frac{2}{p-2}} u_n\left(\varepsilon_n y + P_n^i\right) = :U_n^i\left( y\right) \to  U( y),
\end{equation}
 in $C^1_{\text{loc}}(\mathbb{R}^N)$ as $n \to \infty$.  
 Now, we repeat the above argument, starting by defining $P_n^{s+1}$ such that
\begin{equation} \label{eq20}
 u_n(P_n^{s+1}) = \max_{\{d_n(x) \geq R \varepsilon_n\}} u_n(x) \geq 2 \delta\varepsilon_n^{-\frac{2}{p-2}}. \end{equation}
From \eqref{eq19} and the condition $ U(y) \leq \delta$ for $|y| \geq R$, it follows as before that
\[
\frac{| P_n^{s+1} - P_n^i |}{\varepsilon_n} \to \infty, \quad \text{as } n \to \infty, \quad \text{for all } i = 1, \ldots, s.
\]
Thus, the first condition in \eqref{eq13} holds for $\{P_n^1, \ldots, P_n^{s+1}\}$. 
Setting $\tilde{\varepsilon}_{s+1,n} = u_n(P_n^{s+1})^{-\frac{p-2}{2}}$ and $\widetilde  R_{s+1,n} = \frac{1}{2} \frac{d_n(P_n^{s+1})}{\tilde{\varepsilon}_{s+1,n}}$, \eqref{eq20} implies that
\[
\tilde{\varepsilon}_{s+1,n} \leq (2 \delta)^{-\frac{p-2}{2}} \varepsilon_n,  
\]
and, as a consequence, that $\widetilde R_{s+1,n} \to \infty$  as  $n \to \infty$. 
Moreover, arguing as above, we get 
\[
u_n(P_n^{s+1}) = \max_{  B_{R_{s+1,n} \tilde{\varepsilon}_{s+1,n}}(P_n^{s+1})} u_n(x),
\]
and Lemma \ref{th2} ensures that the second condition in \eqref{eq13}, and \eqref{eq14}, hold also for $\{P_n^1, \ldots,$ $P_n^{s+1}\}$. 
Furthermore, Lemma \ref{th2} guarantees the existence of functions $\{\psi_n^i\}_{i=1}^{s+1} \subset C_c^0(\Omega)$ satisfying \eqref{eq12}, with $\text{supp}(\psi_n^i) \subset B_{R \e_n} (P_n^i)$, for a suitable $R>0$.
From \eqref{eq13}, we infer that the supports of $\psi_n^1, \ldots, \psi_n^{s+1}$ are disjoint for large $n$.
As a consequence, 
\[
s+1 \leq \limsup_{n \to +\infty} i(u_n)\le N+2.
\]
So, the iterative process stops after $k$ steps, with $k \leq N+2$, and the existence of sequences $\{P_n^1, \ldots, P_n^k\}$ that satisfy conditions \eqref{eq13}, \eqref{eq14}, and \eqref{eq15} is established.

\medskip

\noindent \textbf{Step 2.} Let $P_n^1, \dots, P_n^k$ be defined as in Step 1. Then, there exist constants $\gamma$ and $C > 0$ such that  
\begin{equation*}
    u_n(x) \leq C \lambda_n^{\frac{1}{p-2}} \sum_{i=1}^k e^{-\gamma \lambda_n^{\frac{1}{2}} |x - P_n^i|} \quad \forall x \in \Omega, \quad n \in \mathbb{N}.
\end{equation*}

Using \eqref{eq15}, for a sufficiently large $R > 0$ and $n \geq n(R)$, we deduce that  
\begin{equation} \label{eq21}
    \lambda_n^{-\frac{1}{p-2}}\max_{d_n(x) \geq R \lambda_n^{-1/2}} u_n(x) \leq  {2^{-\frac{1}{p-2}}}.
\end{equation}  
 
Thus, for large $n$,
\begin{equation} 
\label{eq24}
    \tilde{a}_n(x): =\lambda_n - \eta_n u_n^{p-2}(x) \geq \lambda_n \left(1-\frac{\eta_n}{2} \right) \geq \frac{\lambda_n }{4}, \quad \text{in } \{d_n(x) > R\lambda^{-1/2}_n\}. 
\end{equation}  
Next, let us calculate the  the linear operator $-\Delta + \tilde{a}_n(x)$ on the function  
\begin{equation*}
    \psi^i_n(x) := e^{-\gamma \lambda_n^{\frac{1}{2}} |x - P^i_n|}
\end{equation*}
within the region $\{d_n(x) \geq R\lambda_n^{-1/2}\}$. For sufficiently large $n$, thanks to \eqref{eq24} we find that  
\begin{equation} \label{eq22}
    \left(  -\Delta + \tilde{a}_n(x) \right) \psi_i^n = \lambda_n \psi^i_n \left[ -\gamma^2 + (N-1)\frac{\gamma}{\lambda_n^{\frac{1}{2}} |x - P^i_n|} + \lambda_n^{-1}\tilde{a}_n(x) \right] > 0,
\end{equation}  
provided that $\gamma$ is chosen small enough. 
Moreover, for large $R$, we observe that  
\begin{equation}
\label{eq23b}
   \left( e^{\gamma R } \psi_n^i(x)- \lambda_n^{-\frac{1}{p-2}} u_n(x)\right)_{\big| \partial B_{R \lambda_n^{- {1}/{2}}}(P_n^i)} \to 1-  U( R) > 0,\ \mbox{ as }n \to \infty.
\end{equation}
 Define  
\begin{equation*}
    \psi_n = e^{\gamma R} \lambda_n^{\frac{1}{p-2}} \sum_{i=1}^k \psi^i_n,
\end{equation*}
and consider the operator $\mathcal{L}_n := -\Delta+\lambda_n-\eta_n u_n^{p-2}$. Note that $\mathcal{L}_n u_n = 0$. In view of \eqref{eq22}, we deduce  
\begin{equation*}
    \mathcal{L}_n (\psi_n - u_n) =  e^{\gamma R} \sum_{i=1}^k \left(  -\Delta + \tilde{a}_n(x) \right) \psi_i^n \geq 0 \quad \text{in } \{d_n(x) > R\lambda^{-1/2}_n\}.
\end{equation*}
Furthermore, by using \eqref{eq23b} we get $\psi_n - u_n \geq 0$ on $\{d_n(x) = R\lambda^{-1/2}_n\} \cup \partial \Omega$, while $(\psi_n - u_n)(x) \to 0$ as $|x| \to \infty$. Additionally, from \eqref{eq13} and \eqref{eq14}, we see that  
\begin{equation*}
\{d_n(x) = R\lambda^{-1/2}_n\} = \cup_{i=1}^k \partial B_{R \lambda_n^{-\frac{1}{2}}}(P_n^i) \subset \Omega.
\end{equation*}
Thus, by the minimum principle,  
\begin{equation*}
    u_n \leq \psi_n = e^{\gamma R} \lambda_n^{\frac{1}{p-2} }\sum_{i=1}^k e^{-\lambda_n^{\frac{1}{2}} |x - P^i_n|}
\end{equation*}
within $\{d_n(x) \geq R \lambda^{-1/2}_n\}$, for large $R$ and $n \geq n(R)$. 
Finally, using Lemma \ref{th2} $(a)$, we conclude that  
\begin{equation*}
    u_n(x) \leq \max_{\Omega} u_n = \tilde{\varepsilon}_n^{-\frac{2}{p-2}} \leq C \lambda_n^{\frac{1}{p-2}}e^{\gamma R} \sum_{i=1}^k e^{-\lambda_n^{\frac{1}{2}} |x - P_i^n| },
\end{equation*}
for some $C > 0$, when $d_n(x) \leq R\lambda^{-1/2}_n$. 
Therefore, \eqref{eq10} holds throughout $\Omega$ with a constant $C e^{\gamma R}$ for all $n \geq n(R)$. 
Enlarging $C$ if necessary, we see that \eqref{eq10} is valid in $\Omega$ for every $n \in \mathbb{N}$.

\end{proof}
 
{\em Proof of Theorem \ref{T1}}.\  Here, we consider the sequences $(\lambda_n)_n \subset \R$ and $(u_n)_n \subset H^1_0(\Omega)$ as in Notation \ref{nota2}.

\smallskip

\noindent \textbf{Step 1.}  {\em $(\lambda_n)_n \subset \R$ is bounded.}

\smallskip

    Arguing by contradiction, let as assume that $\lambda_n \to  \infty$. 
Using the notation of Proposition \ref{th3}, we claim 
    \begin{equation} \label{eq11}
        \lim_{n \to \infty}\left| \lambda_n^{\frac{N}{2}-\frac{2}{p-2}} \int_\Omega u_n^2 \, dx - \sum_{j=1}^k \int_{B_R(0)} \left( \widetilde{U}_n^j \right)^2 \, dx  \right|=\infty
    \end{equation}
    where $\widetilde{U}_n^j (x) = \varepsilon_n^{\frac{2}{p-2}} u_n(P_n^j+ \varepsilon_n x)$, with $\varepsilon_n = \lambda_n^{-\frac{1}{2}}$, and $R>0$. 
    In fact, on the one hand 
    \begin{equation*}
        \lim_{n \to \infty} \lambda_n^{\frac{N}{2}-\frac{2}{p-2}} \int_\Omega u_n^2 \, dx= \lim_{n \to \infty} \lambda_n^{\frac{N}{2}-\frac{2}{p-2}} m^2= \infty,
    \end{equation*}
    being $\frac{N}{2}-\frac{2}{p-2}>0$,  and on the other hand
    \begin{equation*}
        \lim_{n \to \infty }\int_{B_R(0)} \left(\widetilde{U}_n^j\right)^2 \, dx = \lim_{n \to \infty} \lambda_n^{\frac{N}{2}-\frac{2}{p-2}} \int_{B_R \varepsilon_n(P_n^j)} u_n^2 \, dx =  \int_{B_R(0)} U_0^2 \, dx < \infty,\ i=1,\ldots,k,
    \end{equation*}
   by Lemma \ref{th2} $(e)$. 
   Since $k\le N+2$, the claim follows. 

  \smallskip
  
   Now, from \eqref{eq10} we infer
   \begin{align*}
        \lim_{n \to \infty} &\left| \lambda_n^{\frac{N}{2}-\frac{2}{p-2}} \int_\Omega u_n^2 \, dx - \sum_{j=1}^k \int_{B_R(0)} \left( \widetilde{U}_n^j \right)^2 \, dx  \right|=\lim_{n \to \infty} \lambda_n^{\frac{N}{2}-\frac{2}{p-2}} \int_{\left(\cup_{j=1}^k B_{R \varepsilon_n}(P_n^j)\right)^c} u_n^2 \, dx \\
        & \quad \leq C \lambda_n^{\frac{N}{2}}\int_{\R^N} \sum_{i=1}^k e^{-2 \gamma  \lambda_n^{\frac{1}{2}} |x - P_n^i|} \leq \widetilde{C}\int_0^{+\infty} \varrho^{N-1} e^{-\varrho}\, d \varrho < \infty
    \end{align*}
    for some $C, \widetilde{C}>0$. Comparing this with \eqref{eq11} we have a contradiction. 

\medskip

\noindent \textbf{Step 2.} {\em $(u_n)_n \subset H^1_0(\Omega)$ is bounded.} 

\smallskip

Since
\[
\int  \left(|\nabla u_n|^2 + \lambda_n u_{n}^2\right)\,dx = \eta_n \int  |u_n|^p\, dx,
\]
we obtain
\[
c^{\eta_n}_{\Omega,m} = I_{\eta_n}(u_{n}) = \left( \frac{1}{2} - \frac{1}{p}\right) \int_{\Omega} |\nabla u_{n}|^2\,dx - \frac{\lambda_n m}{p}.
\]
Thus,
\begin{equation}\label{eq27}
\left( \frac12 -\frac{1}{p}\right) \int_{\Omega} |\nabla u_{n}|^2\,dx = c_{\Omega,m}^{\eta_n} + \frac{\lambda_n m}p.
\end{equation}
From this relation, the boundedness of $(u_n)_n$ in $H^1_0(\Omega)$ follows, by Lemma \ref{LperPS} and because $(\lambda_n)_n$ is bounded. 

\medskip

\noindent \textbf{Step 3.} {\em Fixed $\varrho_m=\varrho_{ps}$,  if $\Omega^c\subset B_{\varrho_m}$ then $u_n\to u$ in $H^1_0(\Omega)$ for some $u\in S_m$, up to a subsequence.}

\smallskip

First, observe that $(u_n)_n$ bounded in $H^1(\Omega)$ and $\eta_n \to 1$, imply that $(u_n)_n$ is a Palais-Smale sequence for $I$ constrained on $S_m$, at the level $c_{\Omega,m}$.
 Moreover, since $(\lambda_n)_n$ is also bounded, and $\lambda_n>0$, we can also assume that $\lambda_n\to\lambda\in[0,\infty)$. 

If $u_n \rightharpoonup u$ weakly but not strongly in $H^1_0(\Omega)$, up to a subsequence, then we can argue exactly as in Proposition \ref{P5.1}, and conclude that either $\lambda =0$ and $|u|_2=m$ thanks to Hypothesis \eqref{NEm} or $\lambda>0$. 
 Hence, taking into account Lemma \ref{LperPS}, the proof in \cite[\S 3.3]{MR4304693} works, proving that, in fact, $u_n\to u$ strongly in $H^1_0(\Omega)$.
 
\smallskip

\noindent \textbf{Step 4.}  {\em $u$ is the solution we are looking for.}

\smallskip

Since $(u_n)_n$ is a PS sequence, $u_n\to u$ strongly in $H^1_0(\Omega)$, and $u_n\ge 0$, then $u$ is a nonnegative solution of \eqref{Pm}.
Then, we can conclude that $u>0$ by the Harnack inequality.

For the estimate on the Morse index, one can proceed as in Proposition \ref{Pmorse}.
 
 \qed
 

\section{Proof of Theorem \ref{T2}}


To prove Theorem \ref{T2}, we use the same arguments as used in the proof of Theorem \ref{T1}.
If $\Omega^c\subset B_{\bar\varrho}$, with $\bar\varrho$ fixed arbitrarily large, we will consider the asymptotic behavior of the energy of the test functions $Z_{m,\bar\varrho}[y,h]$, as $m\to\infty$, and verify that for large $m$ there is a linking structure as in \S \ref{S3}, in the compactness range highlighted in Lemma \ref{LperPS}.

\smallskip
\begin{lemma}
\label{L7}
Let $\Omega$ an open set in $\R^N$, if $\Omega^c\subseteq B_{\bar\varrho}$, then there exist $m_{\Omega}$, and $\bar h >0$, such that, 
setting $Z_m:=Z_{m,\bar\varrho}$ as in \eqref{1624}, for every $m>m_{\Omega}$,
\beq
\label{1723}
  \max_{(y,h)\in  S}I(Z_m[y,h])<\min\left\{2,1+\frac2N\right\}c_m,
\eeq
where $S:=\R^N\times[-\bar h,\bar h]$ and $c_{\Omega,m}$ has been introduced in \eqref{1453}.
\end{lemma}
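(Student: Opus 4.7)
My plan is to exploit the fact that in the mass-supercritical regime the mountain-pass profile $\widetilde Z_m$ spreads out on a large spatial scale as $m\to\infty$: with $\mu=\mu(m)$ from \eqref{1725} we have $\mu\to\infty$, so the cutoff $\vartheta_{\bar\varrho}$, which acts on a fixed ball of radius $2\bar\varrho$, removes only a negligible fraction of mass and energy. The first step is to compute $I(\widetilde Z_m[y,h])$ explicitly. Using $\widetilde Z_m(x)=\mu^{-2/(p-2)}U(x/\mu)$ together with $(h\ast u)(x)=e^{Nh/2}u(e^hx)$, a direct change of variable gives
\begin{equation*}
I(\widetilde Z_m[y,h])=\mu^{\alpha}g(h),\qquad g(h):=\tfrac12 e^{2h}A-\tfrac1p e^{sh}B,
\end{equation*}
with $\alpha:=N-2-\tfrac{4}{p-2}$, $s:=\tfrac{N(p-2)}{2}$, $A:=\int|\nabla U|^2$, $B:=\int U^p$. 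The Pohozaev identity applied to $U$ yields $A=sB/p$, so $g'(0)=0$ and $g''(0)=A(2-s)<0$ in the mass-supercritical case $s>2$; hence $g$ attains its strict maximum at $h=0$, and $I(\widetilde Z_m[y,h])\le\mu^\alpha g(0)=c_m$ uniformly in $y\in\R^N$. Moreover, since $p<2^*$ forces $\alpha<0$, one reads $c_m\to 0^+$ as $m\to\infty$.

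Next, I would estimate, uniformly in $(y,h)\in S$, the perturbation produced by the cutoff and the renormalization. Writing $V:=\widetilde Z_m[y,h]$ and $v:=\vartheta_{\bar\varrho}V$, I pull back the integrals over $B_{2\bar\varrho}$ via the substitution $w=e^h(x-y)/\mu$: the image set $D_{y,h}:=(e^h/\mu)(B_{2\bar\varrho}-y)$ has Lebesgue measure of order $\mu^{-N}$ uniformly in $y$, so using only the boundedness of $U$ and $\nabla U$ one gets
\begin{equation*}
\int_{B_{2\bar\varrho}}V^2\,dx\le C\mu^{-N}m^2,\qquad \int_{B_{2\bar\varrho}}|\nabla V|^2\,dx+\int_{B_{2\bar\varrho}}V^p\,dx\le C\mu^{-N}\mu^{\alpha},
\end{equation*}
with constants independent of $(y,h)\in S$. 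Consequently $|v|_2^2=m^2(1+O(\mu^{-N}))$, the renormalization factor $\tau:=m/|v|_2$ equals $1+o(1)$ uniformly, and each of $\int|\nabla v|^2-\int|\nabla V|^2$ and $\int|v|^p-\int V^p$ is $o(c_m)$. Expanding $I(Z_m[y,h])=I(\tau v)=\tfrac{\tau^2}{2}\int|\nabla v|^2-\tfrac{\tau^p}{p}\int|v|^p$ then shows that all $\tau$-corrections multiply $O(c_m)$ quantities by a vanishing factor, and therefore also contribute $o(c_m)$.

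Combining the two steps,
\begin{equation*}
\sup_{(y,h)\in S}I(Z_m[y,h])\le \sup_{(y,h)\in S}\mu^\alpha g(h)+o(c_m)=c_m+o(c_m),
\end{equation*}
which for $m$ large enough is strictly less than $(1+\tfrac{2}{N})c_m=\min\{2,1+\tfrac{2}{N}\}c_m$, yielding \eqref{1723}. The main technical hurdle is the uniformity of the cutoff error over the noncompact range $y\in\R^N$; this is handled by the observation that the estimate $\int_{D_{y,h}}U^2\,dw\le|U|_\infty^2|D_{y,h}|=O(\mu^{-N})$ does not depend on $y$, and for $|y|\gg\mu$ the exponential decay of $U$ only improves the bound. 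Once this is secured, everything else reduces to scaling computations and the smallness of $\tau-1$.
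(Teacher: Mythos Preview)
Your proof is correct and follows essentially the same route as the paper: both establish the uniform expansion $I(Z_m[y,h])=\mu^{\alpha}\,I(h\ast U)+o(\mu^{\alpha})$ on $S$ (note $\alpha=N-2-\tfrac{4}{p-2}=-(\tfrac{2p}{p-2}-N)$) by bounding the cutoff error on $B_{2\bar\varrho}$ via $|U|_\infty$, $|\nabla U|_\infty$ and the small volume $|D_{y,h}|\sim\mu^{-N}$, and then conclude from $\max_{h}I(h\ast U)=I(U)$. The only difference is that the paper also fixes a specific $\bar h$ verifying the boundary estimates \eqref{1718}, which are used afterwards for the linking structure in the proof of Theorem~\ref{T2} but are not needed for \eqref{1723} itself; your argument in fact shows \eqref{1723} holds for any fixed $\bar h>0$, with $m_\Omega$ depending on $\bar h$.
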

\proof
In the notation of \S \ref{S3}, from direct computations we infer
\beq
\label{1732}
I( \widetilde Z_m[y,h])=\mu^{-\left(\frac{2p}{p-2}-N\right)}I(h\ast U ),\qquad \forall (y,h)\in\R^N\times \R,
\eeq
where $U=\widetilde Z_{m_0}[0,0]$.
Here, $\mu=\mu(1,m)$ verifies $\mu\to\infty$ as $m\to\infty$ (see \eqref{1725}).
From \eqref{1732} it follows, in particular,  that $c_m=I(\widetilde Z_m[0,0])=\mu^{-\left(\frac{2p}{p-2}-N\right)}c_{m_0}$,
because $c_{m_0}=I(U)=\max_{h\in\R} I(h\ast  U )$.
Then, taking into account that $U$ is a mountain pass solution, and the behavior of the function $h\mapsto I(h\ast U)$, it is easily seen that $\bar h>0$ exists such that 
\beq\label{1718}
I(\widetilde Z_m[y,\bar h])<0,\qquad I(\widetilde Z_m[y,-\bar h])<\frac12 c_m,\qquad\forall m>0,\ \forall y\in\R^N.
\eeq
Since, by standard computations, we can see
$$
Z_m[y,\pm \bar h] \to \widetilde Z_m[y,\pm \bar h]\ \mbox{ in }H^1(\R^N),\mbox{ as } m\to\infty, \mbox { uniformly in }y\in\R^N,
$$
we get that for large $m$ inequalities in \eqref{1718} hold with $Z_m$ in place of $\widetilde Z_m$. 

Our next claim is to show
\beq
\label{eas}
I(Z_m[y,h])=\mu^{-\left(\frac{2p}{p-2}-N\right)}I(h\ast U )+o(\mu^{-\left(\frac{2p}{p-2}-N\right)}),\ \mbox{ uniformly for }(y,h)\in \R^N\times[-\bar h,\bar h].
\eeq
By \eqref{1624} and \eqref{1727}, to prove \eqref{eas} we have to estimate 
\beq
\label{(iii)}
\begin{array}{cl}
(i)   &\qquad \displaystyle{\int |\vartheta(x)\widetilde Z_m [y,h] |^pdx}\\[10pt]
(ii)  &\qquad \displaystyle{\int |\D \big(\vartheta(x)\widetilde Z_m [y,h] \big)|^2dx}\\[10pt]
(iii) &\qquad \displaystyle{\int |\vartheta(x)\widetilde Z_m [y,h] |^2dx},\\
\end{array}
\eeq
where in the cut-off function the fixed radius $\bar\varrho$ has been omitted.

As for $(i)$:
\begin{equation}
    \label{i}
\begin{split}
\int |\vartheta\widetilde Z_m [y,h] |^pdx=&\int |\widetilde Z_m [y,h] |^pdx+\int_{B_{2\bar\varrho}} (\vartheta^p-1)|\widetilde Z_m [y,h] |^pdx\\[4pt]
=& \mu^{-\left(\frac{2p}{p-2}-N\right)}\int |h\ast U|^pdx+o\left(\mu^{-\left(\frac{2p}{p-2}-N\right)} \right) ;
\end{split}
\end{equation}
uniformly for $(y,h)\in \R^N\times[-\bar h,\bar h]$.
Indeed
$$
\int |\widetilde Z_m [y,h] |^pdx=(e^h)^{\frac{N}{2}(p-2)}\int |\widetilde Z_m[0,0]|^pdx=\mu^{-\left(\frac{2p}{p-2}-N\right)} \int |h\ast U|^pdx,
$$
and
\begin{equation*}
 \begin{split}
\left|\int_{B_{2\bar\varrho}} (\vartheta^p-1)|\widetilde Z_m [y,h] |^pdx\right|
&\le 
(e^{\bar h})^{\frac{N}{2}p}\int_{B_{2\bar\varrho} } |\widetilde Z_m (e^h(x-y))|^pdx\\[4pt]
&\le
c\, \mu^{-\frac{2p}{p-2}}\int_{ B_{2\bar\varrho}}\max_{\R^N} U^pdx\\[4pt]
 &=
 o\left(\mu^{-\left(\frac{2p}{p-2}-N\right)} \right).
 \end{split}
\end{equation*}
 As for $(ii)$, we are proving:
\begin{equation}
    \label{ii}
\begin{array}{rlcc}
\displaystyle{\int |\D \big(\vartheta(x)\widetilde Z_m [y,h] \big)|^2dx} =&\displaystyle{\int \vartheta^2|\D \widetilde Z_m [y,h]|^2dx} & &  (a) \\[6pt]
&+\displaystyle{2\int_{B_{2\bar\varrho} } \vartheta\D\vartheta \cdot \widetilde Z_m [y,h]  \D \widetilde Z_m [y,h] \, dx} & &  (b)\\[6pt]
& +\displaystyle{ \int_{B_{2\bar\varrho} }|\D  \vartheta(x)|^2 |\widetilde Z_m [y,h] |^2dx} & &   (c) \\[6pt]
 = & \mu^{-\left(\frac{2p}{p-2}-N\right)} \int |\D(h\ast U)|^2dx+o\left( \mu^{-\left(\frac{2p}{p-2}-N\right)}\right),
\end{array}
\end{equation}
uniformly for $(y,h)\in \R^N\times[-\bar h,\bar h]$.
Indeed, let us estimate $(a), (b)$, and $(c)$:
\begin{equation}
\tag{$a$}
\begin{split}
\int \vartheta^2|\D \widetilde Z_m [y,h]|^2dx &=   \int  |\D \widetilde Z_m [y,h]|^2dx+\int_{B_{2\bar\varrho} } (\vartheta^2-1)|\D \widetilde Z_m [y,h]|^2dx  \\
&= \mu^{-\left(\frac{2p}{p-2}-N\right)} \int|\D (h\ast U)|dx+ o\left( \mu^{-\left(\frac{2p}{p-2}-N\right)} \right),
\end{split}
\end{equation}
because of the same computations as in $(i)$ and
$$
\left|\int_{B_{2\bar\varrho} } (\vartheta^2-1)|\D \widetilde Z_m [y,h]|^2dx \right|\le c \mu^{-\frac{4}{p-2}}\int_{B_{2\bar\varrho} } 
(\mu^{-2}\max_{\R^N}|\D U|^2)dx=  o\left( \mu^{-\left(\frac{2p}{p-2}-N\right)} \right),
$$
uniformly for $(y,h)\in \R^N\times[-\bar h,\bar h]$.
Now, since $N>1$, 
\begin{equation}
\tag{$b$}
\begin{split}
\left|2\int_{B_{2\bar\varrho} } \vartheta\D\vartheta \cdot \widetilde Z_m [y,h]  \D \widetilde Z_m [y,h] \, dx\right| &\le
\int_{B_{2\bar\varrho} } |\D \vartheta^2|\widetilde Z_m [y,h]  |\D \widetilde Z_m [y,h]| \, dx\\
&\le c \mu^{-\frac{4}{p-2}}\max_{\R^N}  U\,\left(\mu^{-1}\max_{\R^N} |\D U|\right) |B_{2\bar\varrho}|
\\
&= o\left( \mu^{-\left(\frac{2p}{p-2}-N\right)} \right).
\end{split}
\end{equation}
Finally, taking into account $N\ge 3$,
\begin{equation}
\tag{$c$}
\begin{split} 
\int_{B_{2\bar\varrho} }|\D  \vartheta(x)|^2 |\widetilde Z_m [y,h] |^2dx& \le c\int_{B_{2\bar\varrho}} |\widetilde Z_m [y,h] |^2dx \\
& \le c\,\mu^{-\frac{4}{p-2}}\max_{\R^N}U\, |B_{2\bar\varrho}|\\
& = o\left( \mu^{-\left(\frac{2p}{p-2}-N\right)} \right),
\end{split}
\end{equation}
uniformly for $(y,h)\in \R^N\times[-\bar h,\bar h]$.

As for $(iii)$, arguing as in $(i)$ and $(ii)$:
$$
 \int |\vartheta(x)\widetilde Z_m [y,h] |^2dx=\int | \widetilde Z_m [y,h] |^2dx+\int_{B_{2\bar\varrho}}  (\vartheta^2-1) \widetilde Z_m [y,h] |^2dx=m^2+o(1),
$$
uniformly for $(y,h)\in \R^N\times[-\bar h,\bar h]$.

Hence, \eqref{(iii)}, and \eqref{eas}, are proved. 
As a consequence, 
$$
\sup_{(y,h)\in \R^N\times[-\bar h,\bar h]}I(Z_m[y,h]) \le \sup_{h\in [-\bar h,\bar h]} \left[
\mu^{-\left(\frac{2p}{p-2}-N\right)}I(h\ast U )+o(\mu^{-\left(\frac{2p}{p-2}-N\right)})\right]=c_m(1+o(1)),
$$
and so \eqref{1723} holds.
\qed

\medskip

{\em Proof of Theorem \ref{T2}}.\quad
In the notation of Lemma \ref{L7}, we proceed to show that for every $m>m_\Omega$ there exists $\bar R$ such that the linking structure described in \S \ref{S3} arises, setting $Q:=B_{\bar R}\times[-\bar h,\bar h]$.
Namely:
\beq
\label{172}
\max_{(y,h)\in\partial Q}I(Z_m[y,h])<c_{\Omega,m}\le \max_{(y,h)\in  Q}I(Z_m[y,h])<\min\left\{2,1+\frac2N\right\}c_m,
\eeq
where the last inequality has already been proved in Lemma \ref{L7}.

We first recall that $L_{\Omega_m}>c_m$, by Proposition \ref{P3.4}, and that, on the one hand, $L_{\Omega,m}$ does not depend on the choice of $\bar R$ in $Q$ and, on the other hand, $L_{\Omega,m}\le c_{\Omega,m}$.
Then, we observe that 
$$
\lim_{R\to\infty}\sup\{I(Z_m[y,h]) \ :\ h\in[-\bar{ h},\bar h], \ |y|=R\}=c_m,
$$
by Lemma \ref{L3.1}(iii) and \eqref{1638}. 
Hence,  \eqref{172} is proved, taking also into account that in \eqref{1453}  we can consider the trivial map $\gamma (y,h)={Z}_{m}  [y,h]$ for all $(y, h) \in  Q$.

By continuity, the estimates we found hold with $I_\eta$ in place of $I$, when $\eta\in  (\eta(m),1)$, for a suitable $\eta(m)\in(0,1)$.
Finally, we can work exactly as in Sections 2-5 to finish the proof.
 Observe that in this Theorem the Lagrange multiplier $\lambda$ cannot vanish, by assumption \eqref{NE}. 
\qed


\bigskip

\noindent {\bf Acknowledgements:}
The first author is supported by the EPSRC grant EP/W026597/1.
The second author is supported by the ``Gruppo Nazionale per l'Analisi Matematica, la Probabilit\`a e le loro Applicazioni (GNAMPA)'' of the {\em Istituto Nazionale di Alta Matematica (INdAM)}, by the MIUR Excellence Department Project MatMod@TOV  awarded to the Department of Mathematics, University of Rome Tor Vergata, CUP E83C23000330006, and by the  MUR-PRIN-2022AKNSE4\underline{ }003  ``Variational and Analytical aspects of Geometric PDEs''.


{\small

}

  

\begin{thebibliography}{10}

\bibitem{MR4232664}
L.~Appolloni and S.~Secchi.
\newblock Normalized solutions for the fractional {NLS} with mass supercritical
  nonlinearity.
\newblock {\em J. Differential Equations}, 286:248--283, 2021.

\bibitem{MR3539467}
T.~Bartsch, L.~Jeanjean, and N.~Soave.
\newblock Normalized solutions for a system of coupled cubic {S}chr\"odinger
  equations on {$\mathbb{R}^3$}.
\newblock {\em J. Math. Pures Appl. (9)}, 106(4):583--614, 2016.

\bibitem{MR4304693}
T.~Bartsch, R.~Molle, M.~Rizzi, and G.~Verzini.
\newblock Normalized solutions of mass supercritical {S}chr\"odinger equations
  with potential.
\newblock {\em Comm. Partial Differential Equations}, 46(9):1729--1756, 2021.

\bibitem{MR3895385}
T.~Bartsch and N.~Soave.
\newblock Multiple normalized solutions for a competing system of
  {S}chr\"odinger equations.
\newblock {\em Calc. Var. Partial Differential Equations}, 58(1):Paper No. 22,
  24, 2019.

\bibitem{BaWe05}
T.~Bartsch and T.~Weth.
\newblock Three nodal solutions of singularly perturbed elliptic equations on
  domains without topology.
\newblock {\em Ann. Inst. H. Poincaré Anal. Non Linéaire}, 22(3):259–281,
  2005.

\bibitem{BC87ARMA}
V.~Benci and G.~Cerami.
\newblock Positive solutions of some nonlinear elliptic problems in exterior
  domains.
\newblock {\em Arch. Rational Mech. Anal.}, 99(4):283--300, 1987.

\bibitem{BeLi83ARMA1}
H.~Berestycki and P.-L. Lions.
\newblock Nonlinear scalar field equations. {I}. {E}xistence of a ground state.
\newblock {\em Arch. Rational Mech. Anal.}, 82(4):313--345, 1983.

\bibitem{MR695536}
H.~Berestycki and P.-L. Lions.
\newblock Nonlinear scalar field equations. {II}. {E}xistence of infinitely
  many solutions.
\newblock {\em Arch. Rational Mech. Anal.}, 82(4):347--375, 1983.

\bibitem{MR4601303}
J.~Borthwick, X.~Chang, L.~Jeanjean, and N.~Soave.
\newblock Normalized solutions of {$L^2$}-supercritical {NLS} equations on
  noncompact metric graphs with localized nonlinearities.
\newblock {\em Nonlinearity}, 36(7):3776--3795, 2023.

\bibitem{MR4748624}
J.~Borthwick, X.~Chang, L.~Jeanjean, and N.~Soave.
\newblock Bounded {P}alais-{S}male sequences with {M}orse type information for
  some constrained functionals.
\newblock {\em Trans. Amer. Math. Soc.}, 377(6):4481--4517, 2024.

\bibitem{Ca03book}
T.~Cazenave.
\newblock {\em Semilinear {S}chr\"{o}dinger equations}, volume~10 of {\em
  Courant Lecture Notes in Mathematics}.
\newblock New York University, Courant Institute of Mathematical Sciences, New
  York; American Mathematical Society, Providence, RI, 2003.

\bibitem{CaLi82CMP}
T.~Cazenave and P.-L. Lions.
\newblock Orbital stability of standing waves for some nonlinear
  {S}chr\"{o}dinger equations.
\newblock {\em Comm. Math. Phys.}, 85(4):549--561, 1982.

\bibitem{CM10}
G.~Cerami and R.~Molle.
\newblock On some {S}chr\"{o}dinger equations with non regular potential at
  infinity.
\newblock {\em Discrete Contin. Dyn. Syst.}, 28(2):827--844, 2010.

\bibitem{MR1989833}
G.~Cerami and D.~Passaseo.
\newblock The effect of concentrating potentials in some singularly perturbed
  problems.
\newblock {\em Calc. Var. Partial Differential Equations}, 17(3):257--281,
  2003.

\bibitem{MR4755505}
X.~Chang, L.~Jeanjean, and N.~Soave.
\newblock Normalized solutions of {$L^2$}-supercritical {NLS} equations on
  compact metric graphs.
\newblock {\em Ann. Inst. H. Poincar\'e{} C Anal. Non Lin\'eaire},
  41(4):933--959, 2024.

\bibitem{cingolani2024}
S.~Cingolani, M.~Gallo, N.~Ikoma, and K.~Tanaka.
\newblock Normalized solutions for nonlinear {S}chr{\"o}dinger equations with
  ${L}^2$-critical nonlinearity.
\newblock {arXiv 2410.23733}, 2024.

\bibitem{MR1658565}
E.~N. Dancer.
\newblock Superlinear problems on domains with holes of asymptotic shape and
  exterior problems.
\newblock {\em Math. Z.}, 229(3):475--491, 1998.

\bibitem{MR3959930}
S.~Dovetta and L.~Tentarelli.
\newblock {$L^2$}-critical {NLS} on noncompact metric graphs with localized
  nonlinearity: topological and metric features.
\newblock {\em Calc. Var. Partial Differential Equations}, 58(3):Paper No. 108,
  26, 2019.

\bibitem{MR2825606}
P.~Esposito and M.~Petralla.
\newblock Pointwise blow-up phenomena for a {D}irichlet problem.
\newblock {\em Comm. Partial Differential Equations}, 36(9):1654--1682, 2011.

\bibitem{EvansBook}
L.~C. Evans.
\newblock {\em Partial differential equations}, volume~19 of {\em Graduate
  Studies in Mathematics}.
\newblock American Mathematical Society, Providence, RI, second edition, 2010.

\bibitem{GS81CPAM}
B.~Gidas and J.~Spruck.
\newblock Global and local behavior of positive solutions of nonlinear elliptic
  equations.
\newblock {\em Comm. Pure Appl. Math.}, 34(4):525--598, 1981.

\bibitem{MR619749}
B.~Gidas and J.~Spruck.
\newblock A priori bounds for positive solutions of nonlinear elliptic
  equations.
\newblock {\em Comm. Partial Differential Equations}, 6(8):883--901, 1981.

\bibitem{MR1814364}
D.~Gilbarg and N.~S. Trudinger.
\newblock {\em Elliptic partial differential equations of second order}.
\newblock Classics in Mathematics. Springer-Verlag, Berlin, 2001.
\newblock Reprint of the 1998 edition.

\bibitem{IkMi20CalcVar}
N.~Ikoma and Y.~Miyamoto.
\newblock Stable standing waves of nonlinear {S}chr\"{o}dinger equations with
  potentials and general nonlinearities.
\newblock {\em Calc. Var. Partial Differential Equations}, 59(2):Paper No. 48,
  20 pp,, 2020.

\bibitem{Je97NA}
L.~Jeanjean.
\newblock Existence of solutions with prescribed norm for semilinear elliptic
  equations.
\newblock {\em Nonlinear Anal.}, 28(10):1633--1659, 1997.

\bibitem{JJ97}
L.~Jeanjean.
\newblock Existence of solutions with prescribed norm for semilinear elliptic
  equations.
\newblock {\em Nonlinear Anal.}, 28(10):1633–1659, 1997.

\bibitem{MR4150876}
L.~Jeanjean and S.-S. Lu.
\newblock A mass supercritical problem revisited.
\newblock {\em Calc. Var. Partial Differential Equations}, 59(5):Paper No. 174,
  43, 2020.

\bibitem{MR4701352}
L.~Jeanjean, J.~Zhang, and X.~Zhong.
\newblock A global branch approach to normalized solutions for the
  {S}chr\"odinger equation.
\newblock {\em J. Math. Pures Appl. (9)}, 183:44--75, 2024.

\bibitem{MR0969899}
M.~K. Kwong.
\newblock Uniqueness of positive solutions of {$\Delta u-u+u^p=0$} in {${\bf
  R}^n$}.
\newblock {\em Arch. Rational Mech. Anal.}, 105(3):243--266, 1989.

\bibitem{LM_PRSE_23}
S.~Lancelotti and R.~Molle.
\newblock Normalized positive solutions for schrödinger equations with
  potentials in unbounded domains.
\newblock {\em Proceedings of the Royal Society of Edinburgh: Section A
  Mathematics}, 154(5):1518–1551, 2024.

\bibitem{MR0778974}
P.-L. Lions.
\newblock The concentration-compactness principle in the calculus of
  variations. {T}he locally compact case. {II}.
\newblock {\em Ann. Inst. H. Poincar\'{e} Anal. Non Lin\'{e}aire},
  1(4):223--283, 1984.

\bibitem{MRV22JDE}
R.~Molle, G.~Riey, and G.~Verzini.
\newblock Normalized solutions to mass supercritical {S}chr\"{o}dinger
  equations with negative potential.
\newblock {\em J. Differential Equations}, 333:302--331, 2022.

\bibitem{MR3547674}
E.~Serra and L.~Tentarelli.
\newblock On the lack of bound states for certain {NLS} equations on metric
  graphs.
\newblock {\em Nonlinear Anal.}, 145:68--82, 2016.

\bibitem{MR4107073}
N.~Soave.
\newblock Normalized ground states for the {NLS} equation with combined
  nonlinearities.
\newblock {\em J. Differential Equations}, 269(9):6941--6987, 2020.

\bibitem{MR4096725}
N.~Soave.
\newblock Normalized ground states for the {NLS} equation with combined
  nonlinearities: the {S}obolev critical case.
\newblock {\em J. Funct. Anal.}, 279(6):108610, 43, 2020.

\bibitem{ZhZh22NODEA}
Z.~Zhang and Z.~Zhang.
\newblock Normalized solutions of mass subcritical {S}chr\"{o}dinger equations
  in exterior domains.
\newblock {\em NoDEA Nonlinear Differential Equations Appl.}, 29(3):Paper No.
  32, 25, 2022.

\end{thebibliography}
\end{document}